\documentclass[11pt]{amsart}
\usepackage{todonotes}
\usepackage[margin=1in]{geometry}
\usepackage{amsmath,amssymb,amsthm,mathtools}
\usepackage{hyperref}
\usepackage[shortlabels]{enumitem}
\usepackage{xcolor}

\usepackage[no-math]{fontspec}
\usepackage[frozencache=true,cachedir=minted-cache]{minted}

\newmintinline[lean]{lean4}{bgcolor=white}
\newminted[leancode]{lean4}{escapeinside=!!, breaklines, fontsize=\normalsize}
\newtheorem{theorem}{Theorem}[section]
\newtheorem{proposition}[theorem]{Proposition}
\newtheorem{lemma}[theorem]{Lemma}
\newtheorem{corollary}[theorem]{Corollary}
\newtheorem{definition}[theorem]{Definition}
\theoremstyle{remark}
\newtheorem{remark}[theorem]{Remark}

\newcommand{\C}{\mathbb{C}}
\newcommand{\N}{\mathbb{N}}
\newcommand{\abs}[1]{\lvert #1 \rvert}
\newcommand{\norm}[1]{\lVert #1 \rVert}

\newcommand{\dd}{\mathrm{d}}
\newcommand{\ee}{\mathrm{e}}
\newcommand{\ii}{\mathrm{i}}
\newcommand{\rhoFn}{\rho}
\newcommand{\Fd}{\mathcal{F}_d}
\newcommand{\Sphere}{S^{2d-1}}

\title{$L^2$-Stability for STFT phase retrieval}

\author[S.~Bertolini]{Susanna Bertolini}
    \address{Department of Mathematics\\
             ETH Z\"urich, Ramistrasse 101, 8092 Z\"urich, Switzerland.}
    \email{susanna.bertolini@math.ethz.ch}
\author[J.~de Dios Pont]{Jaume de Dios Pont}
    \address{Center for Data Science,
             New York University, 
             New York, New York 10011, USA}
    \email{jdedios@nyu.edu}
\author[B.~Pineau]{Ben Pineau}
    \address{Courant Institute for Mathematical Sciences\\
             New York University} 
     \email{brp305@nyu.edu}
\author[J.~Ramos]{Jo\~ao~P.~G.\ Ramos}
    \address{Instituto de Matem\'atica Pura e Aplicada (IMPA), 
             Rio de Janeiro, RJ, Brazil}
    \email{joao.ramos@impa.br}
\author[M.~Taylor]{Mitchell~A.\ Taylor}
    \address{Department of Mathematics\\
             ETH Z\"urich, Ramistrasse 101, 8092 Z\"urich, Switzerland.}
    \email{mitchell.taylor@math.ethz.ch}

\begin{document}

\begin{abstract}
We prove that the short-time Fourier transform with Gaussian window performs $L^2$-local stable phase retrieval at the constant function. The proof involved significant interplay between mathematicians and LLMs. An autoformalization in Lean 4 of an extension of our result to $L^2$-local stable phase retrieval for all Hermite windows and all elements in the finite span of the canonical basis vectors is also presented.
\end{abstract}

\maketitle

\section{Introduction}
Short-time Fourier transform (STFT) phase retrieval refers to the problem of recovering an unknown function $f\in L^2(\mathbb{R}^d)$ from the phaseless measurement $|V_gf|$, where $V_g:L^2(\mathbb{R}^d)\to L^2(\mathbb{R}^{2d})$ denotes the STFT with window function $g\in L^2(\mathbb{R}^d)$. This problem arises in ptychography, which is a widely used imaging technique that obtains information about an unknown object $f$ by illuminating it with shifts of a probe $g$. 
Due to the high oscillation of the diffracted waves, practitioners can only record the far-field intensity $|V_gf|^2$, which leads to the phase retrieval problem described above.
 \vspace{0.5em}

The mathematics of STFT phase retrieval is a rich subject, with contributions coming from many different communities. Recall that, given a window function $g\in L^2(\mathbb{R}^d)$, the \emph{short-time Fourier transform} $V_g:L^2(\mathbb{R}^d)\to L^2(\mathbb{R}^{2d})$ is defined by
\begin{equation*}\label{STFT Def}
V_g f(x,\omega)=\mathcal{F}(T_x\overline{g}f) (\omega)=\int_{\mathbb{R}^d} f(t)\overline{g(t-x)}e^{-2\pi it\cdot \omega}dt, \ \ \text{for}\ x,\omega\in \mathbb{R}^d.
\end{equation*}
It is easy to see that 
\begin{equation}\label{isometry}
    \|V_g f\|_{L^2(\mathbb{R}^{2d})}=\|g\|_{L^2(\mathbb{R}^{d})} \|f\|_{L^2(\mathbb{R}^{d})}
\end{equation} and the fundamental problem in STFT phase retrieval is to stably reconstruct $f\in L^2(\mathbb{R}^d)$ from the phaseless measurement $|V_g f|$, up to global phase ambiguity.  \vspace{0.5em}

If stability issues are ignored, there is a standard method for determining whether the operator $V_g$ does phase retrieval. Recall that the \emph{ambiguity function} $\mathcal{A}f$  of a square-integrable function $f$ is defined by $\mathcal{A}f(x,\omega):=e^{\pi ix\cdot \omega} V_ff(x,\omega)$. It is well-known that $\mathcal{A}f$ is a continuous function that determines every $f\in L^2(\mathbb{R})$ up to global phase. Moreover, we have the relation 
\begin{equation}\label{Ambiguity to STFT}
    \mathcal{F}\left( |V_g f|^2\right)(\omega,-x)=\mathcal{A}f(x,\omega) \overline{\mathcal{A}g(x,\omega)}.
\end{equation}
For the Gaussian window $g$, it is easy to see that $\mathcal{A}g$ is again a Gaussian, so the above identity implies that $V_g$ does phase retrieval. Similarly, since the ambiguity function of each Hermite window only vanishes on circles, these windows do STFT phase retrieval.
 \vspace{0.5em}

The stability problem for STFT phase retrieval has been studied by many authors \cite{alaifari2019Stable,alaifari2025cheeger,fuhr2025cheeger,MR4709358,Grohs2019Stable,MR4404785,MR5010357}. The most natural notion of local stability is as follows.
\begin{definition}\label{defn of local stability}
    Fix a window function $g\in L^2(\mathbb{R}^d)$. We say that $V_g$ does \emph{local stable phase retrieval at $f\in L^2(\mathbb{R}^d)$} if there exists a constant $C(f)<\infty$ such that for all $h\in L^2(\mathbb{R}^d)$ we have
    \begin{equation}\label{SPR local V_g}
        \inf_{|\lambda|=1}\|f-\lambda h\|_{L^2(\mathbb{R}^d)}\leq C(f) \| |V_g f|-|V_g h|\|_{L^2(\mathbb{R}^{2d})}.
    \end{equation}
\end{definition}
Despite extensive efforts on this problem, we are unaware of a single pair of functions $f,g\in L^2(\mathbb{R}^d)\setminus\{0\}$ for which $V_g$ does local stable phase retrieval at $f$. On the other hand, for \emph{any} window $g\in L^2(\mathbb{R}^d)\setminus\{0\}$, it is known \cite{alaifari2025cheeger,Alharbi2024Locality} that one can construct a dense set of functions $f\in L^2(\mathbb{R}^d)$ for which $C(f)=\infty$. 
 \vspace{0.5em}

The objective of the present article is to show that there are many functions $f,g\in L^2(\mathbb{R}^d)\setminus\{0\}$ for which $V_g$ does local stable phase retrieval at $f$ in the sense of Definition~\ref{defn of local stability}. In fact, when $g$ is the Gaussian or any other Hermite window, there is a  canonical dense set of functions with $C(f)<\infty.$

\subsection{History and preliminaries} Influential results of Cahill-Casazza-Daubechies \cite{MR3554699} and Alaifari and Grohs \cite{MR3656501} show that phase retrieval using frames  cannot be uniformly stable, unless the Hilbert space $\mathcal{H}$ is finite-dimensional. More precisely,  whenever $T:\mathcal{H} \to L^2(\mu)$ is the analysis operator of a frame and $\dim\mathcal{H}=\infty$, we have $\sup_{f\in \mathcal{H}}C(f)=\infty,$ where $C(f)\in [0,\infty]$ is the smallest constant satisfying
\begin{equation*}\label{LSPR intro}
    \inf_{|\lambda|=1}\|f-\lambda g\|_{\mathcal{H}}\leq C(f)\| |Tf|-|Tg|\|_{L^2(\mu)} \ \text{for all} \ g\in \mathcal{H}.
\end{equation*} 
 In fact, it was recently shown in \cite{Alharbi2024Locality} that we have $C(f)=\infty$ for a dense collection of $f\in \mathcal{H}$. In particular, in the natural setting of Definition~\ref{defn of local stability}, instabilities cannot be avoided. 
 \vspace{0.5em}

 In view of the negative results above, the question of \emph{how} instabilities occur became fundamental. Importantly, the instabilities found in the aforementioned works can be viewed as mathematical artifacts rather than inherent issues with the experimental setup. Indeed, they all occur (in some sense) ``at infinity". This intuition has been made precise in the celebrated works of Alaifari-Daubechies-Grohs-Yin \cite{alaifari2019Stable} and Grohs-Rathmair \cite{Grohs2019Stable,MR4404785}, and more recently in \cite{alaifari2025cheeger}. However, all of these papers have the significant drawback that they only apply to the Gaussian window and (for technical reasons) they must replace the natural $L^2$-norm on the right-hand side of \eqref{SPR local V_g} with a weighted first order Sobolev norm.
 \vspace{0.5em}
 
 The lack of robustness when the window is changed is obviously an important issue, as the window is not precisely known in practice. However, the issue of changing the topology is equally problematic, as it makes the forward problem $f\mapsto |V_gf|$ ill-posed. In view of this, it has been an important open problem to understand when stability holds in the natural $L^2$-norm and to what extent one may replace the Gaussian window by a more general window \cite{rathmair2024stable}.

\subsubsection{Reformulating the problem}
Let $g\in L^2(\mathbb{R}^d)$ satisfy $\|g\|_{ L^2(\mathbb{R}^d)}=1$. In view of the identity \eqref{isometry}, Definition~\ref{defn of local stability} can be seen as a special case of the following unified problem by taking $E=V_g(L^2(\mathbb{R}^{d}))\subseteq L^2(\mathbb{R}^{2d})$.

\begin{definition}\label{defn of local stability2}
    Let $E$ be a subspace of $L^2(\Omega,\mu)$ for some measure space $(\Omega,\mu)$. We say that $E$ does \emph{local stable phase retrieval at $f\in E$} if there exists a constant $C(f)<\infty$ such that for all $h\in E$ we have
    \begin{equation*}
        \inf_{|\lambda|=1}\|f-\lambda h\|_{L^2(\mu)}\leq C(f) \| |f|-|h|\|_{L^2(\mu)}.
    \end{equation*}
\end{definition}
In a pioneering work by Calderbank-Daubechies-Freeman-Freeman \cite{calderbank2022stable}, infinite-dimensional subspaces $E$ of real-valued $L^2(\mathbb{R}^d)$ have been constructed that satisfy $\sup_{f\in E}C(f)<\infty$. Similar examples over the complex field were later constructed in \cite{MR4837558} and a general theory of stable phase retrieval in Banach lattices was initiated in \cite{MR4800909}. This has led to a host of new and exciting results \cite{camunez2025characterization,garcia2025isometric,garcia2025existence} that lie in the intersection of various fields. When we state our main results below, we will use the language of Definition~\ref{defn of local stability2} rather than Definition~\ref{defn of local stability}.

\subsubsection{Function spaces} \label{Sec:Fun}

For any measurable function $F \colon \mathbb{C} \to \mathbb{C}$, we define the
\emph{Fock norm}
\[
\lVert F \rVert_{\mathcal{F}}^2
\;:=\lVert F \rVert_{\gamma}^2
\;:=\;
\frac{1}{\pi}\int_{\mathbb{C}} \lvert F(z) \rvert^2 \, \mathrm{e}^{-\lvert z \rvert^2}\,\mathrm{d} m(z).
\]
The \emph{Bargmann--Fock space}~$\mathcal{F}^2(\mathbb{C})$ is the closed subspace of entire functions with
$\lVert F \rVert_{\mathcal{F}} < \infty$.
The monomials $e_n(z) := z^n/\sqrt{n!}$ form an orthonormal basis of $\mathcal{F}^2(\mathbb{C})$, so every
$F(z) = \sum_{n \ge 0} a_n z^n$ satisfies
$\lVert F \rVert_{\mathcal{F}}^2 = \sum_{n \ge 0} \lvert a_n \rvert^2 n!$.
With this convention, the Fock norm is defined for arbitrary measurable
functions --  not only holomorphic ones -- and in particular we have
$\lVert \,\lvert F \rvert\, \rVert_{\mathcal{F}} = \lVert F \rVert_{\mathcal{F}}$.
 \vspace{0.5em}

Up to universal factors, one may identify the Fock space with the image of the short-time Fourier transform with Gaussian window.
Since elements of the Fock space are entire functions, it is obvious that phase retrieval holds in this subspace, i.e., for all $F,G\in \mathcal{F}^2(\mathbb{C})$ we have $|F|=|G|$ if and only if $F=\lambda G$ for some unimodular scalar $\lambda$. However, the question of stability is far more subtle. In fact, it was recently shown \cite{alaifari2025cheeger,Alharbi2024Locality} that there is a dense set of elements of $\mathcal{F}^2(\mathbb{C})$ that \emph{fail} local stable phase retrieval (even with derivative loss). The main goal of this article is to construct a complementary dense set that \emph{does} local stable phase retrieval in the natural $\mathcal{F}$-norm.
 \vspace{0.5em}

Our stability results  apply not only to the Gaussian window, but to any Hermite window. In this case (again up to universal factors), the STFT maps $L^2(\mathbb{R})$ onto the \emph{true polyanalytic Fock space}\footnote{The full polyanalytic Fock space fails phase retrieval whenever $k\geq 1$, since $|\bar{z}|=|z|$. See, for example, \cite{MR3203099} for a discussion on the true polyanalytic Fock space and its connections to time-frequency analysis.} $\mathcal{H}_k(\mathbb{C})$. Here, for any integer $k\geq 0$, we define $\mathcal{H}_k(\mathbb{C})$ as the $\|\cdot\|_\gamma$-closed linear span of the  basis
\begin{equation*}
    \Phi_{k,n}(z)=\frac{1}{\sqrt{k!n!}}\sum_{j=0}^{\min(k,n)}(-1)^j\binom{k}{j}\frac{n!}{(n-j)!}z^{n-j}\overline{z}^{k-j}, \ \ n\geq 0.
\end{equation*}
Clearly, $\mathcal{H}_0(\mathbb{C})=\mathcal{F}^2(\mathbb{C})$ is the Bargmann--Fock space and $\mathcal{H}_k(\mathbb{C})$ is a subspace of polyanalytic functions. By \eqref{Ambiguity to STFT}, $\mathcal{H}_k(\mathbb{C})$  performs phase retrieval. However, the major conceptual difference compared with the case $k=0$ is that the ambiguity functions of the higher Hermite windows have non-trivial zero sets. A priori, this could affect the stability of the phase recovery process, since solving for $\mathcal{A}f$ in \eqref{Ambiguity to STFT} would require division by the ambiguity function of the window.
\subsubsection{Higher dimensional generalizations} The above definitions lift naturally to higher dimensions. Indeed, the $d$-dimensional basis consists of tensor products of the one-dimensional $\Phi_{k,n}$. To make this precise, we fix a dimension $d\geq 1$ and a multi-index $\vec \kappa$. For each multi-index $\vec \alpha$, we define the function
\[
\Phi_{\vec\kappa,\vec\alpha}(\vec z)
:=\prod_{q=0}^{d-1}\Phi_{\vec\kappa_q,\vec\alpha_q}(\vec z_q), \hspace{5mm} \vec z=(z_0,\dots,z_{d-1}).
\]
The space $\mathcal{H}_{\vec\kappa}(\mathbb{C}^d)$ is then defined as the closure of  $\operatorname{span}\{\Phi_{\vec\kappa, \vec\alpha} :\vec \alpha \in \N^d\}$ in the norm
\begin{equation*}
  \|F\|_{\mathcal{F}_d}^2:= \|F\|_{\gamma_d}^2:=\frac{1}{\pi^d}\int_{\mathbb{C}^d}|F(\vec z)|^2  e^{-\lVert \vec z\rVert^2}dm(\vec z).
\end{equation*}

\subsection{Main results}
The main results of this paper concern the $L^2$-local stability of STFT phase retrieval for the canonical window functions at elements of the span of the canonical basis vectors. However, rather than stating one main result, we prefer to state the result in stages, following the path taken by the verification in the Lean 4 formal proof verification software \cite{lean4}. 
 \vspace{0.5em}

The main result for which we will present a detailed proof is the following.
\begin{theorem}[Local SPR at $\mathbf{1}$ in $\mathcal{F}^2(\mathbb{C})$]\label{main:local}
There exists an absolute constant $M> 0$ such
that for any $F\in \mathcal{F}^2(\mathbb{C})$ we have
\[
\inf_{|\lambda|=1}\lVert F - \lambda \rVert_{\mathcal{F}}
\;\le\;
M\,
\bigl\|\, \lvert F \rvert - 1 \,\bigr\|_{\mathcal{F}}.
\]
\end{theorem}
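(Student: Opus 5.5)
Normalize the phase at the origin. Write $F(z)=\sum_{n\ge0}a_nz^n$, set $\lambda:=a_0/|a_0|$ (any unimodular $\lambda$ if $a_0=0$), and put $w:=\bar\lambda F=|a_0|+G$ with $G(0)=0$ and $s:=\|G\|_{\mathcal F}^2$. By orthonormality of the monomials,
\[
\|F-\lambda\|_{\mathcal F}^2=(1-|a_0|)^2+s=:T,\qquad
\varepsilon^2:=\bigl\||F|-1\bigr\|_{\mathcal F}^2=|a_0|^2+s-2\int|F|\,d\gamma+1 .
\]
Here $d\gamma=\pi^{-1}e^{-|z|^2}dm$ is a probability measure. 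Hence $T-\varepsilon^2=2\bigl(\int|F|\,d\gamma-|a_0|\bigr)=:2D$. It therefore suffices to prove
\[
D\le c\,T\quad\text{for some absolute }c<\tfrac12,
\]
since then $T\le(1-2c)^{-1}\varepsilon^2$. Subharmonicity of $|F|$ and radial symmetry of $\gamma$ give $D\ge0$, so the task is an upper bound on $D$: a quantitative strict form of Cauchy--Schwarz for $|F|$ against $\gamma$.

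Since $\operatorname{Re}w$ is harmonic, the mean value property gives $\int\operatorname{Re}w\,d\gamma=|a_0|$. Hence
\[
D=\int\bigl(|w|-\operatorname{Re}w\bigr)\,d\gamma=\int\frac{(\operatorname{Im}w)^2}{|w|+\operatorname{Re}w}\,d\gamma .
\]
Also $\operatorname{Im}w=\operatorname{Im}G$, and $\int G^2\,d\gamma=G(0)^2=0$, so $\int(\operatorname{Im}G)^2\,d\gamma=s/2$. On the set where $|w|+\operatorname{Re}w\ge 2(1-\eta)$ the integrand contributes at most $\frac{s}{4(1-\eta)}$. This is comfortably below $\tfrac12 T$ when $\eta$ is small, and it yields the heuristic constant $c\approx\tfrac14$ seen for $F=1+tz$.

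The plan is a case split on the size of $T$.

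\emph{Large or moderate $T$, or $|a_0|$ far from $1$.} Use the crude bounds $D\le\|F\|_{\mathcal F}-|a_0|=\sqrt{|a_0|^2+s}-|a_0|\le\min\bigl(\sqrt s,\ s/(2|a_0|)\bigr)$. If $|a_0|\ge 1+\delta$, then $D\le s/(2(1+\delta))$, and we are done. If $|a_0|\le1-\delta$, then $T\ge\delta^2$, and we compare $\sqrt s$ with $T\ge\max(\delta^2,s)$. This settles everything except the window $\bigl||a_0|-1\bigr|<\delta$ with $s$ in a bounded range; in that window the crude bound fails only by a constant factor, so a genuine gain is still required.

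\emph{The delicate regime $|a_0|\approx1$.} Split $\mathbb C$ into $A=\{|G|\le\eta\}$ and its complement. On $A$ we have $|w|+\operatorname{Re}w\ge 2(1-\delta-\eta)$, which gives a contribution $\le\frac{s}{4(1-\delta-\eta)}$. On $A^c$, use $|w|-\operatorname{Re}w\le 2|G|$ together with $|w|-\operatorname{Re}w\le(\operatorname{Im}G)^2/|a_0|$-type bounds, and estimate $\int_{A^c}|G|\,d\gamma\le\eta^{-1}\int|G|^2\,d\gamma=s/\eta$. This is too lossy by itself, so here I would exploit holomorphy: for instance, with $G=zH$, Gaussian tail estimates give $\int_{|G|>\eta}|G|\,d\gamma=o(s)$ as $s\to0$ uniformly. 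Alternatively, one can interpolate using $\int|G|^4\,d\gamma\lesssim s^2$, a hypercontractivity/Fock-space $L^4$ bound for functions with $G(0)=0$ (I would try to prove it with constant depending only on $s$ being bounded). The goal is to show that the $A^c$ contribution is $\le\theta s$ with $\theta$ small once $s\le s_0$. When $s\in[s_0,S]$ with $|a_0|\approx1$, I would instead show $D\le cT$ directly from the strict inequality $\int|F|<\|F\|$. This inequality holds because $|F|$ is nonconstant, and it must be made uniform via $\mathrm{Var}_\gamma(|F|)\ge\kappa s$. That variance lower bound, for instance via $\mathrm{Var}(|F|)\ge\mathrm{Var}(\operatorname{Re}w)-\text{(error controlled as above)}$ with $\mathrm{Var}(\operatorname{Re}w)=s/2$, is the step I expect to be the main obstacle. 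Uniformity in this middle range is where explicit Gaussian estimates, not soft compactness, will be needed, because $\mathcal F^2$ is not locally compact.
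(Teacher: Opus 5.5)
Your reformulation is correct and exact. Since $T=\inf_{|\lambda|=1}\lVert F-\lambda\rVert_{\mathcal F}^2$, the theorem is equivalent to $\sup_F D/T<\tfrac12$. The identity $D=\int(\lvert w\rvert-\operatorname{Re}w)\,d\gamma$, together with the equipartition $\int(\operatorname{Im}G)^2\,d\gamma=s/2$, is the same mechanism as the small-amplitude case of Proposition~\ref{prop:local-circle}.

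\textbf{The case analysis around it does not hold up.}

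\emph{The crude bound only encodes the triangle inequality.} One checks $\tfrac12T-(\lVert F\rVert-|a_0|)=\tfrac12(1-\lVert F\rVert)^2$, i.e.\ $\varepsilon\ge|\lVert F\rVert-1|$. It therefore fails on the whole sphere $\lVert F\rVert=1$, not only near $|a_0|=1$. For $F=z$ or $F=\tfrac12+\tfrac{\sqrt3}{2}z$ it gives only $D\le T/2$, so the range $|a_0|\le1-\delta$ is not settled.

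\emph{The middle-range bound is circular.} Applying the theorem to $F/c$ gives $\lVert\,|F|-c\,\rVert_{\mathcal F}\ge M^{-1}\sqrt s$ for every $c>0$, and one can check the converse. So $\operatorname{Var}_\gamma(|F|)\ge\kappa s$ is, up to constants, a restatement of what you are proving.

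\emph{Compactness does work here.} Proposition~\ref{prop:local-reduction} handles every regime with $T$ bounded below using only weak compactness of bounded sets, continuity of point evaluations and phase retrieval at $\mathbf 1$. It does not need local compactness. Everything thus reduces to $T\to0$, i.e.\ $|a_0|\to1$ and $s\to0$.

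\textbf{In the local regime, the key claim is unproved.} Your argument rests on $\int_{\{|G|>\eta\}}(\lvert w\rvert-\operatorname{Re}w)\,d\gamma\le\theta s$ with $\theta$ small, and neither suggested tool gives it.

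\emph{The $L^4$ bound is false.} Take $G=e^{az^2}-1$ with $\tfrac14\le a<\tfrac12$. It lies in $\mathcal F^2(\mathbb C)$ and vanishes at $0$, yet $\int|G|^4\,d\gamma=\infty$; Fock hypercontractivity changes the Gaussian weight.

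\emph{Tail estimates alone cannot work.} The kernel bound $|G(z)|\le\sqrt s\,e^{|z|^2/2}$ only places $\{|G|>\eta\}$ inside $\{|z|^2\ge\log(\eta^2/s)\}$. Take $G=\sqrt s\,z^n/\sqrt{n!}$ with $n\approx\log(\eta^2/s)$. About half of the mass lies in $\{|G|>\eta\}$, and a direct incomplete-Gamma computation gives $\int_{\{|G|>\eta\}}|G|\,d\gamma\approx 2s/(\eta\sqrt{2\pi n})$. The gain over $s/\eta$ is only logarithmic. It comes from the fact that at radius $r$, $|G|$ varies on radial scale $1/r$ while the Gaussian mass spreads on scale $1$.

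Proving something of this kind for arbitrary $G$ with many interacting frequencies is the real content of the theorem, and your proposal has no mechanism for it. The paper supplies one in three steps:
\begin{enumerate}
\item It splits the frequencies into blocks localized on annuli, with Gaussian leakage (Section~\ref{subsec:blocks}).
\item It handles the region where $|G|$ is not small with the high-frequency estimate, Proposition~\ref{prop:high-freq}. This estimate does not show that region is negligible; it uses rotation averaging (Proposition~\ref{prop:rot-avg}) to show $\rho$ captures a fixed fraction of the mass there.
\item This yields orthogonal coercivity (Theorem~\ref{thm:orthog}) for all $U$ with $U(0)=0$, and the local statement then follows via Theorem~\ref{thm:reduction}.
\end{enumerate}
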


Theorem~\ref{main:local} follows from Theorem~\ref{thm:local}, a standard density argument, and a general fact about reproducing kernel Hilbert spaces that we will discuss below. 
\begin{theorem}[Local-local SPR at $\mathbf{1}$ in $\mathcal{F}^2(\mathbb{C})$]\label{thm:local}
There exist absolute constants $\delta > 0$ and $\widetilde{M} > 0$ such
that the following holds.  Let $P$ be a polynomial with
$\lVert P \rVert_{\mathcal{F}} \le \delta$.  Then there exists $w \in \mathbb{C}$ with $\lvert w \rvert = 1$
such that
\[
\lVert w(1+P) - 1 \rVert_{\mathcal{F}}
\;\le\;
\widetilde{M}\,
\bigl\|\, \lvert 1+P \rvert - 1 \,\bigr\|_{\mathcal{F}}.
\]
One may take $\delta = 1/4601$ and $\widetilde{M} = 23003$.
\end{theorem}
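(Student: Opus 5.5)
The plan is to linearize $\lvert 1+P\rvert-1$ around $\operatorname{Re}P$. We exploit an exact orthogonality identity for $\operatorname{Re}P$ in the Fock norm, and choose the phase $w$ so that the constant term of the perturbation becomes real.

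\emph{Step 1 (choice of $w$ and the linear identity).} Write $P(z)=\sum_{n\ge0}a_nz^n$. Since $\lvert a_0\rvert\le\lVert P\rVert_{\mathcal F}\le\delta$, we have $1+a_0\neq0$, and we set $w:=\overline{(1+a_0)}/\lvert 1+a_0\rvert$. Then $w(1+P)=1+P'$ with $P'(0)=\lvert 1+a_0\rvert-1\in\mathbb R$. Moreover $\lvert 1+P'\rvert=\lvert 1+P\rvert$, $\lVert P'\rVert_{\mathcal F}\le 3\delta$, and $w(1+P)-1=P'$. The key linear fact is that for any polynomial $Q$, the mean value property gives $\langle Q,\overline Q\rangle_{\mathcal F}=\frac1\pi\int Q^2e^{-\lvert z\rvert^2}\,dm=Q(0)^2$. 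Hence
\[
\lVert \operatorname{Re}Q\rVert_{\mathcal F}^2=\tfrac12\lVert Q\rVert_{\mathcal F}^2+\tfrac12\operatorname{Re}\bigl(Q(0)^2\bigr).
\]
For $Q=P'$ the constant term is real, so $\lVert \operatorname{Re}P'\rVert_{\mathcal F}\ge \tfrac1{\sqrt2}\lVert P'\rVert_{\mathcal F}$. Thus it suffices to show
\[
\bigl\lVert\,\lvert 1+P'\rvert-1-\operatorname{Re}P'\bigr\rVert_{\mathcal F}\le c\,\lVert P'\rVert_{\mathcal F}
\]
for some $c<1/\sqrt2$, or at least that such an error can be absorbed.

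\emph{Step 2 (the nonlinear error, localized).} We use the exact formula
\[
\lvert 1+P'\rvert-1=\frac{2\operatorname{Re}P'+\lvert P'\rvert^2}{\lvert 1+P'\rvert+1}.
\]
Pointwise, the difference from $\operatorname{Re}P'$ is $O(\lvert P'\rvert^2)$ where $\lvert P'\rvert\le\eta$ is small. We also have the reproducing kernel bound $\lvert P'(z)\rvert\le\lVert P'\rVert_{\mathcal F}e^{\lvert z\rvert^2/2}\le 3\delta e^{\lvert z\rvert^2/2}$. So on the disk $D_R=\{\lvert z\rvert\le R\}$ with $3\delta e^{R^2/2}\le\eta$, the error is at most $C\eta\lvert P'\rvert$. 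Its weighted $L^2$ contribution is therefore $\le C\eta\lVert P'\rVert_{\mathcal F}$, which is absorbable for small $\eta$.

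\emph{Step 3 (the exterior region — main obstacle).} The difficulty is $\lvert z\rvert>R$. There $P'$ need not be small pointwise, and $\lVert P'^2\rVert_{\mathcal F}$ is \emph{not} controlled by $\lVert P'\rVert_{\mathcal F}^2$ in Fock space, so a naive quadratic Taylor estimate fails. The plan is to avoid linearizing there and to use the robust pointwise bounds
\[
\bigl\lvert\,\lvert 1+P'\rvert-1\bigr\rvert\le\lvert P'\rvert,
\qquad
\lvert 1+P'\rvert-1\ge \operatorname{Re}P'\ \text{ when } \operatorname{Re}P'\ge-1.
\]
We then show that the Fock mass of $P'$ outside $D_R$ is dominated by the left-hand side $\lVert\,\lvert1+P'\rvert-1\rVert_{\mathcal F}$ plus a small multiple of $\lVert P'\rVert_{\mathcal F}$. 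Concretely, we split $\{\lvert z\rvert>R\}$ according to whether $\lvert P'\rvert\le\eta$ or $\lvert P'\rvert>\eta$. On the first set, the Step~2 estimate still applies. On the second set, we aim to show that $\bigl\lvert\,\lvert1+P'\rvert-1\bigr\rvert\gtrsim\eta$ except on a set where $1+P'$ is near the unit circle but far from $1$. That exceptional set is where a genuine loss could occur. We would control it using the a priori smallness $\lVert P'\rVert_{\mathcal F}\le3\delta$ together with subharmonicity of $\lvert P'\rvert^2$ (via sub-mean-value estimates on discs of radius $\sim1/R$), which converts pointwise largeness into a lower bound on Gaussian-weighted mass nearby.

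\emph{Step 4 (conclusion).} Combining the interior, the small exterior part, and the large exterior part gives
\[
\lVert P'\rVert_{\mathcal F}\le\sqrt2\,\lVert\operatorname{Re}P'\rVert_{\mathcal F}\le\sqrt2\bigl(\lVert\,\lvert1+P\rvert-1\rVert_{\mathcal F}+C\eta\lVert P'\rVert_{\mathcal F}+\text{exterior terms}\bigr).
\]
Absorbing $C\eta\lVert P'\rVert_{\mathcal F}$ and choosing $\eta$, $R$, and then $\delta$ appropriately yields the claim with explicit constants of the stated size.
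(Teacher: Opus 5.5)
Your Step~1 is correct. The identity $\lVert\operatorname{Re}Q\rVert_{\mathcal F}^2=\tfrac12\lVert Q\rVert_{\mathcal F}^2+\tfrac12\operatorname{Re}(Q(0)^2)$ is the Fock-space form of the equipartition \eqref{eq:equal-mass} behind the paper's small-amplitude circle estimate. The interior bound of Step~2 is also fine. In fact $\lVert P'\rVert_{\mathcal F}\le\lVert P\rVert_{\mathcal F}$, since $P'-P'(0)=w(P-a_0)$ and $\lvert\lvert 1+a_0\rvert-1\rvert\le\lvert a_0\rvert$.

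\textbf{The gap is Step~3, and it is not a correction term: it is the whole theorem.} The bound $\lvert P'(z)\rvert\le 3\delta e^{\lvert z\rvert^2/2}$ gives no smallness beyond a fixed radius $R=O(\sqrt{\log(\eta/\delta)})$. A degree-$N$ polynomial, however, carries its Gaussian mass near $\lvert z\rvert\approx\sqrt N$. Since the estimate must be uniform in the degree, essentially all of $\lVert P'\rVert_{\mathcal F}$ may lie in the exterior, where $P'$ can be of order one. What you need there is
\[
\lVert P'\mathbf 1_{\{\lvert z\rvert>R\}}\rVert_{\mathcal F}\le C\lVert\rho(P')\rVert_{\mathcal F}+\epsilon\lVert P'\rVert_{\mathcal F}.
\]
This is itself a coercivity estimate for perturbations of order-one size, i.e.\ essentially the content of Theorem~\ref{thm:orthog}, which holds with no smallness assumption.

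\textbf{Your proposed tool cannot deliver this.} The obstruction is the region where $1+P'$ hugs the circle $\{\lvert 1+w\rvert=1\}$ away from $0$. For example, if $P'\approx e^{\ii\phi}-1$ with $\phi$ not small, then $\rho(P')\approx 0$ while $\lvert P'\rvert\sim 1$. Sub-mean-value inequalities for $\lvert P'\rvert^2$ only turn pointwise size of $\lvert P'\rvert$ into nearby mass of $\lvert P'\rvert$. They say nothing about $\rho(P')$, so they cannot tell this configuration apart from a harmless one. As written, the ``exterior terms'' in Step~4 are therefore uncontrolled, and there is no route to the stated constants.

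\textbf{What is missing} is a quantitative reason why $P'$ cannot spend much of its mass near that circle. The paper gets this in polar coordinates. On $\lvert z\rvert=r$, a perturbation orthogonal to $1$ restricts to a trigonometric polynomial with positive frequencies.
\begin{enumerate}[(i)]
\item The identity $\int_{S^1}\lvert1+P\rvert^2\,\dd\sigma=1+\alpha^2$ forbids $\lvert 1+P\rvert\approx1$ when $\alpha$ is not small, and equipartition handles small $\alpha$. Together these give Proposition~\ref{prop:local-circle}, at the cost of a $\sqrt L$ loss.
\item For a narrow band of high frequencies, $\ee^{\ii Nt}$ sweeps each short arc through a full circle of radius $\lvert q_k\rvert$ about $0$. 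By Proposition~\ref{prop:rot-avg}, that circle must pass through the region where $\rho\ge r/\sqrt2$, which gives Proposition~\ref{prop:high-freq}.
\end{enumerate}
Frequency localization into the blocks $I_\ell$, with the Gaussian leakage absorbed, makes (i) or (ii) available on every annulus. This yields $\lVert U\rVert_{\mathcal F}\le 4600\lVert\rho(U)\rVert_{\mathcal F}$ for all $U\perp1$. Theorem~\ref{thm:reduction} with $M=4600$ then gives $\delta=1/4601$ and $\widetilde M=23003$.
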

We remark that the assumption that $P$ is a polynomial in Theorem~\ref{thm:local} is for convenience, as a standard density argument allows one to pass from a polynomial $P$ to a general element $F\in \mathcal{F}^2(\mathbb{C})$. The reduction from Theorem~\ref{thm:local} to Theorem~\ref{main:local} is a general RKHS fact that does not rely on any further structure of the Fock space. We refer the reader to Section~\ref{subsec:local-reduction} for this general result.
 \vspace{0.5em}

Theorem~\ref{main:local} can be vastly generalized using similar methods. Since the proof becomes much more technical but not more instructive, we will only sketch the main changes and provide a link to the full Lean verification. Our most general result establishes local stability for every Hermite window at every element in the linear span of the canonical basis vectors.
\begin{theorem}[General stability for Hermite phase retrieval]\label{thm:local_Gen}
Fix $d\geq 1$ and a multi-index $\vec\kappa$. Let $P_0\in \text{span} \{\Phi_{\vec \kappa,\vec\alpha}\}_{\vec \alpha}$ be an element of the finite linear span of the canonical basis vectors of $\mathcal{H}_{\vec\kappa}(\mathbb{C}^d)$. There exists a constant $M=M(d,\vec\kappa,P_0)>0$ such
that for any $F\in \mathcal{H}_{\vec\kappa}(\mathbb{C}^d)$ we have
\[
\inf_{|\lambda|=1}\lVert F - \lambda P_0 \rVert_{\gamma_d}
\;\le\;
M\,
\bigl\|\, \lvert F  \rvert - |P_0| \,\bigr\|_{\gamma_d}.
\]
\end{theorem}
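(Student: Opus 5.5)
The plan is to follow the same two-stage structure as for Theorem~\ref{main:local}. First prove a local-local statement: there are $\delta,\widetilde M>0$, depending on $d,\vec\kappa,P_0$, such that for every $Q$ in the finite span of the $\Phi_{\vec\kappa,\vec\alpha}$ with $\|Q\|_{\gamma_d}\le\delta$ there is a unimodular $w$ with
\[
\|w(P_0+Q)-P_0\|_{\gamma_d}\le \widetilde M\,\bigl\|\,|P_0+Q|-|P_0|\,\bigr\|_{\gamma_d}.
\]
Then pass to general $F$ by density, and to large perturbations by the general RKHS reduction of Section~\ref{subsec:local-reduction}. For the global step, note that $\inf_\lambda\|F-\lambda P_0\|\le \|F\|+\|P_0\|\le 2\|P_0\|+\bigl\|\,|F|-|P_0|\,\bigr\|$. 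So it suffices to handle the regime where $\bigl\|\,|F|-|P_0|\,\bigr\|$ is small. That regime forces $F$ to be close to $\lambda P_0$: this is a qualitative compactness or phase-retrieval-uniqueness statement, obtained from pointwise evaluation bounds in the RKHS together with the injectivity of $F\mapsto|F|$ up to phase given by \eqref{Ambiguity to STFT}. Once $F$ is close to $\lambda P_0$, the local-local estimate applies.

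For the local-local estimate, write $|P_0+Q|^2-|P_0|^2=2\,\mathrm{Re}(\overline{P_0}Q)+|Q|^2$. Since $\bigl|\,|P_0+Q|-|P_0|\,\bigr|\ge \bigl|\,|P_0+Q|^2-|P_0|^2\,\bigr|/(2|P_0|+|Q|)$, it is enough to control $\|\mathrm{Re}(\overline{P_0}Q)\|$ in a Gaussian weighted norm with the polynomial weight $(1+|z|)^{-N}$. The loss of a polynomial weight is harmless if it is recovered in the next step.

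\textbf{Key linear step.} The map $Q\mapsto \mathrm{Re}(\overline{P_0}Q)$ has kernel exactly $\{\ii tP_0: t\in\mathbb R\}$ on the space $\mathcal H_{\vec\kappa}$. The reason is that if $\overline{P_0}Q$ is purely imaginary, then $Q/P_0$ is imaginary on the open set $\{P_0\ne0\}$, and the polyanalytic structure forces $Q=\ii tP_0$. The required quantitative statement is a lower bound
\[
\|\mathrm{Re}(\overline{P_0}Q)\|_{w}\ge c\,\|Q-\ii tP_0\|_{\gamma_d}
\]
with the optimal $t$. I would prove it by expanding in the orthogonal basis $\Phi_{\vec\kappa,\vec\alpha}$. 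Since $P_0$ is a finite combination, multiplying by $\overline{P_0}$ shifts indices by a bounded amount. The resulting operator is therefore a finite-band perturbation of a weighted shift, and its Gaussian-weighted Gram structure can be bounded below coefficient-by-coefficient, uniformly for large $|\vec\alpha|$. For the Fock case this reduces to comparing $\sum|a_n|^2n!$ with norms of $\mathrm{Re}(\bar z^m z^n)$-type terms. Choosing $w$ to kill the $\ii tP_0$ component, the quadratic term $|Q|^2$ is absorbed because $\|Q\|\le\delta$.

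\textbf{Main obstacle.} I expect the hardest step to be this uniform coercivity of $Q\mapsto\mathrm{Re}(\overline{P_0}Q)$ modulo $\ii\mathbb R P_0$. Three difficulties combine there: the zero set of $P_0$, which is already present for $k\ge1$ since $\Phi_{k,0}$ vanishes on circles; the high-frequency tail of $Q$; and the polynomial weight lost when dividing by $2|P_0|+|Q|$. My first attempt would treat large degrees by a direct almost-orthogonality computation, where the weight loss is dominated by Gaussian decay of the cross terms, and treat the finitely many low modes by finite-dimensional compactness. Making the constant explicit and uniform is the part that would need the most care, and the tensor structure reduces the $d$-dimensional case to products of one-dimensional estimates.
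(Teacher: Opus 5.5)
Your global step (the trivial bound when $\bigl\|\,|F|-|P_0|\,\bigr\|$ is large, compactness plus phase-retrieval uniqueness when it is small) is sound and is essentially Proposition~\ref{prop:local-reduction}. The gap is in the local-local step, which linearizes $|P_0+Q|-|P_0|$ around $P_0$. In $L^2(\gamma_d)$ this is not available, because $\|Q\|_{\gamma_d}\le\delta$ gives no pointwise smallness.

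Take $d=1$, $\vec\kappa=0$, $P_0=1$ and $Q=\delta z^n/\sqrt{n!}$. Then $\|Q\|_\gamma=\delta$. Yet $|Q|\approx\delta\,\mathrm{e}^{n/2}(2\pi n)^{-1/4}$ on the annulus $|z|\approx\sqrt n$ that carries its mass, and $\bigl\|\,|Q|^2\bigr\|_\gamma=\delta^2\binom{2n}{n}^{1/2}\sim\delta^2 2^n(\pi n)^{-1/4}$. This has three consequences:
\begin{itemize}
\item ``$|Q|^2$ is absorbed because $\|Q\|\le\delta$'' fails for every fixed $\delta$.
\item The divisor $2|P_0|+|Q|$ is not a polynomial weight uniformly over the finite span, and no polynomial weight offsets an exponential loss. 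On the region $|Q|\gg|P_0|$, where most of the mass sits, $\rho$ is comparable to $|Q|$ and says nothing about $\mathrm{Re}(\overline{P_0}Q)$.
\item Your key linear inequality is false as stated. For $Q=z^n/\sqrt{n!}$ and $w=(1+|z|)^{-N}$, $\|\mathrm{Re}\,Q\|_w$ is of order $n^{-N/2}\to0$, while $\|Q-\mathrm{i}tP_0\|_\gamma\ge1$ for every $t$. So the lost weight cannot be ``recovered'' with a uniform $c$.
\end{itemize}
Separately, the paper stresses that the $d$-dimensional result is not a tensorization of one-dimensional estimates: general elements of $\mathcal{H}_{\vec\kappa}(\mathbb{C}^d)$ are not tensor products, and $\rho$ is nonlinear.

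What is missing is a coercivity bound valid at all amplitudes, namely $\|U\|\le C\|\rho(U)\|$ for every $U$ orthogonal to the base point, not only small $U$. The only quadratic term the paper absorbs is the scalar one in Theorem~\ref{thm:reduction}, where $\int|h|^2\,\mathrm{d}\mu=\|h\|^2$ holds exactly. The paper proves this coercivity radius by radius in polar coordinates:
\begin{itemize}
\item \emph{Local circle estimate.} A trigonometric polynomial with $L$ positive frequencies satisfies $\|P\|_\infty\le\sqrt L\,\|P\|_{L^2}$. So linearization, using the equipartition of real and imaginary mass from $\int P^2\,\mathrm{d}\sigma=0$, is used only when $\|P\|_{L^2}\le 1/(4\sqrt L)$. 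Larger amplitudes are handled by the identity $\int(|1+P|^2-1)\,\mathrm{d}\sigma=\|P\|^2$ (Proposition~\ref{prop:local-circle}).
\item \emph{High-frequency estimate.} For a band whose lowest frequency dominates its width, the rotation-averaging bound $\int\rho(r\mathrm{e}^{\mathrm{i}\theta})^2\,\mathrm{d}\sigma\ge r^2/8$, valid for every $r$, gives an $L$-independent constant (Propositions~\ref{prop:rot-avg} and~\ref{prop:high-freq}).
\item \emph{Frequency localization.} Since $z^n$ concentrates near $|z|\approx\sqrt n$, on the annulus $j\le|z|<j+1$ only the blocks with $|\ell-j|\le5$ matter. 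These have boundedly many frequencies for $j\le817$ and form such a band for $j\ge818$. The Gaussian leakage from distant blocks is then absorbed.
\end{itemize}
For the Hermite windows one factors out the basis vector on each circle to return to positive frequencies and redoes the mass-concentration bounds. For $d>1$ the circle estimates are lifted to $S^{2d-1}$.
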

\begin{remark}
    Theorem~\ref{thm:local_Gen} has been autoformalized in Lean 4; we refer the reader to Sections~\ref{LLM Sect} and \ref{App} for a discussion of the Lean formalization and to \cite{angdinata2026abc,armstrong2026formalization,hariharan2026milestone,ho2026generalization,ilin2026semi} for recent examples of formalizations involving AI. 
\end{remark}
\begin{remark}
     We emphasize that the assumption that $P_0\in\text{span}\{\Phi_{\vec \kappa,\vec\alpha}\}_{\vec \alpha}$ is absolutely essential, as there is a dense set of $F_0\in\mathcal{H}_{\vec\kappa}(\mathbb{C}^d)$ for which 
    Theorem~\ref{thm:local_Gen} is false. We also remark that although each basis
element of $\text{span}\{\Phi_{\vec \kappa,\vec\alpha}\}_{\vec \alpha}$ factors as a tensor product, the proof of Theorem~\ref{thm:local_Gen} is not obtained by a formal tensorization of the one-dimensional statements. 
\end{remark}
As a direct consequence of Theorem \ref{main:local}, we are able to prove a \emph{sharp stability version} of the log-Sobolev inequality for entire functions. More specifically, if $F \in \mathcal{F}^2(\mathbb{C})$ with $\|F\|_{\gamma}= 1$, the logarithmic Sobolev inequality for entire functions states that 
\[
 \int_{\mathbb{C}} |F'(z)|^2 \, d\gamma(z)
  \;\ge\;
  \int_{\mathbb{C}} |F(z)|^2 \ln |F(z)|^2 \, d\gamma(z),
\]
where $d\gamma$ denotes the Gaussian measure. This estimate is directly tied to the classical logarithmic Sobolev inequality (see, e.g., \cite{Gross1975,Stam1959,Weissler1978} for the first works dealing with such an estimate), and its stability has been a recent subject of interest. Indeed, we first highlight \cite{DolbeaultEstebanFigalliFrankLoss2025}, where the authors prove asymptotically sharp stability estimates for the Sobolev inequality, which, by a limiting argument, yields stability estimates for the classical log-Sobolev inequality. More recently, R.~Frank, F.~Nicola and P.~Tilli \cite{FrankNicolaTilli2025} showed that the same estimate holds also in the aforementioned \emph{entire} case, obtaining the following result. 
\begin{theorem}[Frank--Nicola--Tilli, {\cite[Theorem~14]{FrankNicolaTilli2025}}]\label{thm:FNTLogSob}
There exists a constant $c_\ast > 0$ such that, for every
$F \in \mathcal{F}^2(\mathbb{C})$ with $\|F\|_{\gamma} = 1$, we have
\begin{equation*}
  \int_{\mathbb{C}} |F'(z)|^2 \, d\gamma(z)
  \;\ge\;
  \int_{\mathbb{C}} |F(z)|^2 \ln |F(z)|^2 \, d\gamma(z)
  \;+\;
  c_\ast \!\!\inf_{\substack{\beta \in \mathbb{C} \\ |c|=1}}\!
  \bigl\| F - c\, e^{\beta z - |\beta|^2/2} \bigr\|_{\gamma}^{2}.
\end{equation*}

\end{theorem}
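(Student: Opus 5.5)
The plan is to split the argument into a \emph{global-to-local} reduction and a \emph{local} second-order analysis near the extremals, using the symmetries of the problem throughout. Both sides of the inequality are invariant under the unitary Weyl--Heisenberg action $F\mapsto c\,F(z-\beta)\,\ee^{\bar\beta z-\abs{\beta}^2/2}$ on $\mathcal{F}^2(\C)$. Concretely, $\int\abs{F}^2\ln\abs{F}^2\,\dd\gamma$ is preserved because $\abs{\text{new }F}^2\dd\gamma$ is the push-forward of $\abs{F}^2\dd\gamma$ under a translation. The energy $\int\abs{F'}^2\dd\gamma$ changes only by terms that cancel against the change of the entropy; I would verify this using $\int\abs{F'}^2\dd\gamma=\sum_n n\abs{a_n}^2$. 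The extremals $c\,\ee^{\beta z-\abs{\beta}^2/2}$ form exactly the orbit of $\mathbf 1$. So, writing $\delta(F)$ for the deficit (energy minus entropy), it suffices to show $\delta(F)\ge c_\ast\operatorname{dist}(F,\text{orbit})^2$.

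\emph{Step 1 (qualitative stability).} Show that if $\norm{F_j}_\gamma=1$ and $\delta(F_j)\to 0$, then after applying suitable symmetries $F_j\to\mathbf 1$ in $\mathcal{F}^2(\C)$. I would do this by concentration-compactness on the probability measures $\abs{F_j}^2\dd\gamma$. First, the energy bound gives tightness after recentring by the barycentre; choose $\beta_j$ so that the recentred function has vanishing first Taylor coefficient. Second, dichotomy is excluded by strict subadditivity of the log-Sobolev functional. The limit is then an optimizer, hence $c\,\ee^{\beta z-\abs\beta^2/2}$, and the normalization forces it to be $\mathbf 1$ up to phase. If tightness were delicate, I would instead use Theorem~\ref{main:local}: a small deficit should force $\abs{F}$ close to $1$ in $L^2(\gamma)$ after recentring, and Theorem~\ref{main:local} then upgrades this to closeness of $F$ itself to a unimodular constant.

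\emph{Step 2 (local coercivity).} Near $\mathbf 1$, write $F=c(1+h)/\norm{1+h}_\gamma$ with $h=\sum_{n\ge1}a_nz^n$. By the choice of $\beta$ in Step 1 we may assume $a_1=0$ to first order, and the phase is absorbed into $c$. Expanding $(1+u)\ln(1+u)=u+\tfrac{u^2}{2}+O(\abs u^3)$ with $u=2\operatorname{Re}h+\abs h^2$, the key simplification is
\[
\int(\operatorname{Re}h)^2\,\dd\gamma=\tfrac12\norm{h}_\gamma^2 ,
\]
which holds because $h^2$ is holomorphic with $h^2(0)=0$, so $\int h^2\,\dd\gamma=0$. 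Combining this with the normalization yields a quadratic form diagonal in the monomial basis, of the form $\sum_{n\ge2}(n-\mu)\,n!\,\abs{a_n}^2$ for an explicit $\mu$. This gives a spectral gap on $\{a_0=a_1=0\}$, so $\delta(F)\gtrsim\norm h_\gamma^2$ up to cubic errors.

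\emph{Main obstacle.} The hardest step is controlling the cubic remainder in $L^2(\gamma)$-type norms only. Since $h$ may be unbounded, $\abs u^3$ is not controlled by $\norm h_\gamma^3$. I would handle this by splitting $\C$ into the region $\{\abs h\le\eta\}$, where Taylor expansion is valid, and its complement. On the complement I would use the elementary convexity lower bound $(1+u)\ln(1+u)-u\ge c\min(u^2,\abs u)$, together with hypercontractivity of the Fock space: the Carlen/Janson bound $\norm h_{L^4(\gamma)}\lesssim\norm{h(\sqrt2\,\cdot)}_\gamma$-type estimates would control the bad set. Combining Steps 1 and 2 by a standard contradiction argument then gives the global constant $c_\ast$.
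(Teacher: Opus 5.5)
\textbf{Genuine gap: the cubic remainder in Step 2.} Your architecture (symmetry reduction, compactness, second variation at $\mathbf 1$, contradiction) is a legitimate Bianchi--Egnell scheme, and the quadratic computation is right. With $a_1=0$ the second variation is $\sum_{n\ge2}(n-1)\,n!\,\abs{a_n}^2\ge\norm{h}_\gamma^2$, by the same equipartition $\int(\operatorname{Re}h)^2\,\dd\gamma=\frac12\norm{h}_\gamma^2$ that drives the paper's circle estimate. The argument breaks at the remainder, which you identify but do not control.

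First, a lower bound on the deficit needs an \emph{upper} bound on $\int(1+u)\ln(1+u)\,\dd\gamma$. The convexity estimate $(1+u)\ln(1+u)-u\ge c\min(u^2,\abs{u})$ is a lower bound, so it points the wrong way.

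Second, and more seriously, the non-quadratic part of the entropy is not $o(\norm{h}_\gamma^2)$ on any $\mathcal F$-neighbourhood of $\mathbf 1$. Take $h=\epsilon\,(G_\beta-G_\beta(0))$ with $G_\beta(z)=\ee^{\beta z-\abs{\beta}^2/2}$ and $\abs{\beta}^2=2\ln(1/\epsilon)$; the first coefficient $\epsilon^2\beta$ can be removed at negligible cost. Then $\norm{h}_\gamma\approx\epsilon$ and $\abs{h}^2\,\dd\gamma\approx\epsilon^2\pi^{-1}\ee^{-\abs{z-\bar\beta}^2}\dd m$. Moreover $\abs{h}\approx\ee^{\operatorname{Re}(\beta w)}$ with $w=z-\bar\beta$, so $\int_{\{\abs{h}\ge1\}}\abs{h}^2\ln\abs{h}^2\,\dd\gamma\approx\epsilon^2\abs{\beta}/\sqrt{\pi}\gg\norm{h}_\gamma^2$. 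The inequality survives only because $\norm{h'}_\gamma^2=\epsilon^2\abs{\beta}^2$.

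Hence the remainder must be absorbed by the weight $n-1$, not by $\norm{h}_\gamma^2$. The $L^4$ hypercontractive bound you quote costs $\norm{h(\sqrt2\,\cdot)}_\gamma^2=\sum_n2^n\,n!\,\abs{a_n}^2$, which neither $\norm{h}_\gamma^2$ nor the energy controls. As written, ``$\delta(F)\gtrsim\norm{h}_\gamma^2$ up to cubic errors'' does not close.

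\textbf{Step 1 is also only a plan.} Without recentring, $\delta(F_j)\to0$ is compatible with unbounded energy (e.g.\ $F_j=G_{\beta_j}$). You do not show that recentring restores an energy bound, and there is no off-the-shelf strict subadditivity principle for this functional. A smaller slip: $\int\abs{F}^2\ln\abs{F}^2\,\dd\gamma$ is not shift invariant, since relative entropy with respect to $\gamma$ is not translation invariant; only the deficit is.

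\textbf{How the paper closes it.} Your fallback is the right idea. Made quantitative, it is the paper's whole proof and makes Step~2 unnecessary.
\begin{enumerate}
\item Since $\abs{\nabla\abs{F}}=\abs{F'}$ a.e., $\delta(F)$ is exactly the Gaussian log-Sobolev deficit of the real function $\abs{F}$.
\item The quantitative Gaussian result of Dolbeault--Esteban--Figalli--Frank--Loss \cite[Corollary~1.2]{DolbeaultEstebanFigalliFrankLoss2025}, after optimizing the amplitude $a$, gives $\delta(F)\ge\frac{\beta_\star}{4}\inf_\beta\norm{\abs{F}-\abs{G_\beta}}_{\mathcal F}^2$.
\item The Bargmann shift $H=W_{-\beta}F$ satisfies $\norm{\abs{H}-1}_{\mathcal F}=\norm{\abs{F}-\abs{G_\beta}}_{\mathcal F}$ and $\norm{H-c}_{\mathcal F}=\norm{F-cG_\beta}_{\mathcal F}$. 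Theorem~\ref{main:local} then finishes with $c_\ast=\beta_\star/(4M^2)$.
\end{enumerate}
The unbounded-remainder problem you ran into is thereby handled entirely on the real side by that stability theorem, with no compactness and no second variation.
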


As we shall see below, Theorem \ref{thm:FNTLogSob} follows almost directly from Theorem~\ref{main:local} and \cite[Corollary~1.2]{DolbeaultEstebanFigalliFrankLoss2025}.

\subsection{Explanation of LLM usage and Lean}\label{LLM Sect}
Here we explain the events that led to the results in this paper.
 \vspace{0.5em}

\textbf{LLM Usage.} The problem under consideration had previously been studied by several of the authors over the past years, with varying degrees of success. On 28 March 2026, the authors were experimenting with ChatGPT Plus (GPT 5.4) and, in a multi-turn conversation, became intrigued by a (false) reduction of the problem to a circle estimate (which one may view as a primitive version of the results in Section~\ref{subsec:circle}). The authors then tried to understand this reduction, further prompting GPT until they realized that a modification of the proposed argument could be salvaged and would likely lead to a proof. We then sketched the full proof of Theorem~\ref{thm:orthog} by ourselves, which naively lost a $\log$-factor in the stability estimate, and asked GPT 5.4 in Codex to refine it. This ultimately resulted in a complete proof of Theorem~\ref{thm:orthog}, which we knew implied Theorem~\ref{main:local}. We then autoformalized the polynomial version of this result in Lean 4 using Claude Opus 4.6. 
 \vspace{0.5em}

In hindsight, the fact that the structure of the problem is more easily visible in polar coordinates should have been evident. In fact, spiral sampling is commonly used in experimental applications. Moreover, the final proof is not particularly difficult, and once the structure was made clear, the authors could have completed the proof of  Theorem~\ref{main:local} without any additional assistance. What is interesting, however, is that the LLMs were able to incrementally generalize Theorem~\ref{main:local} to Theorem~\ref{thm:local_Gen} with very little human mathematical input. Of course, it should be emphasized that the authors strongly believed that the method would generalize. Moreover, as we will see below, significant challenges arose on the Lean side when verifying the full result.
 \vspace{0.5em}

As alluded to above, several of the intermediate results in this paper were known to the authors for a long time. For example, the ``local-local to local" reduction in Proposition~\ref{prop:local-reduction} is relatively standard. The orthogonal reduction in Theorem~\ref{thm:reduction}  is also just a natural variant of \cite[Theorem 3.10]{MR4800909} and was not produced by an LLM.
 \vspace{0.5em}

\textbf{Lean.} The main formal content of the results in this paper has been verified in Lean 4 \cite{lean4} using the Mathlib library \cite{mathlib2020}.  Here, we point out some interesting features of the formalization that may be useful for other problems. We refer the reader to Section~\ref{App} for a formalized statement of our main theorem.
 \vspace{0.5em}

In the beginning, we carried out the Lean verification only for finite linear combinations of the basis elements. Our justification for this was threefold. First, the density argument is completely standard -- see Section~\ref{subsec:local-reduction}. Second, the density argument is a general fact that is easy to formalize in a systematic way. Finally, we found that working directly with arbitrary polyanalytic functions rather than ``finite" polynomials posed unnecessary challenges in the Lean formalization.
 \vspace{0.5em}

The formalization process was completed with extensive assistance from LLMs; namely, the first result was formalized with Claude Opus 4.6 in Claude Code and later generalizations with GPT 5.4 and 5.5 through Codex, in all cases using Oliver Dressler's MCP \cite{DresslerMCP} under the supervision of the authors. The authors checked the statements of the main theorems and provided support whenever the formal development encountered blockers, such as statement mismatches between the blueprint and the corresponding results in the Lean files, or overly strong statements that made the proof substantially harder to fill in. We found, for example, that it was worthwhile to make all statements as quantitative as possible and to avoid  compactness arguments -- remnants of this insistence can be (intentionally) witnessed by the explicit choice of constants in the proof, which we did not try to optimize. For clarity, the reviewer facing statement of the main theorem in Section~\ref{App} was rewritten to assert the existence of a constant rather than to choose an explicit constant. However, some of the less general statements remain quantitative, namely the one for the Fock space case, which is an exact copy of Theorem \ref{thm:local}. 
 \vspace{0.5em}

The full Lean code is available on the GitHub repository, which can be accessed from here: https://github.com/susannabertolini/PhaseRetrieval. We remark that we put substantial effort into properly presenting the \emph{statement} of the main theorem in Lean (see Section~\ref{App}) as well as in decoupling the Lean statement of the theorem from the LLM generated proof (which in itself is of low quality). The main difficulties of the formalization arose from incorrect translations from the blueprint to the scaffolding: this led to incorrect or imprecise scaffolding files even with an extremely detailed blueprint. Another recurring issue was the tendency of the LLMs to incorporate unnecessary Hilbert-space structure whenever the blueprint mentioned it for context, even if the main theorem was clearly stated for finite linear combinations only. This introduced many density arguments that did not play a role in the proof of the main theorem but created blockers that had to be recognized as irrelevant and removed before being able to complete the formalization. 
\subsection{Acknowledgments}
J.P.G.R.~was supported by the FCT through project SHADE (project 2023.17881.ICDT, DOI  10.54499 / 2023.17881.ICDT) and by FAPERJ through the JCNE grant no.~SEI-260003/020475/2025.
 \vspace{0.5em}

The authors thank Alessio Figalli, Javier Gomez-Serrano and Joaquim Serra for their support and guidance during the preparation of this article. J.D.~and M.T.~also thank Manuel Guizar-Sicairos and Abraham Levitan for hosting them at PSI and for the instructive discussions on the experimental aspects of phase retrieval. Finally, we thank the Banach lattice and phase retrieval communities for their encouragement to pursue this line of research. 
 \vspace{0.5em}

The latter parts of this work were conducted while J.D.~and M.T.~were in residence at the Institute for Computational and Experimental Research in Mathematics in Providence, RI, during the \emph{Techniques and Tools for the Formalization of Analysis} program. This program was supported by the National Science Foundation under Grant No.~DMS-2424556. The authors are grateful to the organizers as well as to several participants for their helpful comments on the exposition of the formalized statement in Section \ref{App}.

\section{Proof of Theorem~\ref{main:local}}
Here, we present the proof of Theorem~\ref{main:local}. We begin by giving an outline of the proof. After this, we present some general reductions. Finally, we tackle the main estimate specific to Fock space phase retrieval. 

\subsection{Outline of the proof}\label{sec:strategy}
Below, we give an outline of the logical structure of the proof of Theorem \ref{main:local} as well as a brief overview of the key ideas. We will elaborate on each point in more detail in the corresponding subsection. 
 \vspace{0.5em}

\noindent\textbf{The local reduction
(Section~\ref{subsec:local-reduction}).}\;
The main goal of this subsection will be to show that Theorem \ref{main:local} is implied by Theorem \ref{thm:local}. This will follow from a more general argument that holds in any reproducing kernel Hilbert space (which the Fock space is a particular example of). Essentially, we will show that if one
can prove phase retrieval at a function \(F_0\) and establish local stability
with respect to very small perturbations of \(F_0\), then one can prove local
stability for general perturbations of \(F_0\). In the context of Theorem \ref{main:local}, one takes $F_0=1$, but we prefer to state this argument in a more abstract form so that it also applies (for instance) in the context of Theorem \ref{thm:local_Gen}. 
 \vspace{0.5em}

\noindent
\textbf{The orthogonal reduction (Section~\ref{subsec:reduction}).}\;
Another general argument will be used to show that if orthogonal perturbations
of a unit vector $F_0$ satisfy a certain coercivity estimate with constant~$M$,
then all sufficiently small perturbations (after optimizing over a global
phase) satisfy the same coercivity estimate with a comparable constant $\widetilde{M} = O(M)$. We leave the precise statement and proof of this reduction to Section \ref{subsec:reduction}, but we emphasize that it is just the natural local variant of \cite[Theorem 3.10]{MR4800909}. In the special case of the Fock space $\mathcal{F}^2(\mathbb{C})$, this observation will enable us to reduce the proof of Theorem
\ref{thm:local} to the following ``orthogonal
coercivity" estimate, whose proof is the technical heart of the paper.

\begin{theorem}[Fock-space coercivity]\label{thm:orthog}
There exists an absolute constant $C$ such that for every $D \ge 1$ and every polynomial
$U(z) = \sum_{n=1}^D a_n z^n$ with $U(0) = 0$, we have
\[
  \norm{U}_{\mathcal{F}} \le C\,\norm{\rho(U)}_{\mathcal{F}}.
\]
Here, $\rho(w)\coloneq ||1+w|-1|$ and one can take $C = 4600$.
\end{theorem}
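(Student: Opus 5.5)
The plan is to work circle by circle in polar coordinates, as the discussion in Section~\ref{LLM Sect} suggests. Then I would integrate the resulting circle estimates against the radial Gaussian weight, which gives
\[
\norm{U}_{\mathcal F}^2=\int_0^\infty\Bigl(\tfrac{1}{2\pi}\int_0^{2\pi}\abs{U(re^{\ii\theta})}^2\,\dd\theta\Bigr)2re^{-r^2}\,\dd r .
\]
Fix $r>0$ and write $u(\theta)=U(re^{\ii\theta})=\sum_{n=1}^D a_nr^ne^{\ii n\theta}$, $v=\abs{1+u}-1$, so that $\rho(u)=\abs{v}$. I would use three elementary facts, where $\langle\cdot\rangle$ denotes the circle average.
\begin{enumerate}[(i)]
\item The pointwise identity $2v+v^2=2\,\mathrm{Re}\,u+\abs{u}^2$, equivalently $v=(2\,\mathrm{Re}\,u+\abs{u}^2)/(\abs{1+u}+1)$.
\item Since $u$ has only positive frequencies, $\langle \mathrm{Re}\,u\rangle=0$, $\mathrm{Im}\,u$ is the conjugate function of $\mathrm{Re}\,u$, and $\langle\abs{u}^2\rangle=2\langle(\mathrm{Re}\,u)^2\rangle$.
\item The pointwise bounds $(\mathrm{Re}\,u)_+\le\rho(u)$, which holds because $\abs{1+w}\ge 1+\mathrm{Re}\,w$, and $\rho(w)\ge\abs{w}-2$.
\end{enumerate}
Averaging (i) and using (ii) gives the exact identity $\langle\abs{u}^2\rangle=2\langle v\rangle+\langle v^2\rangle$. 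Hence $\langle\abs u^2\rangle\le 2\langle\rho\rangle+\langle\rho^2\rangle$, and after integrating in $r$ and applying Cauchy--Schwarz against the probability measure $\gamma$,
\[
\norm{U}_{\mathcal F}^2\le 2\norm{\rho(U)}_{\mathcal F}+\norm{\rho(U)}_{\mathcal F}^2 .
\]

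This crude bound already settles the regime $\norm{\rho(U)}_{\mathcal F}\gtrsim1$, where it gives $\norm{U}_{\mathcal F}\lesssim\norm{\rho(U)}_{\mathcal F}$. It also shows that in the small regime one has $\norm{U}_{\mathcal F}\lesssim\norm{\rho(U)}_{\mathcal F}^{1/2}$, so $U$ is a priori small. It remains to upgrade the square-root bound to a linear one when $\varepsilon:=\norm{\rho(U)}_{\mathcal F}$ is small, and the target is
\[
\Bigl|\int v\,\dd\gamma\Bigr|\;\lesssim\;\varepsilon\,\norm{U}_{\mathcal F}+\varepsilon^2 .
\]
Substituting this into $\norm{U}^2_{\mathcal F}=2\int v\,\dd\gamma+\int v^2\,\dd\gamma$ and absorbing would give $\norm{U}_{\mathcal F}\le C\varepsilon$. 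To obtain the target I would split $\mathbb C$ according to the size of $\abs{U}$.

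\emph{Region $\abs{U}\ge\eta$, for a small absolute constant $\eta$.} I would first try to show $\rho(U)\gtrsim_\eta\abs U$ except where $2\,\mathrm{Re}\,U\approx-\abs U^2$. On that exceptional set $\mathrm{Re}\,U$ is quantitatively negative, and since $\langle\mathrm{Re}\,u\rangle=0$ and $(\mathrm{Re}\,u)_+\le\rho$, its measure on each circle is controlled by $\langle\rho\rangle/\eta^2$.

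\emph{Region $\abs U<\eta$.} Here $v=\mathrm{Re}\,U+\tfrac12\abs{U}^2-\tfrac12 (\mathrm{Re}\,U)^2+O(\abs U^3)$. The main terms of $\int v\,\dd\gamma$ then become $\tfrac12\int(\abs U^2-(\mathrm{Re}\,U)^2)\,\dd\gamma=\tfrac12\int (\mathrm{Im}\,U)^2\,\dd\gamma$, because $\int\mathrm{Re}\,U\,\dd\gamma=0$. The point is that $\mathrm{Im}\,U$ must be paid for by $\mathrm{Re}\,U$ through the conjugate-function relation (ii). Moreover $\abs{\mathrm{Re}\,U}$ is essentially $\rho+\tfrac12\abs U^2$ pointwise, by (i) and (iii). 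Combining the two gives $\norm{\mathrm{Im}\,U}\lesssim\varepsilon+\norm{\abs U^2}$.

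I expect the main obstacle to be exactly this quadratic error $\norm{\abs{U}^2}_{L^2(\gamma)}=\norm{U}_{L^4(\gamma)}^2$. It is not controlled by $\norm U_{\mathcal F}^2$ uniformly in the degree: hypercontractivity in the Fock space only bounds it by a dilated norm $\norm{U(\sqrt2\,\cdot)}_{\mathcal F}$, which is where a naive argument loses the $\log$-factor mentioned in Section~\ref{LLM Sect}. To deal with it I would first try to exploit that the quadratic term is only needed on the set where $\abs U<\eta$, where $\abs{U}^2\le\eta\abs U$. This gives $\norm{\abs U^2\mathbf 1_{\abs U<\eta}}\le\eta\norm U_{\mathcal F}$, which can be absorbed into the left-hand side for $\eta$ small. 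The remaining difficulty is that the conjugate-function relation does not commute with truncation to $\{\abs U<\eta\}$. I would handle this on each circle by an $L^2$ estimate for the conjugate function of $\mathrm{Re}\,u\,\mathbf 1_{\abs u\ge\eta}$ in terms of the measure bound from the first region. The constants would be tracked explicitly throughout, in line with the quantitative choice $C=4600$.
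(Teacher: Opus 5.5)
Your preliminary steps are correct. Averaging $2v+v^2=2\,\mathrm{Re}\,u+\abs{u}^2$ gives $\norm{U}_{\mathcal F}^2=2\int v\,\dd\gamma+\int v^2\,\dd\gamma$, and hence $\norm{U}_{\mathcal F}\lesssim\norm{\rho(U)}_{\mathcal F}^{1/2}$. This is the large-amplitude half of Proposition~\ref{prop:local-circle}, and your small-$\abs U$ expansion using $\langle u^2\rangle=0$ is its small-amplitude half.

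\textbf{The gap.} It lies in the step meant to upgrade to a linear bound, namely controlling $\mathrm{Re}\,U$ on $\{\abs U\ge\eta\}$. The same issue hides in your second region, since $\int_{\abs U<\eta}\mathrm{Re}\,U\,\dd\gamma=-\int_{\abs U\ge\eta}\mathrm{Re}\,U\,\dd\gamma$ is not zero. On the exceptional set $E$ where $U$ hugs the circle $\abs{1+w}=1$, your only information is the Chebyshev bound $\abs{E_r}\lesssim\eta^{-2}\langle\rho\rangle_r$, which is \emph{linear} in $\rho$. Since $\abs U$ is bounded by an absolute constant on $E$, this gives $\int_E\abs U^2\,\dd\gamma\lesssim\eta^{-2}\norm{\rho(U)}_{L^1(\gamma)}\le\eta^{-2}\varepsilon$. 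Feeding that into the conjugate-function bound returns $\norm U_{\mathcal F}\lesssim\sqrt\varepsilon/\eta$, which is no better than the crude bound. You would need $\gamma(E)\lesssim\varepsilon^2$ (or a small multiple of $\norm U_{\mathcal F}^2$), and nothing in your plan produces that.

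\textbf{Why sharper per-circle bookkeeping cannot fix it.} No uniform circle estimate exists. Let $I$ be an arc of $\sigma$-measure $\delta$. Set $g=-K$ on $I$ and $g=K\delta/(1-\delta)$ off $I$. Let $F$ be analytic in the disc with $\mathrm{Re}\,F=g$ on the circle and $F(0)=0$, and put $u=\ee^{F}-1$. Then:
\begin{itemize}
\item $\langle u\rangle=0$ and $\rho(u)=\abs{\ee^{g}-1}$;
\item $\langle\abs u^2\rangle=\langle\ee^{2g}\rangle-1\ge(2K-1)\delta$;
\item $\langle\rho(u)^2\rangle\lesssim\delta+K^2\delta^2$.
\end{itemize}
With $t=K\delta$ small and $K\to\infty$, the ratio is $\gtrsim(t+K^{-1})^{-1}\to\infty$. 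Fourier truncation, using that $\rho$ is $1$-Lipschitz, gives polynomial examples. Off the tiny arc $I$ one has $\abs{1+u}=\ee^{g}=1+O(t)$. So the mass of $u$ on $\{\abs u\ge\eta\}$ lies in your exceptional set, whose measure is comparable (up to $\eta$-factors) to $\langle\rho(u)\rangle$, far larger than $\langle\rho(u)^2\rangle$.

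\textbf{The missing idea.} The Gaussian weight must be used to couple different radii, not merely to integrate circle bounds. The paper does this by frequency localization. The block $I_\ell=[\ell^2,(\ell+1)^2)$ lives on the annulus $\ell\le\abs z<\ell+1$ up to Gaussian leakage (Proposition~\ref{prop:single-leak}, Corollary~\ref{cor:total-leak}). So on $j\le\abs z\le j+1$ one only has to treat $V_j$, whose lowest frequency $(j-5)^2$ dominates its bandwidth $11(2j+1)$. In that regime the high-frequency estimate (Proposition~\ref{prop:high-freq}) gives a uniform constant. It averages $\rho$ over arcs on which $\ee^{\ii Nt}$ makes a full turn while the envelope is nearly constant, and this rules out broadband configurations like the one above. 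The finitely many small $j$ are handled by the $144L$ bound of Proposition~\ref{prop:local-circle}.
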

\begin{remark}
  The condition $U(0) = 0$ means that $U$ is orthogonal to the constant one
function in $\mathcal{F}^2(\mathbb{C})$. Thus, the orthogonal reduction tells us that Theorem~\ref{thm:orthog} is strong enough to recover our main results. We  remark that the constants in Theorem~\ref{thm:local} are obtained by inserting $C=4600$ into Theorem~\ref{thm:reduction}.
\end{remark}

We now outline the steps needed to prove Theorem~\ref{thm:orthog}.
 \vspace{0.5em}

\noindent
\textbf{Polar coordinates.}\;
A simple but important idea in our proof below is to represent the polynomial $U$ in Theorem \ref{thm:orthog} in polar coordinates. This will enable us to rephrase many of our estimates in terms of estimates for trigonometric polynomials (where we have more direct access to Fourier analytic tools). We further note that the Gaussian integral decomposes as
\begin{equation}\label{eq:polar}
\frac{1}{\pi}\int_{\mathbb{C}} F(z)\, \mathrm{e}^{-\lvert z \rvert^2}\,\mathrm{d} m(z)
\;=\;
2\int_0^\infty r\,\mathrm{e}^{-r^2}
\biggl(\int_{S^1} F(r\mathrm{e}^{\mathrm{i} t})\,\mathrm{d}\sigma(t)\biggr)\mathrm{d} r,
\end{equation}
where $\mathrm{d}\sigma = \mathrm{d} t/(2\pi)$ is the probability Haar measure on the
circle. Moreover, both norms in Theorem~\ref{thm:orthog} decompose in this way, so it
suffices to compare the inner circle integrals for each radius~$r$. This simple observation will be very useful in the forthcoming analysis.
 \vspace{0.5em}

\noindent
\textbf{Circle estimates (Section~\ref{subsec:circle}).}\;
On a circle of radius~$r$, the function $t \mapsto U(r\mathrm{e}^{\mathrm{i} t})$ is a
trigonometric polynomial with strictly positive frequencies (since
$U(0) = 0$).  We prove two estimates relating the $L^2$ norm of such a
polynomial~$P$ to that of $\rho \circ P$:
\begin{enumerate}[(i)]
\item A \emph{local circle estimate} that works for any finite set of
  positive frequencies, at the cost of a constant depending on how many
  frequencies are present.

\item A \emph{high-frequency estimate} that applies when the lowest
  frequency is much larger than the bandwidth of frequencies, giving a universal constant
  independent of the number of frequencies.
\end{enumerate}
 \vspace{0.5em}

\noindent
\textbf{Frequency localization (Section~\ref{subsec:blocks}).}\; The polynomial $U$ may have up to $D$ frequencies, but the monomial~$z^n$
concentrates its Gaussian mass near the annulus $\lvert z \rvert \approx \sqrt{n}$.
We exploit this by partitioning the frequencies into blocks, each localized
to a specific annulus.  For each annulus, only a bounded window of nearby
blocks contributes significantly, and the energy leaking from distant blocks
decays as a Gaussian in the block distance.
 \vspace{0.5em}

\noindent
\textbf{Finishing the proof (Section~\ref{subsec:assembly}).}\;
Here, we assemble the estimates of the previous two subsections to complete the proof of the orthogonal coercivity bound in Theorem \ref{thm:orthog}. The observation is that on each annulus, the nearby blocks form a trigonometric polynomial to which
one of the two circle estimates applies.  This gives a uniform bound on the
$\rho$-norm of the ``local'' polynomial.  We then use the $1$-Lipschitz
property of $\rho$ to pass from the local polynomial to the full $U$,
integrate over all annuli, and absorb the exponentially small leakage.

\subsection{The local reduction}\label{subsec:local-reduction}
In this subsection, we  reduce Theorem~\ref{main:local} to Theorem~\ref{thm:local}. The reduction is abstract and only relies on the RKHS structure of  the Fock space. The first step is to show that local stable phase retrieval is implied by an
even more local variant. The second step is a simple density argument. 

\begin{proposition}[Local reduction]\label{prop:local-reduction}
Let \(\mathcal{H}\) be a reproducing kernel Hilbert space of real or complex-valued
functions isometrically embedded in \(L^2(\Omega,\mu)\) where $(\Omega,\mu)$ is some measure space. Fix \(F\in \mathcal{H}\) and
\(\epsilon>0\). Suppose that there exists a constant \(C>0\) such that for
all \(G\in \mathcal{H}\) satisfying
\[
\inf_{\lvert \lambda \rvert=1}\lVert F-\lambda G \rVert_{L^2(\Omega)}<\epsilon
\]
we have
\[
\inf_{\lvert \lambda \rvert=1}\lVert F-\lambda G \rVert_{L^2(\Omega)}
\le
C\lVert \lvert F \rvert-\lvert G \rvert \rVert_{L^2(\Omega)}.
\]
Suppose also that phase retrieval holds at \(F\), i.e., if \(G\in \mathcal{H}\) and
\(\lvert F \rvert=\lvert G \rvert\)~a.e., then there exists \(\lambda\in\mathbb{C}\) with
\(\lvert \lambda \rvert=1\) so that \(F=\lambda G\). Then there exists a constant
\(C'>0\) such that for all \(G\in \mathcal{H}\),
\[
\inf_{\lvert \lambda \rvert=1}\lVert F-\lambda G \rVert_{L^2(\Omega)}
\le
C'\lVert \lvert F \rvert-\lvert G \rvert \rVert_{L^2(\Omega)}.
\]
\end{proposition}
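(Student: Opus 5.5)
The plan is to argue by contradiction, using weak compactness of bounded sets in $\mathcal{H}$ and the fact that in a reproducing kernel Hilbert space weak convergence implies pointwise convergence. Write $d(F,G):=\inf_{\lvert\lambda\rvert=1}\lVert F-\lambda G\rVert_{L^2(\Omega)}$. Suppose no finite $C'$ exists. Then there is a sequence $G_n\in\mathcal{H}$ with
\[
d(F,G_n) > n\,\lVert \lvert F\rvert-\lvert G_n\rvert\rVert_{L^2(\Omega)} .
\]
For $n> C$ the local hypothesis forces $d(F,G_n)\ge\epsilon$.

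\textbf{Step 1: a priori bounds.} Since $\lVert\,\lvert G\rvert\,\rVert=\lVert G\rVert$, the triangle inequality gives
\[
\bigl\lvert \lVert G_n\rVert-\lVert F\rVert\bigr\rvert\le \lVert \lvert F\rvert-\lvert G_n\rvert\rVert .
\]
Combined with $d(F,G_n)\le \lVert F\rVert+\lVert G_n\rVert$, this yields
\[
d(F,G_n)\le 2\lVert F\rVert+\lVert \lvert F\rvert-\lvert G_n\rvert\rVert .
\]
Substituting into the contradiction inequality gives $(n-1)\lVert \lvert F\rvert-\lvert G_n\rvert\rVert< 2\lVert F\rVert$. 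Hence $\lVert \lvert F\rvert-\lvert G_n\rvert\rVert\to 0$. It follows that $(G_n)$ is bounded in $\mathcal{H}$ and that $\lVert G_n\rVert\to\lVert F\rVert$.

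\textbf{Step 2: extracting a limit.} By weak compactness, pass to a subsequence with $G_n\rightharpoonup G$ weakly in $\mathcal{H}$. Since point evaluations are continuous (they are inner products with kernel functions), $G_n(x)\to G(x)$ for every $x\in\Omega$. Since $\lvert G_n\rvert\to\lvert F\rvert$ in $L^2(\Omega,\mu)$, a further subsequence converges $\mu$-a.e. Comparing the two limits gives $\lvert G\rvert=\lvert F\rvert$ a.e. Phase retrieval at $F$ then gives $F=\lambda_0 G$ for some unimodular $\lambda_0$. In particular $\lVert G\rVert=\lVert F\rVert$. (The isometric embedding is what lets us use the $\mathcal{H}$-norm and the $L^2$-norm interchangeably here.)

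\textbf{Step 3: upgrading to norm convergence.} We now have $G_n\rightharpoonup G$ and $\lVert G_n\rVert\to\lVert F\rVert=\lVert G\rVert$. In a Hilbert space this forces $G_n\to G$ in norm. Consequently
\[
d(F,G_n)\le\lVert \lambda_0 G-\lambda_0 G_n\rVert=\lVert G-G_n\rVert\to 0,
\]
which contradicts $d(F,G_n)\ge\epsilon$.

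The only genuinely delicate point is Step 2: we need the weak limit to be identified pointwise and matched with the a.e. limit of $\lvert G_n\rvert$. This is precisely where the RKHS structure enters, and why the hypothesis is needed. The degenerate case $F=0$ requires no separate treatment, since there $d(0,G)=\lVert G\rVert=\lVert \lvert 0\rvert-\lvert G\rvert\rVert$ and $C'=1$ works directly.
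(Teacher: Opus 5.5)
Your proof is correct and follows the paper's route: a contradiction sequence, weak compactness in $\mathcal{H}$, pointwise convergence from bounded point evaluations, phase retrieval to identify the weak limit as a unimodular multiple of $F$, and convergence of norms to upgrade weak convergence to strong convergence. The differences are cosmetic. You get boundedness of $(G_n)$ directly from $(n-1)\lVert \lvert F\rvert-\lvert G_n\rvert\rVert<2\lVert F\rVert$ rather than from the paper's preliminary case split at $\lVert G\rVert\ge 4\lVert F\rVert$, and you make explicit why the local hypothesis forces $d(F,G_n)\ge\epsilon$ once $n>C$.
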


\begin{proof}
We need to verify that there exists a constant \(C'\) such that for all
\(G\in \mathcal{H}\),
\[
\inf_{\lvert \lambda \rvert=1}\lVert F-\lambda G \rVert_{L^2(\Omega)}
\le
C'\lVert \lvert F \rvert-\lvert G \rvert \rVert_{L^2(\Omega)}.
\]
First suppose that \(\lVert G \rVert_{L^2(\Omega)}\ge4\lVert F \rVert_{L^2(\Omega)}\).
Then
\[
\lVert \lvert F \rvert-\lvert G \rvert \rVert_{L^2(\Omega)}
\ge
\lVert G \rVert_{L^2(\Omega)}-\lVert F \rVert_{L^2(\Omega)}
\ge
\frac34\lVert G \rVert_{L^2(\Omega)},
\]
while
\[
\inf_{\lvert \lambda \rvert=1}\lVert F-\lambda G \rVert_{L^2(\Omega)}
\le
\lVert F \rVert_{L^2(\Omega)}+\lVert G \rVert_{L^2(\Omega)}
\le
\frac54\lVert G \rVert_{L^2(\Omega)}.
\]
Thus the desired inequality holds in this case with any \(C'\ge5/3\).
Therefore, we may restrict attention to \(G\) with
\(\lVert G \rVert_{L^2(\Omega)}<4\lVert F \rVert_{L^2(\Omega)}\).
 \vspace{0.5em}

Now suppose that the desired inequality is true for all \(G\) satisfying
\[
\inf_{\lvert \lambda \rvert=1}\lVert F-\lambda G \rVert_{L^2(\Omega)}<\epsilon,
\]
but is not true in general. Then for every sufficiently large \(n\) we can find \(G_n\in \mathcal{H}\)
with
\[
\inf_{\lvert \lambda \rvert=1}\lVert F-\lambda G_n \rVert_{L^2(\Omega)}\ge\epsilon,
\]
\[
\inf_{\lvert \lambda \rvert=1}\lVert F-\lambda G_n \rVert_{L^2(\Omega)}
>
n\lVert \lvert F \rvert-\lvert G_n \rvert \rVert_{L^2(\Omega)},
\]
and also \(\lVert G_n \rVert_{L^2(\Omega)}<4\lVert F \rVert_{L^2(\Omega)}\). This latter
inequality guarantees that
\(\inf_{\lvert \lambda \rvert=1}\lVert F-\lambda G_n \rVert_{L^2(\Omega)}\) is uniformly
bounded, and hence
\[
\lVert \lvert F \rvert-\lvert G_n \rvert \rVert_{L^2(\Omega)}\to0.
\]
In particular, after passing to a subsequence, we may assume that
\[
\lvert G_{n_k}(x) \rvert\to\lvert F(x) \rvert
\qquad\text{for almost every }x\in\Omega.
\]
Since \(\lVert G_{n_k} \rVert_{L^2(\Omega)}\) is bounded independently of \(k\) and
the embedding of \(\mathcal{H}\) into \(L^2(\Omega,\mu)\) is isometric, the sequence
\((G_{n_k})\) is bounded in \(\mathcal{H}\). Passing to a further subsequence if
necessary, we may assume that \(G_{n_{k_\ell}}\rightharpoonup G\) weakly in
\(\mathcal{H}\). Since point evaluations are bounded on \(\mathcal{H}\), we have
\[
G_{n_{k_\ell}}(x)\to G(x)
\]
for every \(x\in\Omega\). Hence \(\lvert G_{n_{k_\ell}}(x) \rvert\to\lvert G(x) \rvert\) for
every \(x\in\Omega\). This forces \(\lvert G \rvert=\lvert F \rvert\) almost everywhere. By the
phase-retrieval hypothesis, \(G=\lambda F\) for some unimodular scalar \(\lambda\).
 \vspace{0.5em}

Collecting what we know, \(G_{n_{k_\ell}}\) converges weakly in \(\mathcal{H}\) to
\(\lambda F\). Moreover, from
\(\lVert \lvert F \rvert-\lvert G_n \rvert \rVert_{L^2(\Omega)}\to0\), we see that
\[
\lVert G_n \rVert_{L^2(\Omega)}=\lVert \lvert G_n \rvert \rVert_{L^2(\Omega)}
\to
\lVert \lvert F \rvert \rVert_{L^2(\Omega)}
=\lVert \lambda F \rVert_{L^2(\Omega)}.
\]
Because the embedding is isometric, weak convergence in \(\mathcal{H}\) together with
convergence of the \(L^2\)-norms to the norm of the weak limit implies strong
convergence in \(L^2\). Thus \(G_{n_{k_\ell}}\to\lambda F\) in
\(L^2(\Omega)\). For all sufficiently large \(\ell\), this contradicts (after relabeling the unimodular scalar)
\[
\inf_{\lvert \lambda \rvert=1}
\lVert F-\lambda G_{n_{k_\ell}} \rVert_{L^2(\Omega)}
\ge \epsilon.
\]
\end{proof}
\subsubsection{Proof of Theorem \ref{main:local}} Using the above reduction, we immediately deduce Theorem \ref{main:local} from Theorem \ref{thm:local} by density and by applying Proposition \ref{prop:local-reduction} with $F=1$ and $\mathcal{H}=\mathcal{F}^2(\mathbb{C})$. Indeed, Theorem \ref{thm:local} immediately extends from polynomial perturbations $P$ to arbitrary elements $\widetilde{P}\in \mathcal{F}^2(\mathbb{C})$ with $\|\widetilde{P}\|_{\mathcal{F}} < \delta$ by density. In that density passage, we choose polynomials $P_n \to \widetilde{P}$, apply Theorem \ref{thm:local}, pass to a subsequence of phases $w_n \to w$, and use continuity of $t \mapsto |1+t|$ in $L^2$.
Phase retrieval at the constant function in the Fock space holds because $|G| = 1$ a.e.~forces the entire function $G$ to be a unimodular constant. Hence, it is easy to see that Proposition \ref{prop:local-reduction} is applicable.
\subsection{The orthogonal reduction}\label{subsec:reduction}
In this subsection, we work in the generality of a complex Hilbert space $\mathcal{H}$
isometrically embedded in $L^2(\mu)$ for some measure space $(\Omega, \mu)$.
We show that \emph{orthogonal coercivity} -- a stability estimate for
perturbations perpendicular to a fixed unit vector -- implies \emph{local
stability} for all sufficiently small perturbations, after optimizing over a
global phase.  The argument is based on function space theory and uses no
structure specific to the Fock space. 

\begin{theorem}[Orthogonal reduction]\label{thm:reduction}
\par
Let $\mathcal{H}$ be a complex Hilbert space with norm $\lVert \cdot \rVert$, isometrically
embedded in $L^2(\Omega,\mu)$.  Let $f_0 \in \mathcal{H}$ with $\lVert f_0 \rVert = 1$ and
suppose that there exists $M > 0$ such that
\begin{equation}\label{eq:orthog-hyp}
\lVert g \rVert
\;\le\;
M\, \bigl\|\, \lvert f_0 + g \rvert - \lvert f_0 \rvert \,\bigr\|_{L^2}
\qquad
\text{for all } g \perp f_0.
\end{equation}
Then there exist $\delta > 0$ and $\widetilde{M} > 0$, depending only
on~$M$, such that
for every $h \in \mathcal{H}$ with $\lVert h \rVert \le \delta$ and
$\langle h,f_0\rangle \in \mathbb{R}$, we have
\[
\lVert h \rVert
\;\le\;
\widetilde{M}\, \bigl\|\, \lvert f_0 + h \rvert - \lvert f_0 \rvert \,\bigr\|_{L^2}.
\]
One may take $\delta = 1/(M+1)$ and $\widetilde{M} = 5M + 3$.
\end{theorem}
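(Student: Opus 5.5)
The plan is to split $h$ along $f_0$ and its orthogonal complement, rescale so that the orthogonal part becomes an admissible perturbation in \eqref{eq:orthog-hyp}, and then control the component along $f_0$ by testing against $\lvert f_0\rvert$.

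Write $h = t f_0 + g$ with $t = \langle h, f_0\rangle \in \mathbb{R}$ and $g \perp f_0$. Then $\lvert t\rvert \le \lVert h\rVert \le \delta < 1$, so $1+t>0$. Put $g' := g/(1+t)$, which is still orthogonal to $f_0$. Then $f_0 + h = (1+t)(f_0 + g')$. Setting
\[
A := \lvert f_0+g'\rvert - \lvert f_0\rvert, \qquad \Delta := \lvert f_0+h\rvert - \lvert f_0\rvert,
\]
we get the pointwise identity $\Delta = (1+t)A + t\lvert f_0\rvert$. The hypothesis \eqref{eq:orthog-hyp} applied to $g'$ gives $\lVert g'\rVert \le M\lVert A\rVert$. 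Also, since $\lVert g\rVert\le \lVert h\rVert\le\delta$, we have $\lVert g'\rVert \le \delta/(1-\delta) = 1/M$ for the choice $\delta = 1/(M+1)$.

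The key step is to control the scalar $t$, which \eqref{eq:orthog-hyp} says nothing about. Because the embedding is isometric and $g'\perp f_0$,
\[
\bigl\lVert A + \lvert f_0\rvert \bigr\rVert^2 = \lVert f_0+g'\rVert^2 = 1+\lVert g'\rVert^2 ,
\]
and expanding the left side gives $2\langle A, \lvert f_0\rvert\rangle + \lVert A\rVert^2 = \lVert g'\rVert^2$. Hence $\lvert\langle A,\lvert f_0\rvert\rangle\rvert \le \tfrac12\lVert g'\rVert^2$, which is quadratically small. Pairing the identity for $\Delta$ with $\lvert f_0\rvert$ (a unit vector) gives
\[
t = \langle \Delta, \lvert f_0\rvert\rangle - (1+t)\langle A,\lvert f_0\rvert\rangle ,
\]
so $\lvert t\rvert \le \lVert\Delta\rVert + (1+t)\tfrac12\lVert g'\rVert^2$.

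Next I would remove the quadratic term without circularity. Write $\lVert g'\rVert^2 \le \lVert g'\rVert\cdot M\lVert A\rVert$, and from the identity for $\Delta$ note $(1+t)\lVert A\rVert \le \lVert\Delta\rVert + \lvert t\rvert$. Combining these,
\[
\lvert t\rvert \le \lVert\Delta\rVert + \tfrac{M\lVert g'\rVert}{2}\bigl(\lVert\Delta\rVert+\lvert t\rvert\bigr).
\]
Since $M\lVert g'\rVert\le 1$, the coefficient is at most $\tfrac12$. Absorbing the $\lvert t\rvert$ term then yields $\lvert t\rvert \le 3\lVert\Delta\rVert$.

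Finally,
\[
\lVert g\rVert = (1+t)\lVert g'\rVert \le M(1+t)\lVert A\rVert \le M\bigl(\lVert\Delta\rVert+\lvert t\rvert\bigr) \le 4M\lVert\Delta\rVert .
\]
Therefore $\lVert h\rVert \le \lvert t\rvert + \lVert g\rVert \le (4M+3)\lVert\Delta\rVert$, which is within the claimed $\widetilde M = 5M+3$.

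I expect the main obstacle to be the estimate on $t$. The coercivity hypothesis gives no information in the $f_0$ direction, and the cross term $\langle A, \lvert f_0\rvert\rangle$ is small only quadratically in $g'$. Turning that quadratic smallness into something absorbable requires exactly the smallness $M\lVert g'\rVert\le 1$, and this is what fixes the choice $\delta = 1/(M+1)$.
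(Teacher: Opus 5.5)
Your proof is correct, with one line to supply. The identity $2\langle A,\lvert f_0\rvert\rangle+\lVert A\rVert^2=\lVert g'\rVert^2$ does not by itself give $\lvert\langle A,\lvert f_0\rvert\rangle\rvert\le\tfrac12\lVert g'\rVert^2$. You also need $\lVert A\rVert\le\lVert g'\rVert$, which follows from the pointwise bound $\bigl\lvert \lvert f_0+g'\rvert-\lvert f_0\rvert\bigr\rvert\le\lvert g'\rvert$; with it you in fact get $0\le\langle A,\lvert f_0\rvert\rangle\le\tfrac12\lVert g'\rVert^2$. Your constant $4M+3$ is exactly what the paper's computation yields: its $C(\delta)$ equals $2M+\tfrac32$, and the paper then rounds $2C(\delta)$ up to $5M+3$.

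The overall architecture matches the paper's, but the mechanics differ. The paper keeps the additive splitting $h=af_0+g$ and proceeds in three moves:
\begin{itemize}
\item It bounds $a$ by integrating $\lvert f_0+h\rvert^2-\lvert f_0\rvert^2=2\operatorname{Re}(h\overline{f_0})+\lvert h\rvert^2$ and applying Cauchy--Schwarz to the factorization $(\lvert f_0+h\rvert-\lvert f_0\rvert)(\lvert f_0+h\rvert+\lvert f_0\rvert)$.
\item It bounds $g$ by comparing $\lvert f_0+g\rvert$ with $\lvert f_0+h\rvert$ through the reverse triangle inequality, at cost $\lvert a\rvert$.
\item It absorbs the error $\tfrac{M+1}{2}\lVert h\rVert^2$ using $\lVert h\rVert\le\delta$.
\end{itemize}
The coercivity hypothesis is used only once.

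You instead normalize multiplicatively, writing $f_0+h=(1+t)(f_0+g')$. Positive homogeneity of the modulus then makes $\Delta=(1+t)A+t\lvert f_0\rvert$ an exact identity. The only nonlinear error left is the scalar $\langle A,\lvert f_0\rvert\rangle$, and you invoke the hypothesis a second time to turn its quadratic smallness into a term you can absorb into $\lvert t\rvert$.

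The paper's argument is slightly shorter. Yours has the advantage that the smallness condition is scale invariant. You only used $t>-1$ and $M\lVert g\rVert\le 1+t$, so the same chain gives $\lVert h\rVert\le(4M+3)\bigl\lVert\lvert f_0+h\rvert-\lvert f_0\rvert\bigr\rVert_{L^2}$ for every $h=tf_0+g$ with $t>-1$, $g\perp f_0$ and $\lVert g\rVert\le(1+t)/M$. In other words, only the angle between $f_0+h$ and $f_0$ has to be small, not $\lVert h\rVert$.
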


\begin{remark}
The condition $\langle h,f_0\rangle \in \mathbb{R}$ is a phase normalization: it says that
$h$ has been rotated so that its component along $f_0$ is real.  In the
application to Theorem~\ref{thm:local}, this is achieved by choosing the
optimal phase $w$ so that $\langle w(f_0 + p) - f_0,f_0\rangle$ is real, and aligning the phase of the inner product
$\langle f_0+p,f_0\rangle$.
\end{remark}

\begin{proof}
Write $m = \bigl\|\, \lvert f_0 + h \rvert - \lvert f_0 \rvert \,\bigr\|_{L^2}$ for the right-hand side of our inequality. We break the proof into four simple steps.
 \vspace{0.5em}

\noindent
\emph{Step 1: Orthogonal decomposition.}\;
Set $a = \operatorname{Re}\langle h,f_0\rangle = \langle h,f_0\rangle$ and
$g = h - a f_0$.  Then $g \perp f_0$, $h = af_0 + g$ and
$\lVert h \rVert^2 = a^2 + \lVert g \rVert^2$, so
\begin{equation}\label{eq:triangle}
\lVert h \rVert \;\le\; \lvert a \rvert + \lVert g \rVert.
\end{equation}

\noindent
\emph{Step 2: Control of $a$.}\;
The pointwise identity
$\lvert f_0 + h \rvert^2 - \lvert f_0 \rvert^2
= 2\,\operatorname{Re}(h\,\overline{f_0}) + \lvert h \rvert^2$
integrates to
\[
\int_\Omega \bigl(\lvert f_0 + h \rvert^2 - \lvert f_0 \rvert^2\bigr)\,\mathrm{d}\mu
= 2a + \lVert h \rVert^2.
\]
On the other hand, the factorization
$\lvert f_0 + h \rvert^2 - \lvert f_0 \rvert^2
= \bigl(\lvert f_0 + h \rvert - \lvert f_0 \rvert\bigr)
  \bigl(\lvert f_0 + h \rvert + \lvert f_0 \rvert\bigr)$
and Cauchy--Schwarz give
\[
\lvert 2a + \lVert h \rVert^2 \rvert
\;\le\;
m \cdot \bigl\|\, \lvert f_0 + h \rvert + \lvert f_0 \rvert \,\bigr\|_{L^2}
\;\le\;
m\,(2 + \lVert h \rVert).
\]
Hence,
\begin{equation}\label{eq:a-bound}
\lvert a \rvert
\;\le\;
\frac{2 + \lVert h \rVert}{2}\, m
+ \frac{\lVert h \rVert^2}{2}.
\end{equation}

\noindent
\emph{Step 3: Control of $g$.}\;
The reverse triangle inequality
$\bigl\lvert \lvert f_0 + g \rvert - \lvert f_0 + h \rvert \bigr\rvert
\le \lvert g - h \rvert = \lvert a \rvert\,\lvert f_0 \rvert$
gives
$\bigl\|\, \lvert f_0 + g \rvert - \lvert f_0 \rvert \,\bigr\|_{L^2}
\le \lvert a \rvert + m$.
The hypothesis~\eqref{eq:orthog-hyp} then yields
\begin{equation}\label{eq:g-bound}
\lVert g \rVert \;\le\; M\,(\lvert a \rvert + m).
\end{equation}

\noindent
\emph{Step 4: Completing the proof.}\;
Substituting~\eqref{eq:a-bound} and~\eqref{eq:g-bound}
into~\eqref{eq:triangle}, we see that
\[
\lVert h \rVert
\;\le\;
\lvert a \rvert + M(\lvert a \rvert + m)
= (M+1)\lvert a \rvert + Mm
\;\le\;
\underbrace{\Bigl(M + \frac{(M+1)(2+\lVert h \rVert)}{2}\Bigr)}_{=:\, C(\lVert h \rVert)}
m
\;+\; \frac{M+1}{2}\,\lVert h \rVert^2.
\]
Since $\lVert h \rVert \le \delta$, we have $\lVert h \rVert^2 \le \delta\,\lVert h \rVert$, so
\[
\lVert h \rVert
\;\le\; C(\delta)\, m + \frac{(M+1)\delta}{2}\,\lVert h \rVert.
\]
Choosing $\delta = 1/(M+1)$ makes $(M+1)\delta/2 = 1/2$, giving
$\lVert h \rVert/2 \le C(\delta)\, m$ and therefore
\[
\lVert h \rVert \;\le\; 2\, C(\delta)\, m.
\]
A short computation shows that
$C(\delta) = M + (M+1)(2 + \delta)/2 < (5M+3)/2$, so one may take
$\widetilde{M} = 5M + 3$.
\end{proof}

\begin{corollary}[Fock-space application]\label{cor:fock-local}
Taking $f_0 = 1$ (the constant function), $\mathcal{H} = \mathcal{F}^2(\mathbb{C})$, and
$M = 4600$ (from Theorem~\ref{thm:orthog}), the orthogonal reduction gives
$\delta = 1/4601$ and $\widetilde{M} = 23003$.
This is Theorem~\ref{thm:local}.
\end{corollary}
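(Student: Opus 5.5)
The plan is to derive Corollary~\ref{cor:fock-local} by applying Theorem~\ref{thm:reduction} directly, with $\mathcal{H}=\mathcal{F}^2(\mathbb{C})$ embedded isometrically in $L^2(\mathbb{C},\pi^{-1}e^{-|z|^2}\,dm)$ and $f_0=1$. Note that $\|1\|_{\mathcal{F}}=1$. The arithmetic is immediate: $\delta=1/(M+1)=1/4601$ and $\widetilde M=5M+3=23003$. The real work is to check the two hypotheses: the orthogonal coercivity \eqref{eq:orthog-hyp} with $M=4600$, and the phase normalization $\langle h,1\rangle\in\mathbb{R}$ with $\|h\|\le\delta$.

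\textbf{Hypothesis \eqref{eq:orthog-hyp} on polynomials.} Take a polynomial $g\perp 1$. Orthogonality to $1$ means $g(0)=0$, so $g=\sum_{n=1}^D a_nz^n$. In this setting $\bigl||1+g|-1\bigr|=\rho(g)$ pointwise, and the Fock norm of a measurable function depends only on its modulus. Hence Theorem~\ref{thm:orthog} gives exactly $\|g\|\le 4600\,\bigl\||1+g|-1\bigr\|_{L^2}$. Theorem~\ref{thm:orthog} is stated only for polynomials, so I will not need to extend it to general $g\in\mathcal{F}^2$ with $g(0)=0$. Instead I will observe that the proof of Theorem~\ref{thm:reduction} uses \eqref{eq:orthog-hyp} only for the single element $g=h-af_0$. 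If $h$ is a polynomial, this $g$ is again a polynomial, so rerunning the argument of Theorem~\ref{thm:reduction} for polynomial $h$ suffices. (Alternatively, one could extend \eqref{eq:orthog-hyp} to all of $\{g:g(0)=0\}$ by density: truncate the Taylor series, use norm convergence, and note that $g\mapsto |1+g|-1$ is $1$-Lipschitz into $L^2$.)

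\textbf{Phase normalization.} Let $P$ be a polynomial with $\|P\|\le\delta$. Write $\langle 1+P,1\rangle=1+P(0)$. Since $|P(0)|\le\|P\|<1$, this is nonzero. Choose $w$ with $|w|=1$ so that $w(1+P(0))>0$, and set $h=w(1+P)-1$, which is again a polynomial. Then $\langle h,1\rangle=w(1+P(0))-1$ is real, and $\bigl||1+h|-1\bigr|=\bigl||1+P|-1\bigr|$ pointwise.

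\textbf{The main obstacle: smallness of $h$.} The one step I expect to require care is showing $\|h\|\le\delta$. The triangle inequality only gives $\|h\|\le |w-1|+\|P\|$, which need not be small. The fix is to look at the orthogonal decomposition inside the proof of Theorem~\ref{thm:reduction}. There, $a=|1+P(0)|-1$ satisfies $|a|\le|P(0)|$, and $g=wP-wP(0)$ has $\|g\|\le\|P\|$. Therefore $\|h\|^2=a^2+\|g\|^2\le |P(0)|^2+\|P-P(0)\|^2=\|P\|^2\le\delta^2$. With this bound, Theorem~\ref{thm:reduction} yields $\|w(1+P)-1\|\le 23003\,\bigl\||1+P|-1\bigr\|_{\mathcal{F}}$, which is Theorem~\ref{thm:local}.
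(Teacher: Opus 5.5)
Your proposal is correct and takes the same route as the paper: apply Theorem~\ref{thm:reduction} with $f_0=1$, use Theorem~\ref{thm:orthog} for the coercivity hypothesis (since $g\perp 1$ is equivalent to $g(0)=0$), and choose the phase $w$ so that $w(1+P(0))>0$. Two details you add are ones the paper calls straightforward or glosses over: the identity $h-a=w(P-P(0))$, which gives $\|h\|\le\|P\|\le\delta$, and the observation that coercivity is only needed for the polynomial $g=h-a$ (or extends to all $g$ with $g(0)=0$ by density).
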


\begin{proof}
The condition $g \perp 1$ in $\mathcal{F}^2(\mathbb{C})$ means precisely that
$g(0) = 0$, i.e., $g$ has no constant term.
Theorem~\ref{thm:orthog} provides the orthogonal coercivity
hypothesis~\eqref{eq:orthog-hyp} with $M = 4600$.
The optimal phase $w$ is chosen so that
$h = w(1 + P) - 1$ satisfies $\langle h,1\rangle \in \mathbb{R}$ and
$\lVert h \rVert_{\mathcal{F}} \le \delta$; the existence of such a $w$ is straightforward.
Theorem~\ref{thm:reduction} then yields the conclusion of
Theorem~\ref{thm:local}. 
\end{proof}
Our goal for the remaining subsections will  be to establish the key bound in Theorem  \ref{thm:orthog}.
 \subsection{Circle estimates}\label{subsec:circle}

In this section we prove two estimates controlling the $L^2$ norm of a
trigonometric polynomial $P$ on the circle in terms of the $L^2$ norm of
$\rho \circ P$.  Both exploit the fact that all frequencies of~$P$ are
strictly positive (which is a consequence of our reduction to the case $U(0)=0$) but they apply in
complementary regimes.  The \emph{local circle estimate}
(Proposition~\ref{prop:local-circle}) works for any frequency set and gives a
constant proportional to $\sqrt{L}$, where $L$ is the number of frequencies.
The \emph{high-frequency estimate} (Proposition~\ref{prop:high-freq}) applies
when the lowest frequency is much larger than the bandwidth of frequencies, and gives an
absolute constant. Together they will provide a uniform bound on every
annulus.
 \vspace{0.5em}

Throughout this section, $P$ denotes a trigonometric polynomial
$P(t) = \sum_{n \in E} b_n \mathrm{e}^{\mathrm{i} nt}$ for a finite set $E \subset \mathbb{Z}_{>0}$ 
and all norms are taken in $L^2(S^1, \mathrm{d}\sigma)$. Recall as well that the function $\rho(w) = \bigl\lvert \lvert 1+w \rvert - 1 \bigr\rvert$
 is $1$-Lipschitz, and in particular we have $\rho(w) \le \lvert w \rvert$ and
\begin{equation}\label{eq:rho-lip}
\rho(w) \;\ge\; \rho(z) - \lvert w - z \rvert.
\end{equation}

\subsubsection{The local circle estimate}\label{sec:local}

The following elementary inequality will be  used several times in the forthcoming calculations, so we record it here for convenience.

\begin{lemma}\label{lem:safe-square}
Let $a, b, c \ge 0$ with $a \ge b - c$.  Then
$a^2 \ge \tfrac{1}{2}\, b^2 - c^2$.
\end{lemma}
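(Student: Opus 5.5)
The plan is a direct case split on the sign of $b-c$, followed by the elementary inequality $(x-y)^2 \ge \tfrac12 x^2 - y^2$ for real $x,y$.

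\emph{Case $b \le c$.} Here $\tfrac12 b^2 \le b^2 \le c^2$, so the right-hand side $\tfrac12 b^2 - c^2$ is nonpositive. Since $a^2 \ge 0$, the inequality holds trivially.

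\emph{Case $b > c$.} Now $b - c > 0$, and the hypothesis $a \ge b - c$ together with $a \ge 0$ lets us square to get $a^2 \ge (b-c)^2$. It remains to show $(b-c)^2 \ge \tfrac12 b^2 - c^2$. For this, apply the AM--GM bound $2bc \le \tfrac12 b^2 + 2c^2$, which is equivalent to $\bigl(\tfrac{b}{\sqrt2} - \sqrt2\, c\bigr)^2 \ge 0$. This gives
\[
(b-c)^2 = b^2 - 2bc + c^2 \ge b^2 - \tfrac12 b^2 - 2c^2 + c^2 = \tfrac12 b^2 - c^2 .
\]

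There is no real obstacle. The only point needing care is that squaring $a \ge b-c$ is legitimate only when $b-c \ge 0$, which is exactly why the case split is needed. For the Lean formalization, the whole argument should close with a single \texttt{nlinarith} call supplied with the hints $\bigl(\tfrac{b}{\sqrt2}-\sqrt2 c\bigr)^2 \ge 0$ (or equivalently $(b-2c)^2 \ge 0$) and the case information.
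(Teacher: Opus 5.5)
Your proof is correct and follows essentially the same route as the paper. The paper splits on $b \le c$ versus $b > c$ in the same way, and in the second case uses the identity $(b-c)^2 - \tfrac12 b^2 + c^2 = \tfrac12 (b-2c)^2 \ge 0$, which is exactly your AM--GM step written as a perfect square.
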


\begin{proof}
If $b \le c$ the right-hand side is nonpositive.  If $b > c$ then
$a \ge b-c > 0$ gives $a^2 \ge (b-c)^2 \ge \tfrac{1}{2}b^2 - c^2$, where
the last step is the identity
$(b-c)^2 - \tfrac{1}{2}b^2 + c^2 = \tfrac{1}{2}(b-2c)^2 \ge 0$.
\end{proof}

\begin{proposition}[Local circle estimate]\label{prop:local-circle} Let $E \subset \mathbb{Z}_{>0}$ with $\lvert E \rvert = L \ge 1$, let $b_n \in \mathbb{C}$ for
$n \in E$, and set $P(t) = \sum_{n \in E} b_n\,\mathrm{e}^{\mathrm{i} nt}$.  Then
\[
\lVert P \rVert_{L^2}^2
\;\le\;
144\, L \cdot \lVert \rho \circ P \rVert_{L^2}^2.
\]
\end{proposition}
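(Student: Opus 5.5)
We split into two regimes according to the size of $\lVert P\rVert_{L^2}$, with threshold $\lVert P\rVert_{L^2}\approx 1/\sqrt L$. The factor $L$ enters only through the Cauchy--Schwarz bound $\lVert P\rVert_{L^\infty}\le \sum_{n\in E}\lvert b_n\rvert \le \sqrt L\,\lVert P\rVert_{L^2}$. We use two consequences of $E\subset\mathbb{Z}_{>0}$. First, $\int P\,\mathrm{d}\sigma=0$, so $\lVert 1+P\rVert_{L^2}^2 = 1+\lVert P\rVert_{L^2}^2$. Second, $\int P^2\,\mathrm{d}\sigma = 0$, because $P^2$ also has only positive frequencies. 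Writing $\operatorname{Re}P=(P+\overline P)/2$, this gives
\[
\lVert \operatorname{Re} P\rVert_{L^2}^2=\tfrac12\lVert P\rVert_{L^2}^2 .
\]
Throughout, $x:=\lVert P\rVert_{L^2}$.

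\emph{Large regime, $x\ge 1/(4\sqrt L)$.} Since $\rho(P)\ge \lvert 1+P\rvert-1$ pointwise, the reverse triangle inequality in $L^2$ gives
\[
\lVert\rho\circ P\rVert_{L^2}\ \ge\ \lVert 1+P\rVert_{L^2}-1=\sqrt{1+x^2}-1 .
\]
For $x\le 1$ we have $\sqrt{1+x^2}-1 = x^2/(\sqrt{1+x^2}+1)\ge x^2/3 \ge x/(12\sqrt L)$. Squaring gives $x^2\le 144L\,\lVert\rho\circ P\rVert_{L^2}^2$; this is exactly where the constant $144$ comes from. For $x\ge 1$ we have $\sqrt{1+x^2}-1\ge(\sqrt2-1)x$, which gives the bound with a constant of about $6$, well below $144L$.

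\emph{Small regime, $x< 1/(4\sqrt L)$.} Here $\lVert P\rVert_{L^\infty}\le 1/4$, so we linearize pointwise. For $\lvert w\rvert\le 1/4$ we have $\lvert 1+w\rvert=\sqrt{1+2\operatorname{Re}w+\lvert w\rvert^2}$, and a Taylor expansion gives the elementary bound
\[
\bigl\lvert\, \lvert 1+w\rvert-1-\operatorname{Re} w\,\bigr\rvert\le \lvert w\rvert^2\le \tfrac14\lvert w\rvert .
\]
Hence $\rho(P)\ge \lvert\operatorname{Re}P\rvert-\tfrac14\lvert P\rvert$ pointwise. Lemma~\ref{lem:safe-square}, with $a=\rho(P)$, $b=\lvert\operatorname{Re}P\rvert$ and $c=\lvert P\rvert/4$, gives
\[
\rho(P)^2\ge \tfrac12\lvert \operatorname{Re}P\rvert^2-\tfrac1{16}\lvert P\rvert^2 .
\]
Integrating and using $\lVert\operatorname{Re}P\rVert_{L^2}^2=x^2/2$ yields $\lVert\rho\circ P\rVert_{L^2}^2\ge (\tfrac14-\tfrac1{16})x^2=\tfrac3{16}x^2$. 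This is much better than needed.

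The main step is the small regime. It needs two things: the orthogonality identity $\int P^2\,\mathrm{d}\sigma=0$, which stops $\operatorname{Re}P$ from being small in $L^2$ when $P$ is not, and a careful check of the quadratic Taylor remainder bound for $\lvert1+w\rvert$ with $\lvert w\rvert\le1/4$. The large regime is where the dependence on $L$ is genuinely lost, and the threshold $1/(4\sqrt L)$ is chosen so that the two regimes meet with the stated constant $144$.
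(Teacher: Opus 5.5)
Your proof is correct and is essentially the paper's argument. It uses the same threshold $1/(4\sqrt L)$, the same equipartition $\lVert\operatorname{Re}P\rVert^2=\tfrac12\lVert P\rVert^2$ coming from $\int P^2\,\mathrm{d}\sigma=0$, and Lemma~\ref{lem:safe-square} in the small regime. In the large regime it reaches the same bound $\lVert\rho\circ P\rVert\ge\sqrt{1+x^2}-1$, which the paper obtains from $\lvert1+P\rvert^2-1=(\lvert1+P\rvert+1)(\lvert1+P\rvert-1)$ and Cauchy--Schwarz rather than from your Minkowski step.

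The one thing to tighten is the remainder bound you attribute to a Taylor expansion. It follows at once from the exact identity $\lvert1+w\rvert-1-\operatorname{Re}w=\operatorname{Im}(w)^2/(\lvert1+w\rvert+1+\operatorname{Re}w)\ge0$ used in the paper. Alternatively, concavity of the square root gives $0\le\lvert1+w\rvert-1-\operatorname{Re}w\le\lvert w\rvert^2/2$ for all $w$.
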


\begin{proof}
Write $\alpha = \lVert P \rVert_{L^2}$ and $\beta = \lVert \rho \circ P \rVert_{L^2}$.
We show that $\beta \ge \alpha/(12\sqrt{L})$, which gives the result after
squaring. To prove this estimate, we will consider two different regimes; namely, the one where $\alpha$ is sufficiently small and its complement. The threshold between the two regimes will be taken to be
$\alpha=\alpha_0 = 1/(4\sqrt{L})$.
 \vspace{0.5em}

Two structural properties of $P$ underpin both cases.  First,
by Cauchy--Schwarz and Parseval we have the pointwise bound
$\lvert P(t) \rvert \le \sqrt{L}\, \alpha$ for all~$t$.
Second, since every $n \in E$ satisfies $n \ge 1$, the zeroth Fourier
coefficient of $P$ vanishes, i.e.,
$\int_{S^1} P\,\mathrm{d}\sigma = 0$.
More importantly, we have $n + m \ge 2$ for all $n, m \in E$, so
$\int_{S^1} P^2\,\mathrm{d}\sigma = 0$ as well.  Writing $P = u + \mathrm{i} v$ with
$u, v$ real-valued, this identity splits as
\begin{equation}\label{eq:equal-mass}
\int_{S^1} u^2\,\mathrm{d}\sigma
= \int_{S^1} v^2\,\mathrm{d}\sigma
= \frac{\alpha^2}{2}
\qquad\text{and}\qquad
\int_{S^1} uv\,\mathrm{d}\sigma = 0.
\end{equation}
The equipartition of mass between the real and imaginary parts of $P$ given by \eqref{eq:equal-mass} will be crucially used in the estimates below.
 \vspace{0.5em}

\noindent
\textbf{Small amplitudes} ($\alpha \le \alpha_0$).
Here, we have $\lVert P \rVert_\infty \le \sqrt{L}\,\alpha \le 1/4$.  For $\lvert w \rvert \le 1/4$
we also have $\lvert 1+w \rvert \ge 3/4$ and $\operatorname{Re}(w) \ge -1/4$, and therefore
$\lvert 1+w \rvert + 1 + \operatorname{Re}(w) \ge 3/2 > 0$.  The algebraic identity
\[
\lvert 1+w \rvert - 1 - \operatorname{Re}(w)
= \frac{\operatorname{Im}(w)^2}{\lvert 1+w \rvert + 1 + \operatorname{Re}(w)}
\]
shows that $\lvert 1+w \rvert - 1 = \operatorname{Re}(w) + \varepsilon$ with
$0 \le \varepsilon \le \operatorname{Im}(w)^2/(3/2) \le \lvert w \rvert^2$.  In particular,
$\rho(w) = \bigl\lvert \operatorname{Re}(w) + \varepsilon\bigr\rvert \ge
\lvert \operatorname{Re}(w) \rvert - \varepsilon$, so by 
Lemma~\ref{lem:safe-square} we have
\[
\rho(P)^2 \;\ge\; \tfrac{1}{2}\, u^2 - \lvert P \rvert^4.
\]
In other words, when $P$ is small in amplitude, most of the mass of $\rho(P)$ comes from the real part of $P$. One might a priori worry about the hypothetical possibility that if $P(t)$ were allowed to be purely imaginary for most $t$, the nonlinear function $\rho(P(t))$ might be small relative to $P(t)$. However, integrating the above and using property~\eqref{eq:equal-mass}, we see that we indeed have the coercive bound
$\beta^2 \ge \alpha^2/4 - \lVert P \rVert_\infty^2\, \alpha^2 \ge \alpha^2/4 - L\,\alpha^4$.
Since $L\alpha^2 \le 1/16$, we get $\beta^2 \ge 3\alpha^2/16$, so
$\beta \ge \sqrt{3}\,\alpha/4 \ge \alpha/(12\sqrt{L})$.
 \vspace{0.5em}

\noindent
\textbf{Large amplitudes} ($\alpha > \alpha_0$).
The fact that $P$ has mean zero (since it is supported on positive frequencies) also gives
$\int_{S^1}\lvert 1+P \rvert^2\,\mathrm{d}\sigma = 1 + \alpha^2$, so
\[
\alpha^2
= \int_{S^1}\bigl(\lvert 1+P \rvert^2 - 1\bigr)\,\mathrm{d}\sigma
\le \int_{S^1} \rho(P)\,\bigl(\lvert 1+P \rvert + 1\bigr)\,\mathrm{d}\sigma,
\]
where we used the identity $\bigl\lvert \lvert 1+P \rvert^2 - 1\bigr\rvert
= \bigl(\lvert 1+P \rvert + 1\bigr)\rho(P)$.  By Cauchy--Schwarz,
\[
\alpha^2 \;\le\; \beta \cdot \bigl(\sqrt{1+\alpha^2} + 1\bigr).
\]
If $\alpha \le 1$, then $\beta \ge \alpha^2/3$ and $\alpha > \alpha_0 = 1/(4\sqrt{L})$ gives
$\beta \ge \alpha/(12\sqrt{L})$.
If $\alpha > 1$, then $\beta \ge \alpha/3 \ge \alpha/(12\sqrt{L})$.
In both regimes, we have $\beta \ge \alpha/(12\sqrt{L})$.
\end{proof}

\subsubsection{Averaging over circles}\label{sec:rot-avg}
The local circle estimate grows with the number of frequencies~$L$.  To
eliminate this dependency for high-frequency polynomials, we will need 
to use the fact that the $\rho$-image of the circle sees a
quantitative fraction of its radius. More precisely, we have the following proposition.

\begin{proposition}\label{prop:rot-avg}
For all $r \ge 0$, we have
\[
\int_{S^1} \rho(r\mathrm{e}^{\mathrm{i}\theta})^2\,\mathrm{d}\sigma(\theta)
\;\ge\; \frac{r^2}{8}.
\]
\end{proposition}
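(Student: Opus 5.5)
We need $\int \rho(re^{i\theta})^2\,d\sigma \ge r^2/8$, where $\rho(w)=\bigl||1+w|-1\bigr|$. Write $|1+re^{i\theta}|^2 = 1+2r\cos\theta + r^2$. The plan is to bound $\rho$ from below pointwise by an explicit expression in $\cos\theta$, and then integrate.

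\textbf{Pointwise bound.} Use the identity
\[
\rho(w)=\frac{\bigl||1+w|^2-1\bigr|}{|1+w|+1}.
\]
This gives
\[
\rho(re^{i\theta}) = \frac{|r^2+2r\cos\theta|}{|1+re^{i\theta}|+1} \ge \frac{r\,|r+2\cos\theta|}{r+2},
\]
since $|1+re^{i\theta}|\le 1+r$.

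\textbf{Reduction to an average in $\theta$.} Squaring and averaging,
\[
\int \rho^2\,d\sigma \ge \frac{r^2}{(r+2)^2}\int (r+2\cos\theta)^2\,d\sigma = \frac{r^2(r^2+2)}{(r+2)^2},
\]
because $\int\cos\theta\,d\sigma=0$ and $\int\cos^2\theta\,d\sigma=1/2$. It therefore suffices to show
\[
8(r^2+2)\ge (r+2)^2 = r^2+4r+4,
\]
that is, $7r^2-4r+12\ge 0$. This quadratic has discriminant $16-336<0$, so it holds for every $r\ge 0$, with room to spare. The case $r=0$ is trivial.

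\textbf{Main obstacle.} The only step needing care is the denominator bound. The denominator $|1+w|+1$ can be as large as $r+2$, and it is uniform in $\theta$, so the bound is legitimate. The identity for $\rho$ also holds when $|1+w|=0$, because the denominator is then $1$. No case split is needed, and the constant $1/8$ is comfortably achieved; the worst ratio occurs near $r\approx 2/7$.
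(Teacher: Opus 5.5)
Your proof is correct. It starts exactly as the paper's does, with the identity $\rho(w)=\bigl\lvert \lvert 1+w\rvert^2-1\bigr\rvert/(\lvert 1+w\rvert+1)$ and the uniform bound $\lvert 1+r\mathrm{e}^{\mathrm{i}\theta}\rvert+1\le r+2$. The two arguments differ only in how this pointwise bound is integrated.

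The paper restricts to the arc $\lvert\theta\rvert\le\pi/4$. There $\cos\theta\ge 1/\sqrt{2}$, so the numerator $r^2+2r\cos\theta$ is nonnegative and $\rho(r\mathrm{e}^{\mathrm{i}\theta})\ge r/\sqrt{2}$ pointwise. Since the arc has $\sigma$-measure $1/4$, this gives exactly $r^2/8$ with no trigonometric integrals at all.

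You instead square first, which removes the sign issue with $r+2\cos\theta$, and then average over the whole circle using $\int\cos\theta\,\mathrm{d}\sigma=0$ and $\int\cos^2\theta\,\mathrm{d}\sigma=1/2$. This costs one small computation but gives a better constant. Since $(r^2+2)/(r+2)^2\ge 1/3$, which is equivalent to $2(r-1)^2\ge 0$, your argument actually proves $\int\rho(r\mathrm{e}^{\mathrm{i}\theta})^2\,\mathrm{d}\sigma\ge r^2/3$. It also captures the correct leading behaviour $r^2/2$ as $r\to 0$, which the arc restriction throws away.

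One slip in your closing remark, which does not affect the proof: the worst case of the ratio $8(r^2+2)/(r+2)^2$ is at $r=1$, with value $8/3$, not near $r=2/7$. The point $r=2/7$ only minimizes the difference $7r^2-4r+12$.
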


\begin{proof}
We claim that on the arc $\lvert \theta \rvert \le \pi/4$ we have $\rho(r\mathrm{e}^{\mathrm{i}\theta}) \ge r/\sqrt{2}$. Indeed,
$\cos\theta \ge 1/\sqrt{2}$ gives
\[
\lvert 1 + r\mathrm{e}^{\mathrm{i}\theta} \rvert - 1
= \frac{r^2 + 2r\cos\theta}{\lvert 1+r\mathrm{e}^{\mathrm{i}\theta} \rvert + 1}
\ge \frac{r(r + \sqrt{2})}{r + 2}
\ge \frac{r}{\sqrt{2}}.
\]
 Since the numerator
is non-negative, we have $\lvert 1 + r\mathrm{e}^{\mathrm{i}\theta} \rvert \geq 1$ and hence $\rho$ equals
$\lvert 1+r\mathrm{e}^{\mathrm{i}\theta} \rvert - 1$ without the absolute value.  Restricting the
integral to this arc, we have
$\int \rho^2\,\mathrm{d}\sigma \ge \frac{1}{4}(r/\sqrt{2})^2 = r^2/8$.
\end{proof}

\subsubsection{The high-frequency estimate}\label{sec:high-freq}

When the polynomial oscillates rapidly, the Lipschitz property
of~$\rho$ and 
Proposition~\ref{prop:rot-avg} combine to give a much stronger bound.  The idea
is to factor $P(t) = \mathrm{e}^{\mathrm{i} Nt}\, Q(t)$ where $Q$ is a slowly varying
envelope. On short intervals, the phase $\mathrm{e}^{\mathrm{i} Nt}$ sweeps through
a full rotation while $Q$ barely changes. Proposition~\ref{prop:rot-avg} then
ensures that $\rho$ captures a positive fraction of $\lvert Q \rvert^2$.  We make this intuition precise in the estimate below.

\begin{proposition}[High-frequency estimate]\label{prop:high-freq} Let $N \ge 1$ and $L \ge 1$ satisfy $1343\, L^2 \le N^2$.
For any $b_0, \dots, b_{L-1} \in \mathbb{C}$, the polynomial
$P(t) = \sum_{m=0}^{L-1} b_m\, \mathrm{e}^{\mathrm{i}(N+m)t}$ satisfies
\[
\lVert P \rVert_{L^2}^2
\;\le\;
32\, \lVert \rho \circ P \rVert_{L^2}^2.
\]
\end{proposition}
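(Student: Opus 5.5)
We factor $P(t)=\mathrm{e}^{\mathrm{i}Nt}Q(t)$ with $Q(t)=\sum_{m=0}^{L-1}b_m\mathrm{e}^{\mathrm{i}mt}$, so that $\lvert P\rvert=\lvert Q\rvert$ and $\lVert P\rVert_{L^2}=\lVert Q\rVert_{L^2}$. Partition the circle into $N$ consecutive intervals $I_j=[2\pi j/N,2\pi(j+1)/N)$. On each $I_j$ the phase $\mathrm{e}^{\mathrm{i}Nt}$ makes exactly one full turn, while $Q$ varies little. The plan is to freeze $Q$ at a point of $I_j$ and compare $\rho(P(t))$ with $\rho(\mathrm{e}^{\mathrm{i}Nt}Q(t_j))$. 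Proposition~\ref{prop:rot-avg} then gives a lower bound for the frozen quantity, and the Lipschitz bound \eqref{eq:rho-lip} controls the error.

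\textbf{Step 1 (freezing).} For $t\in I_j$ and any $s_j\in I_j$, \eqref{eq:rho-lip} gives $\rho(P(t))\ge \rho(\mathrm{e}^{\mathrm{i}Nt}Q(s_j))-\lvert Q(t)-Q(s_j)\rvert$. Lemma~\ref{lem:safe-square} upgrades this to
\[
\rho(P(t))^2\ge \tfrac12\rho(\mathrm{e}^{\mathrm{i}Nt}Q(s_j))^2-\lvert Q(t)-Q(s_j)\rvert^2 .
\]
Integrate over $t\in I_j$. The substitution $\theta=Nt+\arg Q(s_j)$ covers the circle exactly once, so Proposition~\ref{prop:rot-avg} yields
\[
\int_{I_j}\rho(\mathrm{e}^{\mathrm{i}Nt}Q(s_j))^2\,\mathrm{d}\sigma\ge \tfrac{1}{8N}\lvert Q(s_j)\rvert^2 .
\]

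\textbf{Step 2 (choosing $s_j$ and summing).} We average the inequality from Step 1 over $s_j\in I_j$ against $N\,\mathrm{d}\sigma(s_j)$ rather than choosing a single point. Summing over $j$, the main term becomes $\tfrac1{16}\lVert Q\rVert_{L^2}^2$. The error term becomes
\[
\sum_j N\int_{I_j}\int_{I_j}\lvert Q(t)-Q(s)\rvert^2\,\mathrm{d}\sigma\,\mathrm{d}\sigma .
\]
Writing $Q(t)-Q(s)=\int_s^t Q'$ and applying Cauchy--Schwarz on an interval of length $2\pi/N$ bounds this by a constant times $N^{-2}\lVert Q'\rVert_{L^2}^2$; the constant is about $(2\pi)^2/6$ or so. Bernstein's inequality (or simply Parseval, since the frequencies of $Q$ lie in $[0,L-1]$) gives $\lVert Q'\rVert_{L^2}\le L\lVert Q\rVert_{L^2}$. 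So the error is at most $c\,(L/N)^2\lVert Q\rVert_{L^2}^2$ with an explicit $c\approx (2\pi)^2/6\cdot$(small factor).

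\textbf{Step 3 (absorbing).} The hypothesis $1343L^2\le N^2$ makes $c(L/N)^2\le 1/32$. This yields $\lVert\rho\circ P\rVert^2\ge(\tfrac1{16}-\tfrac1{32})\lVert Q\rVert^2=\tfrac1{32}\lVert P\rVert^2$, as required.

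The main obstacle is bookkeeping the constant in Step 2 so that it lands exactly under $1343$. This depends on how sharply the Poincaré-type estimate $\iint_{I\times I}\lvert Q(t)-Q(s)\rvert^2\le C\lvert I\rvert^2\int_I\lvert Q'\rvert^2$ is taken (normalizations of $\mathrm{d}\sigma$ matter). If needed, the cruder bound $\lvert Q(t)-Q(s)\rvert\le\lvert t-s\rvert^{1/2}(\int_{I_j}\lvert Q'\rvert^2\,\mathrm{d}t)^{1/2}$ still gives some constant of order $(2\pi)^2$, which is consistent with the stated $1343\approx 32\cdot 4\pi^2+\dots$.
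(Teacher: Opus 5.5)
Your proposal is correct and follows essentially the paper's proof: factor out $\mathrm{e}^{\mathrm{i}Nt}$, freeze $Q$ on each of the $N$ arcs, apply Proposition~\ref{prop:rot-avg}, then use Lemma~\ref{lem:safe-square} and Bernstein. The only difference is that the paper freezes at the arc mean $q_k$ and bounds $\delta=\sum_k\int_{J_k}\lvert Q-q_k\rvert^2\,\mathrm{d}\sigma$ by a Poincar\'e inequality with constant $h^2$, obtaining $\lVert\rho\circ P\rVert^2\ge\frac{1}{16}\lVert Q\rVert^2-\frac{17}{16}\delta$, whereas averaging over freezing points gives you $\frac{1}{16}\lVert Q\rVert^2-E$ with error term $E=2\delta$, which you bound directly. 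On the bookkeeping, your Cauchy--Schwarz step actually gives $E\le\frac{4\pi^2}{3}\frac{L^2}{N^2}\lVert Q\rVert^2$ (the $(2\pi)^2/6$ you quote is the constant for $\delta$, not for $E=2\delta$), and even the crudest version $\lvert t-s\rvert\le 2\pi/N$ gives $E\le 4\pi^2\frac{L^2}{N^2}\lVert Q\rVert^2$, which closes because $128\pi^2<1343$; by contrast, inserting the paper's constant-$1$ Poincar\'e bound into $2\delta$ would need $N^2\ge 256\pi^2 L^2\approx 2527\,L^2$, so your direct estimate is what makes the stated threshold suffice.
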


\begin{proof}
Write $P(t) = \mathrm{e}^{\mathrm{i} Nt}\,Q(t)$ with
$Q(t) = \sum_{m=0}^{L-1} b_m\,\mathrm{e}^{\mathrm{i} mt}$, so $\lVert P \rVert = \lVert Q \rVert$.
Partition the circle into $N$ equal arcs
$J_k = [2\pi k/N,\; 2\pi(k+1)/N)$, and let
$q_k = N\!\int_{J_k}\! Q\,\mathrm{d}\sigma$ be the average of $Q$ on~$J_k$.
 \vspace{0.5em}

On each interval $J_k$ of length $h = 2\pi/N$, the Poincar\'e
inequality gives
$\int_{J_k}\lvert Q - q_k \rvert^2\,\mathrm{d}\sigma \le h^2\int_{J_k}\lvert Q' \rvert^2\,
\mathrm{d}\sigma$.\footnote{We use the inequality
$\int_0^h \lvert f - \bar f \rvert^2\,\mathrm{d} x
\le h^2\int_0^h\lvert f' \rvert^2\,\mathrm{d} x$
with the non-sharp constant~$1$; here $\bar f$ denotes the mean of~$f$
on~$[0,h]$.  The sharp 
constant is $1/\pi^2$. After the conversion of the Lean code to natural language, we saw that the LLMs emphasized that this better estimate would have improved the threshold from $1343$ to~$136$ but they did not use it as we insisted that the proof be as elementary as possible.}
Summing over $k$ and using Bernstein's inequality $\lVert Q' \rVert^2 \le L^2\lVert Q \rVert^2$
(since every frequency of~$Q$ has modulus at most~$L{-}1 < L$) we see that
\begin{equation}\label{eq:delta}
\delta \;:=\; \sum_{k} \int_{J_k}\lvert Q - q_k \rvert^2\,\mathrm{d}\sigma
\;\le\; \frac{4\pi^2 L^2}{N^2}\,\lVert Q \rVert^2.
\end{equation}
Note that
\begin{equation}\label{eq:parseval-partition}
\lVert Q \rVert^2
= \frac{1}{N}\sum_{k}\lvert q_k \rvert^2 + \delta.
\end{equation}
As $t$ traverses $J_k$, the argument $Nt$ traverses one full period.
Substituting $s = Nt$ and applying Proposition~\ref{prop:rot-avg} with
$r = \lvert q_k \rvert$ we obtain
\[
\int_{J_k}\rho\bigl(\mathrm{e}^{\mathrm{i} Nt} q_k\bigr)^2\,\mathrm{d}\sigma
= \frac{1}{N}\int_{S^1}\rho\bigl(\lvert q_k \rvert\mathrm{e}^{\mathrm{i} s}\bigr)^2\,\mathrm{d}\sigma(s)
\;\ge\; \frac{\lvert q_k \rvert^2}{8N}.
\]
On $J_k$, the Lipschitz bound~\eqref{eq:rho-lip} gives
$\rho(P) = \rho(\mathrm{e}^{\mathrm{i} Nt}Q)
\ge \rho(\mathrm{e}^{\mathrm{i} Nt}q_k) - \lvert Q - q_k \rvert$.
By Lemma~\ref{lem:safe-square},
$\rho(P)^2 \ge \tfrac{1}{2}\rho(\mathrm{e}^{\mathrm{i} Nt}q_k)^2 - \lvert Q - q_k \rvert^2$.
Integrating over $J_k$ and using the above we obtain
\[
\int_{J_k}\rho(P)^2\,\mathrm{d}\sigma
\;\ge\; \frac{\lvert q_k \rvert^2}{16N} - \int_{J_k}\lvert Q-q_k \rvert^2\,\mathrm{d}\sigma.
\]
Summing over $k$ and applying~\eqref{eq:parseval-partition}, it follows that
\[
\lVert \rho \circ P \rVert^2
\;\ge\; \frac{1}{16}\Bigl(\lVert Q \rVert^2 - \delta\Bigr) - \delta
= \frac{\lVert Q \rVert^2}{16} - \frac{17\delta}{16}.
\]
Substituting the bound~\eqref{eq:delta}, we arrive at
\[
\lVert \rho \circ P \rVert^2
\;\ge\;
\frac{1}{16}\Bigl(1 - \frac{68\pi^2 L^2}{N^2}\Bigr)\lVert Q \rVert^2.
\]
The condition $1343\,L^2 \le N^2$ gives $68\pi^2 L^2/N^2 \le 1/2$
(since $136\pi^2 < 1343$), and therefore
$\lVert \rho \circ P \rVert^2 \ge \lVert P \rVert^2/32$.
\end{proof}

\subsection{Frequency localization}\label{subsec:blocks}
The circle estimates from Section~\ref{subsec:circle} require the trigonometric
polynomial to have a controlled number of frequencies, yet $U$ may have
arbitrarily many.  The Gaussian weight is what resolves this issue, as the radial factor
$r \mapsto r^{2n+1}\mathrm{e}^{-r^2}$ concentrates sharply near $r = \sqrt{n + 1/2}$. This will allow us to group frequencies of $U$ into ``blocks''
approximately supported on disjoint annuli.
 \vspace{0.5em}

We begin with some general notation. For $\ell \ge 1$, we define 
\[
I_\ell = \bigl\{n \in \mathbb{N} : \ell^2 \le n < (\ell+1)^2\bigr\},
\]
which has $\lvert I_\ell \rvert = 2\ell+1$ elements. We then define 
$U_\ell(z) = \sum_{n \in I_\ell,\, n \le D} a_n z^n$. The blocks $I_1, I_2, \dots$ are pairwise disjoint and cover
$\{1, \dots, D\}$ up to index $\Lambda = \lfloor\sqrt{D}\rfloor$.  Clearly, we have
\begin{equation}\label{eq:fock-decomp}
\lVert U \rVert_{\mathcal{F}}^2 = \sum_{\ell=1}^{\Lambda} \lVert U_\ell \rVert_{\mathcal{F}}^2.
\end{equation}
Similarly, on each circle of radius~$r$, blocks with distinct frequency
supports are orthogonal, so we have
$\int_{S^1} U_{\ell_1}(r\mathrm{e}^{\mathrm{i} t})\,
\overline{U_{\ell_2}(r\mathrm{e}^{\mathrm{i} t})}\,\mathrm{d}\sigma(t) = 0$
for $\ell_1 \ne \ell_2$. Given $n\in I_\ell$ with $\ell \geq 1$, we define the radius $r_n = \sqrt{n+1/2}\in (\ell, \ell+1)$.  \vspace{0.5em}

Fix  $M \ge 1$.  For each index $j \ge 0$ we define 
\[
V_j = \sum_{\substack{1 \le \ell \le \Lambda \\ \lvert \ell - j \rvert \le M}} U_\ell
\qquad\text{and} \qquad
R_j = U - V_j.
\]
Note that $V_j$ gathers together the blocks whose peaks fall within $M$ steps of
annulus~$j$. For $j \ge M + 1$, the frequency support of $V_j$ lies in
$[(j{-}M)^2,\; (j{+}M{+}1)^2)$, and the number of frequencies is
$L_j = (2M{+}1)(2j{+}1)$.
 \vspace{0.5em}

The mass that a block $U_\ell$ contributes to a distant annulus decays
like $\mathrm{e}^{-(\lvert j-\ell \rvert-1)^2}$. Indeed, the function
$\varphi_n(r) = (2n{+}1)\log r - r^2$ (defined so that
$r^{2n+1}\mathrm{e}^{-r^2} = \mathrm{e}^{\varphi_n(r)}$) has a unique maximum at
$r_n$ and satisfies $\varphi_n''(r) \le -2$, which gives the
 bound
\begin{equation}\label{eq:phi-quad}
\mathrm{e}^{\varphi_n(r)}
\;\le\;
\mathrm{e}^{\varphi_n(r_n)}\, \mathrm{e}^{-(r - r_n)^2}.
\end{equation}
For $n \ge 1$, it is easy to see by using Stirling’s approximation that
$\mathrm{e}^{\varphi_n(r_n)} \le \mathrm{e}^{1/4}\, n!/2$.
Combining this with \eqref{eq:phi-quad},  we see that for $r \in
[j, j{+}1]$ and $n \in I_\ell$, we have
\begin{equation}\label{eq:monomial-bound}
r^{2n+1}\mathrm{e}^{-r^2}
\;\le\;
\frac{\mathrm{e}^{1/4}\, n!}{2}\,
\mathrm{e}^{-\operatorname{dist}(r_n,\,[j,j{+}1])^2},
\end{equation}
where $\operatorname{dist}(r_n, [j,j{+}1]) = \max(j - r_n,\, r_n - (j{+}1),\, 0)$.
By definition, $r_n \in (\ell, \ell{+}1)$, so this distance is at
least $(\lvert j - \ell \rvert - 1)_+$. Our next objective is to quantify the ``leakage" between the blocks.

\begin{proposition}\label{prop:single-leak}

For $\ell \ge 1$ and $j \ge 0$, we have
\[
\frac{1}{\pi}\int_{\{j \le \lvert z \rvert < j+1\}}
\lvert U_\ell(z) \rvert^2\, \mathrm{e}^{-\lvert z \rvert^2}\,\mathrm{d} m(z)
\;\le\;
\mathrm{e}^{1/4}\,
\mathrm{e}^{-(\lvert j-\ell \rvert-1)_+^2}\,
\lVert U_\ell \rVert_{\mathcal{F}}^2.
\]
\end{proposition}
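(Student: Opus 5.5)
The plan is to pass to polar coordinates via \eqref{eq:polar} and use orthogonality of monomials on each circle, so that the annular integral of $\lvert U_\ell\rvert^2$ splits into a sum of single-monomial contributions. Each of these is then bounded by \eqref{eq:monomial-bound}.

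\emph{Orthogonal expansion.} Writing $U_\ell(z)=\sum_{n\in I_\ell,\,n\le D} a_n z^n$, on the circle of radius $r$ the functions $t\mapsto r^n\mathrm{e}^{\mathrm{i} nt}$ are orthogonal in $L^2(S^1,\mathrm{d}\sigma)$. Hence
\[
\int_{S^1}\lvert U_\ell(r\mathrm{e}^{\mathrm{i} t})\rvert^2\,\mathrm{d}\sigma(t)=\sum_{n\in I_\ell,\,n\le D}\lvert a_n\rvert^2 r^{2n}.
\]
By \eqref{eq:polar} restricted to the annulus, the left-hand side of the proposition equals
\[
\sum_{n}\lvert a_n\rvert^2\cdot 2\int_j^{j+1} r^{2n+1}\mathrm{e}^{-r^2}\,\mathrm{d} r .
\]

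\emph{Single-monomial bound.} For each $n\in I_\ell$ and $r\in[j,j+1]$, \eqref{eq:monomial-bound} gives
\[
r^{2n+1}\mathrm{e}^{-r^2}\le \tfrac{\mathrm{e}^{1/4} n!}{2}\,\mathrm{e}^{-\operatorname{dist}(r_n,[j,j+1])^2}.
\]
Since $r_n\in(\ell,\ell+1)$, the distance is at least $(\lvert j-\ell\rvert-1)_+$. As $x\mapsto \mathrm{e}^{-x^2}$ is decreasing on $[0,\infty)$, the exponential factor is at most $\mathrm{e}^{-(\lvert j-\ell\rvert-1)_+^2}$. Integrating over an interval of length $1$ then gives
\[
2\int_j^{j+1} r^{2n+1}\mathrm{e}^{-r^2}\,\mathrm{d} r\le \mathrm{e}^{1/4}\, n!\,\mathrm{e}^{-(\lvert j-\ell\rvert-1)_+^2}.
\]
Summing over $n$ against $\lvert a_n\rvert^2$, and using $\lVert U_\ell\rVert_{\mathcal F}^2=\sum\lvert a_n\rvert^2 n!$, yields the claim. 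The case $j=0$ is covered because the distance bound still holds there.

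\emph{Main obstacle.} Once \eqref{eq:monomial-bound} is granted, the argument is bookkeeping. The real content lies in \eqref{eq:monomial-bound} itself, namely the two inputs behind it:
\begin{itemize}
\item the concavity bound $\varphi_n''\le-2$, which holds since $\varphi_n''(r)=-(2n+1)/r^2-2$, giving \eqref{eq:phi-quad} by Taylor expansion about the critical point $r_n$;
\item the Stirling-type estimate $\mathrm{e}^{\varphi_n(r_n)}=(n+\tfrac12)^{n+1/2}\mathrm{e}^{-(n+1/2)}\le \mathrm{e}^{1/4}n!/2$.
\end{itemize}
If I needed to verify the latter, I would compare $(n+\tfrac12)^{n+1/2}\mathrm{e}^{-n-1/2}$ with the lower Stirling bound $n!\ge\sqrt{2\pi n}\,n^n\mathrm{e}^{-n}$. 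This reduces the claim to $(1+\tfrac1{2n})^{n+1/2}\mathrm{e}^{-1/2}\le \mathrm{e}^{1/4}\sqrt{2\pi}/2$, which is elementary because $(1+\tfrac1{2n})^{n}\le \mathrm{e}^{1/2}$.
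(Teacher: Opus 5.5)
Your proposal is correct and follows the paper's proof essentially verbatim. Polar coordinates and orthogonality of monomials on each circle reduce the left-hand side to $2\sum_{n\in I_\ell}\lvert a_n\rvert^2\int_j^{j+1}r^{2n+1}\mathrm{e}^{-r^2}\,\mathrm{d}r$, and each radial integral is then bounded by \eqref{eq:monomial-bound} together with $\operatorname{dist}(r_n,[j,j+1])\ge(\lvert j-\ell\rvert-1)_+$. Your extra check of the ingredients of \eqref{eq:monomial-bound} is also correct: you verify $\varphi_n''\le -2$, and your Stirling comparison reduces the remaining bound to $(1+\tfrac{1}{2n})^{n+1/2}\mathrm{e}^{-1/2}\le \mathrm{e}^{1/4}\sqrt{2\pi}/2$. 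This fills in a step the paper only asserts.
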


\begin{proof}
Passing to polar coordinates and using the orthogonality of distinct
monomials on each circle, the left-hand side equals
$2\sum_{n \in I_\ell}\lvert a_n \rvert^2 \int_j^{j+1} r^{2n+1}\mathrm{e}^{-r^2}\mathrm{d} r$.
 Using~\eqref{eq:monomial-bound} to bound each radial integral  and summing gives the result.
\end{proof}

By summing over all blocks, we arrive at the following corollary.
\begin{corollary}\label{cor:total-leak} For $M \ge 2$, define $\eta_M = 2\mathrm{e}^{1/4}\sum_{m \ge M}\mathrm{e}^{-m^2}$.  Then
\[
\sum_{j \ge 0}\;
\frac{1}{\pi}\int_{\{j \le \lvert z \rvert < j+1\}}
\lvert R_j(z) \rvert^2\, \mathrm{e}^{-\lvert z \rvert^2}\,\mathrm{d} m(z)
\;\le\;
\eta_M \cdot \lVert U \rVert_{\mathcal{F}}^2.
\]
\end{corollary}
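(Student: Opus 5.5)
Since $R_j = U - V_j = \sum_{\ell:\,\lvert \ell - j\rvert > M} U_\ell$, we will bound $\lvert R_j \rvert^2$ on the annulus $A_j = \{j \le \lvert z \rvert < j+1\}$ using orthogonality on circles rather than the triangle inequality. In polar coordinates via \eqref{eq:polar}, the circle integral of $\lvert R_j(r\mathrm{e}^{\mathrm{i}t})\rvert^2$ equals $\sum_{\ell:\lvert \ell-j\rvert>M}\int_{S^1}\lvert U_\ell(r\mathrm{e}^{\mathrm{i}t})\rvert^2\,\mathrm{d}\sigma$, because distinct blocks have disjoint frequency supports. Hence
\[
\frac{1}{\pi}\int_{A_j}\lvert R_j\rvert^2\mathrm{e}^{-\lvert z\rvert^2}\,\mathrm{d}m
=\sum_{\substack{1\le \ell\le\Lambda\\ \lvert \ell-j\rvert>M}}\frac{1}{\pi}\int_{A_j}\lvert U_\ell\rvert^2\mathrm{e}^{-\lvert z\rvert^2}\,\mathrm{d}m .
\]

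Apply Proposition~\ref{prop:single-leak} to each term to bound it by $\mathrm{e}^{1/4}\mathrm{e}^{-(\lvert j-\ell\rvert-1)_+^2}\lVert U_\ell\rVert_{\mathcal{F}}^2$. Sum over $j\ge 0$ and interchange the sums, so that the left-hand side is at most
\[
\mathrm{e}^{1/4}\sum_{\ell=1}^{\Lambda}\lVert U_\ell\rVert_{\mathcal{F}}^2\sum_{j\ge0:\,\lvert j-\ell\rvert\ge M+1}\mathrm{e}^{-(\lvert j-\ell\rvert-1)^2}.
\]
For fixed $\ell$, the inner sum runs over $j=\ell\pm k$ with $k\ge M+1$, at most two values of $j$ per $k$. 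Setting $m=k-1\ge M$, it is at most $2\sum_{m\ge M}\mathrm{e}^{-m^2}$, uniformly in $\ell$. Multiplying by $\mathrm{e}^{1/4}$ gives exactly $\eta_M$, and \eqref{eq:fock-decomp} turns $\sum_\ell\lVert U_\ell\rVert_{\mathcal{F}}^2$ into $\lVert U\rVert_{\mathcal{F}}^2$.

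The only delicate point is the first step. Using $\lvert\sum U_\ell\rvert^2\le(\sum\ldots)$ via Cauchy--Schwarz would lose a factor, so the exact circle-wise orthogonality is essential. It holds because the weight $\mathrm{e}^{-r^2}$ and the annulus are radial, so the cross terms vanish after the angular integration. The rest is bookkeeping of the index shift $(\lvert j-\ell\rvert-1)_+\ge M$ whenever $\lvert j-\ell\rvert\ge M+1$. The hypothesis $M\ge2$ is not needed for the argument; it only keeps $\eta_M$ small for later use.
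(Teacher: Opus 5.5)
Your proof is correct and follows essentially the same route as the paper: you use exact orthogonality of the blocks on each circle, apply Proposition~\ref{prop:single-leak} termwise, interchange the $j$- and $\ell$-sums with the inner sum bounded by $2\sum_{m\ge M}\mathrm{e}^{-m^2}$, and conclude with \eqref{eq:fock-decomp}. Your side remark is also right: $M\ge 2$ is not needed here, since $\lvert j-\ell\rvert\ge M+1$ already makes the positive part in the exponent inactive.
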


\begin{proof}
By the orthogonality on each annulus,
$\int \lvert R_j \rvert^2 = \sum_{\lvert \ell-j \rvert>M}\int\lvert U_\ell \rvert^2$.
Now apply Proposition~\ref{prop:single-leak}, swap the order of summation, and note that for $\lvert j-\ell \rvert > M \ge 2$
the decay factor is $\mathrm{e}^{-(\lvert j-\ell \rvert-1)^2}$.  The inner sum over
$j$ contributes at most $2\sum_{m \ge M}\mathrm{e}^{-m^2}$ and multiplying by
$\mathrm{e}^{1/4}$ and using~\eqref{eq:fock-decomp} gives the result.
\end{proof}


We can now state the uniform estimate that holds on every annulus.  The
function $\zeta \mapsto V_j(r\zeta)$ is a trigonometric polynomial on $S^1$
with all frequencies in $\mathbb{Z}_{>0}$, and we can apply either
Proposition~\ref{prop:local-circle} or Proposition~\ref{prop:high-freq} depending
on~$j$.

\begin{proposition}\label{prop:annulus} With $M = 5$, we have for every $j \ge 0$ and every $r \in [j, j{+}1]$,
\[
\int_{S^1}\lvert V_j(r\zeta) \rvert^2\,\mathrm{d}\sigma(\zeta)
\;\le\;
1620^2 \cdot
\int_{S^1}\rho\bigl(V_j(r\zeta)\bigr)^2\,\mathrm{d}\sigma(\zeta).
\]
\end{proposition}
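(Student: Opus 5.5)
The plan is a case split on $j$ according to whether the frequency support of $V_j$ is high enough for the high-frequency estimate (Proposition~\ref{prop:high-freq}) to apply. Otherwise $V_j$ has boundedly many frequencies and the local circle estimate (Proposition~\ref{prop:local-circle}) suffices. Fix $r\in[j,j+1]$. The function $P(t)=V_j(r\mathrm{e}^{\mathrm{i}t})=\sum_n a_n r^n \mathrm{e}^{\mathrm{i}nt}$ is a trigonometric polynomial whose frequencies lie in $\bigcup_{|\ell-j|\le 5,\ \ell\ge1} I_\ell\subset\mathbb{Z}_{>0}$. Both circle estimates apply directly to $P$, with $b_n=a_nr^n$.

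\emph{High-frequency case.} For $j\ge M+1=6$ the support of $V_j$ lies in $[N,N+L)$ with $N=(j-5)^2$ and $L=(j+6)^2-(j-5)^2=11(2j+1)$. Missing frequencies are harmless: Proposition~\ref{prop:high-freq} allows arbitrary coefficients, including zeros. The hypothesis $1343L^2\le N^2$ reads $\sqrt{1343}\cdot 11(2j+1)\le (j-5)^2$. Since $\sqrt{1343}<37$, it suffices that $407(2j+1)\le (j-5)^2$, which holds for all $j\ge j_0$ with $j_0$ about $830$; I will compute the exact threshold, something like $j_0=830$. For these $j$ we get the constant $32\le 1620^2$.

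\emph{Low-frequency case.} For $j<j_0$ we bound the number of frequencies: $L\le |\bigcup_{|\ell-j|\le5}I_\ell|\le (j+6)^2\le (j_0+5)^2$. Proposition~\ref{prop:local-circle} then gives the constant $144L\le 144(j_0+5)^2$, and we need this to be at most $1620^2$, i.e.\ $12(j_0+5)\le1620$, i.e.\ $j_0\le130$. This is the main obstacle: the naive count is too crude by a large factor, so the bookkeeping must be sharpened. First, in the low case use the exact count $L_j=11(2j+1)$ (for $j\ge6$) rather than $(j+6)^2$; then $144\cdot 11(2j+1)\le1620^2$ holds for $j$ up to roughly $828$. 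Second, in the high case $L=11(2j+1)$ while $N=(j-5)^2$, so the crossover condition $\sqrt{1343}\cdot 11(2j+1)\le(j-5)^2$ holds roughly for $j\gtrsim 810$. The two ranges therefore overlap narrowly, and the constant $1620$ was presumably chosen exactly so that $144\cdot 11(2j_0+1)\le 1620^2$ at the crossover $j_0$. I will verify the integer arithmetic carefully here; if the margin is too thin, I will use $\sqrt{1343}<36.65$ to tighten the crossover.

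\emph{Small $j$.} For $j\le5$ the blocks $\ell\in[1,j+5]$ give at most $(j+6)^2-1\le 120$ frequencies. The constant is then $144\cdot120$, far below $1620^2$.

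Combining the three ranges yields the uniform constant $1620^2$ for every $j\ge0$ and every $r\in[j,j+1]$. Note that $r$ plays no role beyond rescaling the coefficients, which both circle estimates permit.
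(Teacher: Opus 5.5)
Your proposal is correct and is essentially the paper's proof. You split on $j$ and use the local circle estimate with the exact count $L_j=11(2j+1)$, which gives $144L_j\le 1620^2$ precisely for $j\le 827$; otherwise you use the high-frequency estimate with $N_j=(j-5)^2$, whereas the paper splits at $j\le 817$ versus $j\ge 818$ via $1343\cdot(11\cdot 1637)^2<813^4$. The arithmetic you left open does close, even with the crude bound $\sqrt{1343}<37$: $407(2j+1)\le (j-5)^2$ holds for all $j\ge 825$, so the two ranges overlap (your estimates ``about $830$'' and ``roughly $810$'' should read $825$ and $817$).
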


\begin{proof}
When $r = 0$ both sides vanish (all monomials have positive degree).
For $r > 0$, the function $\zeta \mapsto V_j(r\zeta)$ is a trigonometric
polynomial with positive frequencies and coefficients $a_n r^n$. We keep the peculiar case distinction made during the autoformalization.
 \vspace{0.7em}

\noindent
\emph{Case $j \le 817$.}\;
With $M = 5$, the number of frequencies satisfies $L_j \le 11(2 \cdot 817 + 1)
< 18{,}000$.  By Proposition~\ref{prop:local-circle},
$\lVert V_j(r\,\cdot\,) \rVert^2 \le 144\, L_j \cdot
\lVert \rho \circ V_j(r\,\cdot\,) \rVert^2$, and
$144 \cdot 18{,}000 < 1620^2$.
 \vspace{0.7em}

\noindent
\emph{Case $j \ge 818$.}\;
The lowest frequency is $N_j = (j{-}5)^2$ and we have
$L_j = 11(2j{+}1)$.  The ratio $L_j^2/N_j^2$ is decreasing in~$j$
(for $j \ge 6$), so it suffices to verify the condition
$1343\,L_j^2 \le N_j^2$ of Proposition~\ref{prop:high-freq} at $j = 818$.
A direct computation confirms that $1343 \cdot (11 \cdot 1637)^2 < 813^4$.
The high-frequency estimate then gives
$\lVert V_j(r\,\cdot\,) \rVert^2 \le 32\,\lVert \rho \circ V_j(r\,\cdot\,) \rVert^2
\le 1620^2\,\lVert \rho \circ V_j(r\,\cdot\,) \rVert^2$.
\end{proof} 
\subsection{Proof of the orthogonal coercivity estimate}\label{subsec:assembly} With the estimates from Sections~\ref{subsec:circle} and \ref{subsec:blocks} in hand, the proof of
Theorem~\ref{thm:orthog} is short.

\begin{proof}[Proof of Theorem~\ref{thm:orthog}]
Fix $M = 5$ throughout, and write $B = 1620$ for the constant from
Proposition~\ref{prop:annulus}.  Let $V_j$ and $R_j$ be as above.
 \vspace{0.5em}

\noindent
For $r \in [j, j{+}1]$, Proposition~\ref{prop:annulus} gives
\begin{equation}\label{eq:annulus-applied}
\int_{S^1}\lvert V_j(r\zeta) \rvert^2\,\mathrm{d}\sigma
\;\le\;
B^2\int_{S^1}\rho\bigl(V_j(r\zeta)\bigr)^2\,\mathrm{d}\sigma.
\end{equation}
Since $\rho$ is $1$-Lipschitz and $U = V_j + R_j$, we have
\[
\rho\bigl(V_j(z)\bigr)
\le \rho\bigl(U(z)\bigr) + \lvert R_j(z) \rvert.
\]
Using the inequality $(a+b)^2 \le 2a^2 + 2b^2$ and substituting
into~\eqref{eq:annulus-applied}, we see that
\begin{equation}\label{eq:Vj-to-U}
\int_{S^1}\lvert V_j(r\zeta) \rvert^2\,\mathrm{d}\sigma
\;\le\;
2B^2\!\int_{S^1}\rho(U(r\zeta))^2\,\mathrm{d}\sigma
\;+\; 2B^2\!\int_{S^1}\lvert R_j(r\zeta) \rvert^2\,\mathrm{d}\sigma.
\end{equation}
Applying $\lvert U \rvert^2 \le 2\lvert V_j \rvert^2 + 2\lvert R_j \rvert^2$
and~\eqref{eq:Vj-to-U}, we arrive at
\begin{equation}\label{eq:circle-ineq}
\int_{S^1}\lvert U(r\zeta) \rvert^2\,\mathrm{d}\sigma
\;\le\;
4B^2\!\int_{S^1}\rho(U)^2\,\mathrm{d}\sigma
\;+\; (4B^2 + 2)\!\int_{S^1}\lvert R_j \rvert^2\,\mathrm{d}\sigma.
\end{equation}

\noindent
We now multiply~\eqref{eq:circle-ineq} by $2r\mathrm{e}^{-r^2}$, integrate $r$ over
$[j, j{+}1]$, and use the polar decomposition~\eqref{eq:polar}.  The
$\rho(U)$ term on the right-hand side does not depend on~$j$, while the $R_j$ term does.  Summing over $j \ge 0$, we conclude that
\[
\lVert U \rVert_{\mathcal{F}}^2
\;\le\;
4B^2\,\lVert \rho(U) \rVert_{\mathcal{F}}^2
\;+\;
(4B^2 + 2)\sum_{j \ge 0}\;
\frac{1}{\pi}\int_{\{j \le \lvert z \rvert < j+1\}}\!\!
\lvert R_j \rvert^2\mathrm{e}^{-\lvert z \rvert^2}\mathrm{d} m.
\]
By Corollary~\ref{cor:total-leak} with $M = 5$, the above sum is at
most $\eta_5\,\lVert U \rVert_{\mathcal{F}}^2$.  Hence
\begin{equation}\label{eq:absorb}
\lVert U \rVert_{\mathcal{F}}^2
\;\le\;
4B^2\,\lVert \rho(U) \rVert_{\mathcal{F}}^2
\;+\;
(4B^2 + 2)\,\eta_5\,\lVert U \rVert_{\mathcal{F}}^2.
\end{equation}
We now absorb the additional term in the above inequality.
Since $4B^2 + 2 < 10^7$ and $\eta_5 < 4 \times 10^{-11}$, the coefficient
of $\lVert U \rVert_{\mathcal{F}}^2$ on the right-hand side of~\eqref{eq:absorb} satisfies
$(4B^2 + 2)\,\eta_5 < 4 \times 10^{-4} < \tfrac{1}{2}$.
Moving this term to the left-hand side and using $1 - (4B^2+2)\eta_5 > 1/2$, we conclude that
\[
\lVert U \rVert_{\mathcal{F}}^2
\;\le\; 8B^2\,\lVert \rho(U) \rVert_{\mathcal{F}}^2
\;=\; 8 \cdot 1620^2\,\lVert \rho(U) \rVert_{\mathcal{F}}^2
\;\le\; 4600^2\,\lVert \rho(U) \rVert_{\mathcal{F}}^2.\qedhere
\]
\end{proof}

\section{Comments, generalizations and extensions}
\subsection{Generalizations of Theorem \ref{main:local}} Here, we briefly discuss some of the ideas that allow one to generalize Theorem~\ref{main:local}. To motivate the structure of this subsection, it is instructive to explain the story of how we came to Theorem~\ref{main:local} and later Theorem~\ref{thm:local_Gen}.
 \vspace{0.5em}

The collaboration on this project began when the fourth author privately shared an unpublished proof of a weaker version of Theorem~\ref{main:local}; namely,   with $F$ replaced by $F^2$ in every instance. The proof of this weaker result is instructive and will be published elsewhere. However, despite extensive efforts, we were unable to extend the method to prove Theorem~\ref{main:local}. This necessitated the new method that we presented above.
\vspace{0.5em}

After establishing Theorem~\ref{main:local}, we asked the LLMs to generalize the method further, which eventually resulted in Theorem~\ref{thm:local_Gen}. Importantly, however, we did not immediately move from Theorem~\ref{main:local} to Theorem~\ref{thm:local_Gen}, but instead proceeded incrementally. 
\vspace{0.5em}

The first step of the generalization was to prove the analogue of Theorem~\ref{main:local} for the first Hermite window at the first canonical basis vector. After this, we extended the result to all Hermite windows at the first canonical basis vector, and later to all Hermite windows at all elements in the finite span of the canonical basis vectors. Finally, we repeated all of the above steps in higher dimensions. Along the way, conclusions from the previous step were Lean verified and then utilized as inspiration for the LLMs to proceed to the next generalization.
\vspace{0.5em}

The objective of this subsection will be to present some of the key ideas that arose while completing the above steps. These ideas mainly occurred during the first few generalizations, specifically when passing to the first Hermite window at the first canonical basis vector and to the Gaussian window in higher dimensions. The final proof of Theorem~\ref{thm:local_Gen} is a technical modification of variants of these ideas, and we have not been able to find deeper meaning in it. For this reason, we have not presented the full proof of Theorem~\ref{thm:local_Gen} in detail, as it does not seem instructive. 
\vspace{0.5em}

With the above being said, it may simply be that we have not yet found the \emph{right} proof of Theorem~\ref{thm:local_Gen}, or perhaps even the right statement. Indeed, given that we now have a verification of correctness of Theorem~\ref{thm:local_Gen}, it is very natural to ask the following.

\begin{enumerate}
    \item Is there an analogue for the wavelet transform (where now the geometry is the upper half space) or on bounded domains (cf.~\cite{alaifari2019Stable,alaifari2025cheeger})?
    \item What is the true role of the Hermite windows; can the result be generalized to other windows which do STFT phase retrieval?
    \item What is the true role of the Gaussian measure; can the result be generalized to other (radial) log-concave measures?
    \item  There are dense sets where stability holds and dense sets where it does not. Can we quantify which set is bigger? In particular, how does the stability constant in Theorem~\ref{thm:local_Gen} depend on $P_0$, at least in the case of the Fock space? A related question is to quantify the ``size" of the set of pairs $(f,g)\in L^2(\mathbb{R}^d)\times L^2(\mathbb{R}^d)$ so that $V_gf$ does local stable phase retrieval in $L^2(\mathbb{R}^d)$.
\item Are there other applications of Theorem~\ref{thm:local_Gen}? Are there other perspectives on the Fock space (e.g., related to ancient solutions to heat flows, coherent states, etc.)~that could provide alternative methods to prove or extend our results?

\end{enumerate}
\subsubsection{A special case of Theorem \ref{thm:local_Gen}} We now discuss some of the ideas used to generalize  Theorem~\ref{main:local}. Importantly, we note that independent iterations of the same prompts resulted in different strategies to generalize the proof, which may be  a fact of independent interest.
\vspace{0.5em}

For simplicity, we discuss the special case of Theorem \ref{thm:local_Gen} where we take $d=k=1$ and $F_0=\Phi_{1,0}\coloneq \Phi_0=\overline{z}$. The general case is more technical, but follows a similar line of reasoning to what is explained below. In this special case, the result is the analogue of Theorem \ref{main:local} for the first true polyanalytic Fock space $\mathcal{H}_1(\mathbb{C})$, where $\Phi_0$ plays the role of the constant function. The proof proceeds by using the orthogonal reduction to reduce matters to proving the orthogonal coercivity bound 
\begin{equation}\label{orthcoercivityhermite}
\norm{U}_\gamma
\lesssim
\norm{\abs{\Phi_0 + U} - \abs{\Phi_0}}_\gamma
\qquad
\bigl(U \in \mathcal{H}_1 \cap \Phi_0^\perp\bigr).
\end{equation}
This is the analogue of Theorem \ref{thm:orthog}. Once we have this estimate, the overall architecture of the proof is the same as in the Fock space case, and with a bit of work, one can essentially reduce all of the estimates to small variations of the ones used in this case. However, there are two minor but important points when carrying out these reductions.
\begin{enumerate}
\item The circle estimates in Section \ref{subsec:circle} only apply to trigonometric polynomials supported on positive frequencies, and use the defect function $\rho(w)=||1+w|-1|$ and not $||\Phi_0+w|-|\Phi_0||$. Moreover, $U$ is now allowed to have a non-trivial zeroth order Fourier mode because $\Phi_0$ is supported at frequency $-1$. The fix to this is simple: after writing everything in polar coordinates, one simply factors (for $r>0$) out the basis vector $\Phi_0$ on both sides of the estimate \eqref{orthcoercivityhermite}. This shifts $U$ to positive frequencies and normalizes the defect function to the one used in the Fock case, where the circle estimates in Section \ref{subsec:circle} apply.
\item The second minor difference is in the annulus localization carried out in Section \ref{subsec:blocks}. Here, instead of the monomials $z^n$ (in the Fock case), we have to understand the mass concentration properties of the basis vectors $\Phi_n\coloneq \Phi_{1,n}$ to obtain the analogue of the bound \eqref{eq:monomial-bound}.
\end{enumerate}
Once these two points are addressed, the proof proceeds in an almost identical fashion to the Fock space case. The proof of the general result in Theorem \ref{thm:local_Gen} in one dimension is much more technical, but can ultimately be deduced by following a parallel line of reasoning. 

\subsubsection{Higher dimensional analogues} We now discuss higher dimensional analogues in $\mathbb{C}^d$. We mention here the simple analogue of the Fock space case $\Fd = \mathcal{F}^2(\C^d)$.

\begin{theorem}[Local SPR at $\mathbf{1}$ in $\mathcal{F}_d$]\label{mainhigherd:local}
There exists a constant $M=M_d> 0$ such
that for any $F\in \mathcal{F}_d$ we have
\[
\inf_{|\lambda|=1}\lVert F - \lambda \rVert_{\mathcal{F}_d}
\;\le\;
M\,
\bigl\|\, \lvert F \rvert - 1 \,\bigr\|_{\mathcal{F}_d}.
\]
\end{theorem}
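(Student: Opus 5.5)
We follow the one-dimensional architecture. The abstract reductions are dimension free. Proposition~\ref{prop:local-reduction} applies with $\mathcal{H}=\Fd$ and $F=1$. Phase retrieval at $1$ holds because $|G|=1$ forces the entire function $G$ on $\C^d$ to be a unimodular constant, for instance by the maximum modulus principle applied to $G$ and $1/G$. Theorem~\ref{thm:reduction} applies with $f_0=1$, and a density argument passes from polynomials to all of $\Fd$. So everything reduces to the $d$-dimensional orthogonal coercivity estimate
\[
\norm{U}_{\Fd}\le C_d\,\norm{\rho(U)}_{\Fd}\qquad\text{for every polynomial } U \text{ on } \C^d \text{ with } U(0)=0 .
\]

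\textbf{Polar decomposition.} Instead of a naive tensorization, we use polar coordinates $\vec z=r\zeta$ with $r>0$ and $\zeta\in\Sphere$. Then
\[
\frac{1}{\pi^d}\int_{\C^d} F\,e^{-\norm{\vec z}^2}\,dm=\frac{2}{(d-1)!}\int_0^\infty r^{2d-1}e^{-r^2}\int_{\Sphere}F(r\zeta)\,d\sigma(\zeta)\,dr .
\]
Next we slice the sphere by complex lines. For fixed $\zeta\in\Sphere$, the function $t\mapsto U(re^{\ii t}\zeta)$ equals $\sum_{n\ge1}r^n P_n(\zeta)e^{\ii nt}$, where $P_n$ is the homogeneous degree-$n$ part of $U$. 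This is a one-variable trigonometric polynomial with strictly positive frequencies. By the Hopf fibration, the average over $\Sphere$ equals the average over $\zeta$ of the circle averages along $t$.

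\textbf{Circle and block estimates.} The circle estimates (Propositions~\ref{prop:local-circle} and~\ref{prop:high-freq}) then apply fiberwise, with no change, to the grouped polynomial $V_j$. Here the blocks $U_\ell$ are defined by grouping homogeneous components by degree, $U_\ell=\sum_{n\in I_\ell}P_n$. On each sphere of radius $r$, distinct homogeneous degrees are orthogonal, both in $L^2(\Sphere)$ and on each fiber circle. Hence the analogue of \eqref{eq:fock-decomp} holds, and Proposition~\ref{prop:annulus} holds verbatim with $B=1620$ after integrating in $\zeta$.

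\textbf{Main obstacle: leakage.} The step I expect to need real work is the analogue of the leakage bound (Proposition~\ref{prop:single-leak} and Corollary~\ref{cor:total-leak}). The radial weight for degree $n$ is now $r^{2n+2d-1}e^{-r^2}$, which peaks near $\sqrt{n+d-1/2}$, and its Gaussian mass normalization is $(n+d-1)!$ rather than $n!$. I would redo \eqref{eq:phi-quad} with $\varphi_n(r)=(2n+2d-1)\log r-r^2$, which still satisfies $\varphi_n''\le-2$. Together with Stirling, this gives
\[
r^{2n+2d-1}e^{-r^2}\lesssim_d (n+d-1)!\,e^{-\operatorname{dist}(r_n,[j,j+1])^2}.
\]
The peaks are shifted by $O(d)$ relative to the blocks $I_\ell$, so the distance is at least $(\abs{j-\ell}-c_d)_+$. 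To absorb this shift, I would either shift the block indexing or enlarge the window $M=M_d$. This also enlarges $L_j=(2M+1)(2j+1)$, so the threshold separating the cases of Proposition~\ref{prop:annulus} becomes $d$-dependent, which is harmless. Choosing $M_d$ large enough makes the absorption step \eqref{eq:absorb} work. This yields $C_d$, and then the orthogonal and local reductions give $M_d$.
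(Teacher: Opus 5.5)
Your proposal is correct and follows essentially the same route as the paper: the dimension-free local and orthogonal reductions, the polar formula on $\Sphere$, the circle estimates lifted by averaging over the complex circles $t\mapsto \ee^{\ii t}\zeta$ (this is the averaging argument behind Propositions~\ref{prop:lifted-local} and~\ref{prop:lifted-high}), and a degree-based frequency localization adapted to the weight $r^{2n+2d-1}\ee^{-r^2}$. The one point you should state explicitly is why enlarging the window to $M_d$ is harmless: the high-frequency threshold is $O(M)$, so the annulus constant grows only polynomially in the window ($B^2=O(M^2)$), and this is beaten by the Gaussian decay $\ee^{-(M-c_d)^2}$ of the leakage.
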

As in the one-dimensional case, Theorem~\ref{mainhigherd:local} can be reduced to proving the following orthogonal coercivity estimate.
\begin{theorem}[Orthogonal coercivity on $\C^d$]\label{thm:orthogd}
Fix $d \ge 1$.  There exists a constant $C_d > 0$ such that every
$U \in \Fd$ with $U(0) = 0$ satisfies
\[
\norm{U}_{\Fd}^2
\le
C_d^2\, \norm{\rhoFn \circ U}_{\Fd}^2.
\]
\end{theorem}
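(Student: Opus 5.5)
One way to prove Theorem~\ref{thm:orthogd} is to mimic the proof of Theorem~\ref{thm:orthog}, working in spherical coordinates on $\C^d$ instead of polar coordinates on $\C$. First, by density it suffices to treat polynomials $U(\vec z)=\sum_{1\le|\vec\alpha|\le D}a_{\vec\alpha}\vec z^{\vec\alpha}$. The passage to general $U\in\Fd$ with $U(0)=0$ follows by truncating the Taylor series, using that $\rho$ is $1$-Lipschitz so that $\rho\circ U_n\to\rho\circ U$ in $\Fd$. In polar form, $\vec z=r\zeta$ with $\zeta\in\Sphere$, and the Gaussian integral splits as $\frac{2}{(d-1)!}\int_0^\infty r^{2d-1}e^{-r^2}\int_{\Sphere}F(r\zeta)\,\dd\sigma(\zeta)\,\dd r$, where $\sigma$ is the normalized surface measure. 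The key structural replacement for the circle is the Hopf-type fibration: for fixed $\zeta$, the map $t\mapsto U(re^{\ii t}\zeta)=\sum_{n\ge1}e^{\ii nt}r^nH_n(\zeta)$ is a trigonometric polynomial with strictly positive frequencies. Here $H_n$ is the homogeneous part of degree $n$. Since $\sigma$ is invariant under $\zeta\mapsto e^{\ii t}\zeta$, we have $\int_{\Sphere}G\,\dd\sigma=\int_{\Sphere}\int_{S^1}G(e^{\ii t}\zeta)\,\dd t/2\pi\,\dd\sigma(\zeta)$. Thus every circle estimate of Section~\ref{subsec:circle} applies fiberwise, for each fixed $(r,\zeta)$, and can then be averaged over $\zeta$.

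Next I would redo the frequency localization with the degree $n=|\vec\alpha|$ playing the role of the frequency. Define blocks $U_\ell=\sum_{n\in I_\ell}H_n$. The homogeneous components are orthogonal on each sphere $r\Sphere$, since they are orthogonal on each fiber circle. The radial weight for degree $n$ is now $r^{2n+2d-1}e^{-r^2}$, which peaks at $r=\sqrt{n+d-1/2}$. Its normalization is $\int_{\Sphere}|H_n|^2\,\dd\sigma$ against $\frac{2}{(d-1)!}\int r^{2n+2d-1}e^{-r^2}\,\dd r=\frac{(n+d-1)!}{(d-1)!}$. The same argument as for \eqref{eq:phi-quad}, using $\varphi''\le-2$, together with Stirling's formula gives the analogue of Proposition~\ref{prop:single-leak}: the mass of $U_\ell$ on the annulus $\{j\le|\vec z|<j+1\}$ is at most $c_d\,e^{-(|j-\ell|-s_d)_+^2}\norm{U_\ell}_{\Fd}^2$, where the shift $s_d$ accounts for the offset of the peak by $O(d)$ in $n$, i.e.\ $O(1)$ in $r$. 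Choosing the window size $M=M_d$ large enough, depending on $d$, makes the leakage constant $\eta_{M}$ as small as needed, exactly as in Corollary~\ref{cor:total-leak}.

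For the annulus estimate, fix $r\in[j,j+1]$ and $\zeta$. Then $t\mapsto V_j(re^{\ii t}\zeta)$ has frequencies in $[(j-M)^2,(j+M+1)^2)$, of which there are $O(Mj)$. For $j\le j_0(M)$, Proposition~\ref{prop:local-circle} gives a bound with constant $144L_j$. For $j>j_0$, the condition $1343L_j^2\le N_j^2$ holds and Proposition~\ref{prop:high-freq} gives the constant $32$. Averaging over $\zeta\in\Sphere$ yields the analogue of Proposition~\ref{prop:annulus} with some constant $B_d$. The assembly is then verbatim from Section~\ref{subsec:assembly}. We split $U=V_j+R_j$ and use the Lipschitz property to obtain \eqref{eq:circle-ineq} on each sphere. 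We then integrate against the radial weight over $[j,j+1]$, sum over $j$, and absorb the term $(4B_d^2+2)\eta_{M}\norm{U}_{\Fd}^2$.

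The main obstacle is the absorption step. $B_d$ depends on $M$, since $j_0\sim M$ and $L_j\sim Mj$ give $B_d^2\sim M^2$. Meanwhile $\eta_M$ must beat $B_d^2$ despite the $d$-dependent shift and prefactor $c_d$. Since $\eta_M$ decays like $e^{-(M-s_d)^2}$, which is superexponential, while $B_d^2$ grows only polynomially in $M$, choosing $M$ large in terms of $d$ works. Checking the Stirling-type bound for the peak value with the extra factor $r^{2d-2}$ is the one genuinely new computation.
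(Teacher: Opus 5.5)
Your proposal is correct and follows essentially the route the paper sketches. You lift the local and high-frequency circle estimates to $\Sphere$ by averaging over the circles $t\mapsto \ee^{\ii t}\zeta$ (which is exactly what Propositions~\ref{prop:lifted-local} and~\ref{prop:lifted-high} record), localize by total degree, and run the assembly of Section~\ref{subsec:assembly} with $M$ large in terms of $d$ so that $\eta_M$ beats $B^2=O(M^2)$. The Stirling step you flag as new is just the one-dimensional peak bound with $n$ replaced by $n+d-1$, since the radial weight is $r^{2(n+d-1)+1}\ee^{-r^2}$; so $c_d=\ee^{1/4}$ works, and only the shift, which can be taken as $s_d=1+\sqrt{d}$, depends on $d$.
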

This is the analogue of Theorem \ref{thm:orthog} in the higher dimensional setting. As in the one-dimensional Fock space, we still have an orthonormal basis of monomials, except here they are of the form
\[
e_\alpha(z) := \frac{z^\alpha}{\sqrt{\alpha!}}
\]
where $\alpha\in\mathbb{N}^d$. As before, an important point in proving the orthogonal coercivity bound is in understanding the regions of space where $e_{\alpha}$ concentrates most of its mass. This allows one to carry out the analogue of the frequency localization in Section~\ref{subsec:blocks}. This part is relatively similar to the one-dimensional case. 
\vspace{0.5em}

The part of the proof that requires a relatively new idea is the analogue of the circle estimates in Section \ref{subsec:circle}. This is clarified by writing the $\|\cdot\|_{\mathcal{F}_d}$ norm in polar form
\begin{equation*}\label{eq:polard}
\norm{F}_{\Fd}^2
=
\frac{2}{(d-1)!}
\int_0^\infty
r^{2d-1} \ee^{-r^2}
\biggl(
\int_{\Sphere} \abs{F(r\omega)}^2\,\dd \sigma_d(\omega)
\biggr)\dd r,
\end{equation*}
and so, one needs analogues (which hold for functions on $S^{2d-1}$) of the following estimates:
\begin{enumerate}
\item the local circle estimate
$\norm{P}_{L^2(S^1)}^2 \le 144 L\, \norm{\rhoFn \circ P}_{L^2(S^1)}^2$
for trigonometric polynomials with $L$ positive frequencies;
\item the high-frequency estimate
$\norm{P}_{L^2(S^1)}^2 \le 32\, \norm{\rhoFn \circ P}_{L^2(S^1)}^2$
for bands $\{N,\dots,N+L-1\}$ with $1343L^2 \le N^2$.
\end{enumerate}
Fortunately, by a simple averaging argument, both of these one-dimensional estimates can be lifted to the higher dimensional sphere $S^{2d-1}$. We state below the lifted estimates. The analogues of the local estimates and high frequency estimates are given, respectively, as follows. 
\begin{proposition}[Lifted local estimate]\label{prop:lifted-local}
Let $E \subset \N_{\ge 1}$ be finite with $\abs{E} = L$, and let
\[
P(\omega) = \sum_{n \in E} G_n(\omega),
\]
where each $G_n$ is the restriction to $\Sphere$ of a homogeneous
holomorphic polynomial of degree $n$.  Then
\[
\norm{P}_{L^2(\Sphere)}^2
\le
144 L\, \norm{\rhoFn \circ P}_{L^2(\Sphere)}^2.
\]
\end{proposition}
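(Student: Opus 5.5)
The plan is to lift Proposition~\ref{prop:local-circle} to the sphere by averaging over complex circles. For $\omega\in\Sphere$, consider the slice $t\mapsto P(\ee^{\ii t}\omega)$. Since each $G_n$ is homogeneous holomorphic of degree $n$, we have $G_n(\ee^{\ii t}\omega)=\ee^{\ii n t}G_n(\omega)$. Thus
\[
p_\omega(t):=P(\ee^{\ii t}\omega)=\sum_{n\in E} G_n(\omega)\,\ee^{\ii n t}
\]
is a trigonometric polynomial whose frequencies lie in $E\subset\mathbb{Z}_{>0}$, and it has at most $L$ frequencies. Proposition~\ref{prop:local-circle} applies to it directly, including when some coefficients $G_n(\omega)$ vanish, since the bound $144L$ is monotone in the number of frequencies. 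This gives, for every $\omega$,
\[
\int_{S^1}\abs{P(\ee^{\ii t}\omega)}^2\,\dd\sigma(t)\le 144L\int_{S^1}\rhoFn\bigl(P(\ee^{\ii t}\omega)\bigr)^2\,\dd\sigma(t).
\]

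Next we integrate this inequality over $\omega\in\Sphere$ against the normalized surface measure $\sigma_d$. The key fact is the invariance of $\sigma_d$ under the unitary rotations $\omega\mapsto \ee^{\ii t}\omega$. By Fubini, for any integrable $\Psi$ on $\Sphere$,
\[
\int_{\Sphere}\int_{S^1}\Psi(\ee^{\ii t}\omega)\,\dd\sigma(t)\,\dd\sigma_d(\omega)=\int_{\Sphere}\Psi\,\dd\sigma_d.
\]
Applying this with $\Psi=\abs{P}^2$ on the left and $\Psi=\rhoFn(P)^2$ on the right yields the claimed inequality with the same constant $144L$. (In effect, this is the Hopf fibration decomposition of $\sigma_d$.)

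There is essentially no obstacle. The only point to check is that $p_\omega$ genuinely has positive frequencies. This is exactly what homogeneity of degree $n\ge 1$ provides, and it is the role of the hypothesis $E\subset\N_{\ge1}$. Measurability and continuity are automatic since $P$ is a polynomial.
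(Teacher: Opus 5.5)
Your proof is correct and is essentially the ``simple averaging argument'' the paper alludes to. Restricting to the complex circles $t\mapsto \ee^{\ii t}\omega$ turns $P$ into a trigonometric polynomial with frequencies in $E$, so Proposition~\ref{prop:local-circle} applies on each slice; vanishing coefficients are already allowed there, so the monotonicity remark is not even needed. The invariance of $\sigma_d$ under $\omega\mapsto \ee^{\ii t}\omega$, together with Fubini, then reassembles both sides with the same constant $144L$.
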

\begin{proposition}[Lifted high-frequency estimate]\label{prop:lifted-high}
Let $N,L \ge 1$ satisfy $1343L^2 \le N^2$, and let
\[
P(\omega) = \sum_{m=0}^{L-1} G_{N+m}(\omega),
\]
where each $G_{N+m}$ is the restriction to $\Sphere$ of a homogeneous
holomorphic polynomial of degree $N+m$.  Then
\[
\norm{P}_{L^2(\Sphere)}^2
\le
32\, \norm{\rhoFn \circ P}_{L^2(\Sphere)}^2.
\]
\end{proposition}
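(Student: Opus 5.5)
The plan is to lift Proposition~\ref{prop:high-freq} to $\Sphere$ by averaging over complex circles, that is, over the fibres of the Hopf fibration. For $\omega\in\Sphere$ and $t\in\mathbb{R}$ we have $\ee^{\ii t}\omega\in\Sphere$, and the map $(\omega,t)\mapsto \ee^{\ii t}\omega$ preserves the normalized surface measure $\sigma_d$. Hence for any integrable $f$ on $\Sphere$,
\[
\int_{\Sphere} f\,\dd\sigma_d=\int_{\Sphere}\Bigl(\int_{S^1} f(\ee^{\ii t}\omega)\,\dd\sigma(t)\Bigr)\dd\sigma_d(\omega).
\]
This is the ``simple averaging argument'' alluded to above. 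It reduces the claimed inequality to a pointwise-in-$\omega$ statement on each circle $t\mapsto \ee^{\ii t}\omega$.

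Fix $\omega\in\Sphere$. Since $G_{N+m}$ is homogeneous holomorphic of degree $N+m$, we have $G_{N+m}(\ee^{\ii t}\omega)=\ee^{\ii (N+m)t}G_{N+m}(\omega)$. Therefore
\[
P(\ee^{\ii t}\omega)=\sum_{m=0}^{L-1} b_m(\omega)\,\ee^{\ii (N+m)t},\qquad b_m(\omega):=G_{N+m}(\omega)\in\C,
\]
which is exactly a trigonometric polynomial of the form in Proposition~\ref{prop:high-freq}, with the same $N$ and $L$ satisfying $1343L^2\le N^2$. Applying Proposition~\ref{prop:high-freq} to the function $t\mapsto P(\ee^{\ii t}\omega)$ gives
\[
\int_{S^1}\abs{P(\ee^{\ii t}\omega)}^2\,\dd\sigma(t)\le 32\int_{S^1}\rhoFn\bigl(P(\ee^{\ii t}\omega)\bigr)^2\,\dd\sigma(t)
\]
for every $\omega$. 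It is important here that $\rhoFn\circ P$ is evaluated pointwise, so composition with $\rhoFn$ commutes with restriction to the circle.

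Finally, we integrate this inequality in $\omega$ against $\sigma_d$ and apply the averaging identity to both sides. This yields $\norm{P}_{L^2(\Sphere)}^2\le 32\,\norm{\rhoFn\circ P}_{L^2(\Sphere)}^2$. Measurability and integrability are not a concern, since $P$ and $\rhoFn\circ P$ are continuous on a compact set.

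The only step that needs real justification is the averaging identity, i.e.\ the rotation invariance of $\sigma_d$ under $\omega\mapsto \ee^{\ii t}\omega$ combined with Fubini. The rotation $\omega\mapsto\ee^{\ii t}\omega$ is a unitary map of $\C^d$, and $\sigma_d$ is $U(d)$-invariant. So for each fixed $t$ we have $\int f(\ee^{\ii t}\omega)\,\dd\sigma_d(\omega)=\int f\,\dd\sigma_d$. Averaging this over $t\in S^1$ and swapping the integrals by Fubini (the integrand is jointly continuous) gives the identity. The rest is bookkeeping. The same argument, with Proposition~\ref{prop:local-circle} in place of Proposition~\ref{prop:high-freq}, gives Proposition~\ref{prop:lifted-local}.
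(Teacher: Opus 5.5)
Your proof is correct and is the ``simple averaging argument'' the paper alludes to. You average over the circles $t\mapsto \ee^{\ii t}\omega$ using the invariance of $\sigma_d$ under $\omega\mapsto \ee^{\ii t}\omega$; homogeneity makes $P$ on each circle a trigonometric polynomial with frequencies $N,\dots,N+L-1$, and you then apply Proposition~\ref{prop:high-freq} circle by circle and integrate back over $\omega$.
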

With this in hand, the proof of Theorem \ref{mainhigherd:local} follows a similar path to the one-dimensional case (albeit with some added technical, but routine, details). To obtain Theorem \ref{thm:local_Gen}, one may combine the above ideas with the ingredients described above for the one-dimensional Hermite case. While these are the core additional ingredients, the execution is rather technical, which is why we have omitted it.

\subsection{Proof of Theorem~\ref{thm:FNTLogSob}}\label{subsec:FNTproof}

We now show how Theorem~\ref{thm:FNTLogSob} follows from
Theorem~\ref{main:local} and
\cite[Corollary~1.2]{DolbeaultEstebanFigalliFrankLoss2025}.  The argument
has three main ingredients. The first is an identity between the norm of the gradient of $|F|$ and $|F'|$,
which converts the log-Sobolev deficit for entire functions into a Gaussian
log-Sobolev deficit for the modulus~$\abs{F}$. The second is the dimension-free
 Gaussian log-Sobolev inequality due to
Dolbeault--Esteban--Figalli--Frank--Loss \cite{DolbeaultEstebanFigalliFrankLoss2025}. The final ingredient is the standard covariance property of the
Fock space, which will allow us to transfer the local stability at
the constant function provided by Theorem~\ref{main:local} into local stability at any other extremizer. 
 \vspace{0.5em}

Before moving on with the proof of Theorem \ref{thm:FNTLogSob}, we state some preliminary facts. 
For $\beta \in \C$, let
\[
G_\beta(z) \;:=\; \ee^{\beta z - \abs{\beta}^2/2},
\qquad z \in \C.
\]
Apart from being the class of extremizers to the logarithmic Sobolev inequality, these functions also constitute the family of normalized reproducing kernels for the Bargmann-Fock space. In particular, we have $G_\beta \in \mathcal{F}^2(\C)$ with $\norm{G_\beta}_{\mathcal{F}} = 1$,
and $\abs{G_\beta(z)}
= \ee^{\operatorname{Re}(\beta z) - \abs{\beta}^2/2}$.
We define the \emph{Bargmann shift}
$W_\beta : \mathcal{F}^2(\C) \to \mathcal{F}^2(\C)$ by
\[
(W_\beta H)(z)
\;:=\; \ee^{\beta z - \abs{\beta}^2/2}\, H(z - \overline{\beta}),
\qquad H \in \mathcal{F}^2(\C).
\]
When computing the Fock norm of $W_\beta H$, a change of variables $w = z - \overline{\beta}$ 
together with the identity
$\abs{z}^2 = \abs{w}^2 + 2\operatorname{Re}(\beta w) + \abs{\beta}^2$
makes the weight
$\ee^{2 \operatorname{Re}(\beta z) - \abs{\beta}^2 - \abs{z}^2}$
become $\ee^{-\abs{w}^2}$. Hence, $W_{\beta}$ is an isometry.  A direct
computation yields $W_\beta W_{-\beta} = I = W_{-\beta} W_\beta$, so $W_\beta$
is unitary on $\mathcal{F}^2(\C)$ with $W_\beta^{-1} = W_{-\beta}$.
Note also that $W_\beta\, \mathbf{1} = G_\beta$ and
$W_{-\beta}\, G_\beta = \mathbf{1}$.
 \vspace{0.5em}

Next, we claim that for every $F \in \mathcal{F}^2(\C)$ and every
$\beta \in \C$,
\begin{equation}\label{eq:fnt-covariance}
\norm{\, \abs{W_{-\beta} F} - 1 \,}_{\mathcal{F}}
\;=\;
\norm{\, \abs{F} - \abs{G_\beta} \,}_{\mathcal{F}}.
\end{equation}
Since $W_{-\beta}$ is unitary on $\mathcal{F}^2(\C)$ and
$\norm{\mathbf{1}}_{\mathcal{F}} = \norm{G_\beta}_{\mathcal{F}} = 1$,
expanding both sides reduces \eqref{eq:fnt-covariance} to the identity
\[
\bigl\langle\, \abs{W_{-\beta} F},\, \mathbf{1} \,\bigr\rangle_{\mathcal{F}}
\;=\;
\bigl\langle\, \abs{F},\, \abs{G_\beta} \,\bigr\rangle_{\mathcal{F}}.
\]
This latter identity follows from a direct change of variables. Indeed, by writing $$\abs{W_{-\beta} F(z)}
= \ee^{-\operatorname{Re}(\beta z) - \abs{\beta}^2/2}\,
\abs{F(z + \overline{\beta})}$$ and substituting $w = z + \overline{\beta}$,
the exponent
$-\operatorname{Re}(\beta z) - \abs{\beta}^2/2 - \abs{z}^2$
becomes $$\operatorname{Re}(\beta w) - \abs{\beta}^2/2 - \abs{w}^2
= \log \abs{G_\beta(w)} - \abs{w}^2.$$ Thus,
\[
\begin{aligned}
\frac{1}{\pi}\int_{\C} \abs{W_{-\beta} F(z)}\,
\ee^{-\abs{z}^2}\,\dd m(z)
&=
\frac{1}{\pi}\int_{\C} \abs{F(w)}\,
\abs{G_\beta(w)}\, \ee^{-\abs{w}^2}\,\dd m(w)
\\
&=
\bigl\langle\, \abs{F},\, \abs{G_\beta} \,\bigr\rangle_{\mathcal{F}}.
\end{aligned}
\]
We are now ready to prove Theorem \ref{thm:FNTLogSob}. 

\begin{proof}[Proof of Theorem~\ref{thm:FNTLogSob}]
Let $F \in \mathcal{F}^2(\C)$ with $\norm{F}_{\mathcal{F}} = 1$.
We may assume $\norm{F'}_{\mathcal{F}} < \infty$, since otherwise the
left-hand side of Theorem~\ref{thm:FNTLogSob} is infinite and there is nothing
to prove.
 \vspace{0.5em}

\noindent
\emph{Step 1: A gradient identity.}\;
A direct application of the Cauchy--Riemann equations
gives $\abs{\nabla F}^2 = 2 \abs{F'}^2$.  We also have, for
entire $F$, the standard identity
\begin{equation}\label{eq:fnt-nabla-mod}
\abs{\nabla \abs{F}}^2
\;=\; \abs{F'}^2 \qquad \text{a.e. on } \C.
\end{equation}
To see this,  write $F = u + \ii v$ with $u, v$ real-valued and apply the Cauchy--Riemann
equations $u_{x_1} = v_{x_2}$, $u_{x_2} = -v_{x_1}$.  On $\{F \neq 0\}$,
$\nabla \abs{F} = (u \nabla u + v \nabla v) / \abs{F}$, and a
short computation gives
\[
(u u_{x_1} + v v_{x_1})^2
+ (u u_{x_2} + v v_{x_2})^2
\;=\; (u^2 + v^2)\, (u_{x_1}^2 + v_{x_1}^2),
\]
so that
$\abs{\nabla \abs{F}}^2
= u_{x_1}^2 + v_{x_1}^2 = \abs{F'}^2$.
As the zero set of an entire function on $\mathbb{C}$ is discrete, \eqref{eq:fnt-nabla-mod} follows. In particular,
$\abs{F} \in H^1(d\gamma)$ with
\begin{equation}\label{eq:fnt-mod-energy}
\int_{\C} \abs{\nabla \abs{F}}^2\, \dd\gamma
\;=\; \norm{F'}_{\mathcal{F}}^2.
\end{equation}

\noindent
\emph{Step 2: The application of \cite[Corollary~1.2]{DolbeaultEstebanFigalliFrankLoss2025}.}\;
Let $\dd\widetilde{\gamma}(x) = \ee^{-\pi \abs{x}^2}\,\dd x$
on $\mathbb{R}^2$ denote the normalized 
Gaussian measure. We recall that
\cite[Corollary~1.2]{DolbeaultEstebanFigalliFrankLoss2025} states that, for
every $u \in H^1(d\widetilde{\gamma})$,
\begin{equation}\label{eq:DEFFL}
\int_{\mathbb{R}^2} \abs{\nabla u}^2\, \dd\widetilde{\gamma}
\,-\, \pi \int_{\mathbb{R}^2} u^2\, \ln\!\biggl(\frac{u^2}{\norm{u}_{L^2(\widetilde{\gamma})}^2}\biggr) \dd\widetilde{\gamma}
\;\ge\;
\frac{\beta_\star\, \pi}{2}
\inf_{b \in \mathbb{R}^2,\, a \in \mathbb{R}}
\int_{\mathbb{R}^2} \bigl(u - a\, \ee^{b\cdot x}\bigr)^2\, \dd\widetilde{\gamma},
\end{equation}
where $\beta_* > 0$ is an absolute constant. We now apply
\eqref{eq:DEFFL} to $u(x) := \abs{F}(\sqrt{\pi}\, x)$.  Since the change of 
variables $z = \sqrt{\pi}\, x$ sends $\tilde{\gamma}$ to $\gamma$, tracking
each term in \eqref{eq:DEFFL} and using \eqref{eq:fnt-mod-energy} for the gradient contribution, we obtain
\[
\norm{u}_{L^2(d\widetilde{\gamma})}
\;=\; \norm{F}_{\mathcal{F}} \;=\; 1,
\qquad
\int_{\mathbb{R}^2} \abs{\nabla u}^2\, \dd\widetilde{\gamma}
\;=\; \pi \int_{\C} \abs{\nabla \abs{F}}^2\, \dd\gamma
\;=\; \pi\, \norm{F'}_{\mathcal{F}}^2,
\]
\[
\pi \int_{\mathbb{R}^2} u^2 \ln u^2\, \dd\widetilde{\gamma}
\;=\; \pi \int_{\C} \abs{F}^2 \ln \abs{F}^2\, \dd\gamma.
\]
For the deficit term on the right-hand side of \eqref{eq:DEFFL}, we write
$b = (b_1, b_2) \in \mathbb{R}^2$ and set
$\beta := (b_1 - \ii b_2) / \sqrt{\pi} \in \C$, so that
$b\cdot x = \operatorname{Re}(\beta z)$. Then
\[
\int_{\mathbb{R}^2} \bigl(u - a\, \ee^{b\cdot x}\bigr)^2\, \dd\widetilde{\gamma}
\;=\;
\int_{\C} \bigl(\abs{F(z)} - a\, \ee^{\operatorname{Re}(\beta z)}\bigr)^2\, \dd\gamma(z).
\]
Substituting all of the above into \eqref{eq:DEFFL} and dividing by~$\pi$, we obtain
\begin{equation}\label{eq:fnt-DEFFL-applied}
\norm{F'}_{\mathcal{F}}^2
\,-\, \int_{\C} \abs{F}^2 \ln \abs{F}^2\, \dd\gamma
\;\ge\;
\frac{\beta_\star}{2}
\inf_{\beta \in \C,\, a \in \mathbb{R}}
\norm{\, \abs{F} - a\, \ee^{\operatorname{Re}(\beta\,\cdot\,)} \,}_{\mathcal{F}}^2.
\end{equation}

\noindent
\emph{Step 3: Rewriting the deficit term.}\;
Since $\ee^{\operatorname{Re}(\beta z)}
= \ee^{\abs{\beta}^2/2}\, \abs{G_\beta(z)}$ and
$\ee^{\abs{\beta}^2/2} \in (0, \infty)$, the substitution
$a \mapsto a\, \ee^{\abs{\beta}^2/2}$ shows that
\[
\inf_{\beta \in \C,\, a \in \mathbb{R}}
\norm{\, \abs{F} - a\, \ee^{\operatorname{Re}(\beta\,\cdot\,)} \,}_{\mathcal{F}}^2
\;=\;
\inf_{\beta \in \C,\, a \in \mathbb{R}}
\norm{\, \abs{F} - a\, \abs{G_\beta} \,}_{\mathcal{F}}^2.
\]
For fixed $\beta \in \C$, set $t_\beta := \langle \abs{F},\, \abs{G_\beta} \rangle_{\mathcal{F}}$.
By Cauchy--Schwarz, $t_\beta \in [0, 1]$, and the optimal $a \in \mathbb{R}$ is
$a = t_\beta$, with minimum value $1 - t_\beta^2$.  Since
$\norm{\, \abs{F} - \abs{G_\beta} \,}_{\mathcal{F}}^2 = 2(1 - t_\beta)$
and
$1 - t_\beta^2 = (1 - t_\beta)(1 + t_\beta) \ge \tfrac{1}{2}\cdot 2(1 - t_\beta)$,
we obtain
\begin{equation}\label{eq:fnt-c-opt}
\inf_{\beta \in \C,\, a \in \mathbb{R}}
\norm{\, \abs{F} - a\, \abs{G_\beta} \,}_{\mathcal{F}}^2
\;\ge\;
\tfrac{1}{2}\, \inf_{\beta \in \C}
\norm{\, \abs{F} - \abs{G_\beta} \,}_{\mathcal{F}}^2.
\end{equation}

\noindent
\emph{Step 4: The application of Theorem~\ref{main:local}.}\;
Fix any $\beta \in \C$ and set $H := W_{-\beta} F \in \mathcal{F}^2(\C)$.
By \eqref{eq:fnt-covariance},
$\norm{\, \abs{H} - 1 \,}_{\mathcal{F}}
= \norm{\, \abs{F} - \abs{G_\beta} \,}_{\mathcal{F}}$.
By unitarity of $W_{-\beta}$ and the identity $W_{-\beta} G_\beta = \mathbf{1}$, we have
\[
\norm{H - c}_{\mathcal{F}}
\;=\; \norm{W_{-\beta}(F - c G_\beta)}_{\mathcal{F}}
\;=\; \norm{F - c G_\beta}_{\mathcal{F}}
\qquad \text{for every } c \in \C.
\]
Theorem~\ref{main:local} applied to $H$ therefore yields
\[
\inf_{\abs{c}=1} \norm{F - c G_\beta}_{\mathcal{F}}
\;\le\; M\, \norm{\, \abs{F} - \abs{G_\beta} \,}_{\mathcal{F}}.
\]
Squaring and taking the infimum over $\beta \in \C$, we obtain
\begin{equation}\label{eq:fnt-mainlocal}
\inf_{\substack{\beta \in \C \\ \abs{c}=1}}
\norm{F - c G_\beta}_{\mathcal{F}}^2
\;\le\;
M^2\, \inf_{\beta \in \C}
\norm{\, \abs{F} - \abs{G_\beta} \,}_{\mathcal{F}}^2.
\end{equation}
Combining \eqref{eq:fnt-DEFFL-applied}, \eqref{eq:fnt-c-opt} and
\eqref{eq:fnt-mainlocal}, it follows that
\[
\norm{F'}_{\mathcal{F}}^2
\,-\, \int_{\C} \abs{F}^2 \ln \abs{F}^2\, \dd\gamma
\;\ge\;
\frac{\beta_\star}{4 M^2}
\inf_{\substack{\beta \in \C \\ \abs{c}=1}}
\norm{\, F - c\, \ee^{\beta z - \abs{\beta}^2/2} \,}_{\mathcal{F}}^2.
\]
This proves Theorem~\ref{thm:FNTLogSob}.
\end{proof}

\section{Discussions on the Lean verification}\label{App}
This section discusses the formalization of Theorem \ref{thm:local_Gen}. The file \lean{DimdPoly.lean}  provides a self-contained version of the main statement that was Lean verified\footnote{This is in the spirit of the comparator approach \cite{LeanComparator}
to safely verify Lean proofs. Indeed, the repository also contains an appropriate \lean{Challenge.lean} file to be verified.}. When writing these statements, our goal was to maximize intelligibility for mathematicians who are not well-versed in Lean, even if it sometimes resulted in \emph{less idiomatic} Lean code.  \vspace{0.5em}

The full Lean code is available at https://github.com/susannabertolini/PhaseRetrieval. We refer the reader to the READMEs for an explanation of the  structure of the repository. We emphasize that although the reviewer facing statements have been carefully curated by the authors with the objective of making them understandable to a broad audience, the rest of the Lean code is of poor quality and is simply there to verify correctness -- it is not meant to be digested by the reader. \vspace{0.5em}

This autoformalization, and the trust we placed in it, is possible only because of the enormous efforts by the Mathlib community \cite{mathlib2020} in carefully formalizing mathematical objects in Lean and ensuring that their meaning is what the mathematical community expects.  In particular, our formalization contains plenty of mathematical statements such as \lean{MeasureTheory.volume.withDensity} or \lean{Real.sqrt} whose construction is not detailed in this section, but which have been thoughtfully designed and very carefully reviewed by the Mathlib community. \vspace{0.5em}

We now attempt to give an explanation of the content of the \lean{DimdPoly.lean} file.  The
formalized theorem discussed here is 
Theorem~\ref{thm:local_Gen}.  \vspace{0.5em}  

\theoremstyle{plain}
\newtheorem*{theorem*}{Theorem}

Throughout the discussion, we fix a positive dimension $d$.  In the Lean file,
the dimension $d \in \mathbb N$ is declared once as a file-level variable, with the hypothesis (named \lean{hd}) that $0<d$:
\begin{leancode}
variable {d : ℕ} (hd : 0<d)
\end{leancode}

\subsection{The canonical basis}

We start by defining the one-dimensional basis. For $k,n \in \mathbb N$ and
$z\in \mathbb C$, we use
\begin{equation*}
    \Phi_{k,n}(z) = \frac{1}{\sqrt{n!k!}}
    \sum_{j=0}^{\min\{n,k\}}(-1)^j j!
    \binom{k}{j}\binom{n}{j} z^{n-j}\overline{z}^{k-j}.
\end{equation*}

In Lean, everything has a type. Types are a primitive notion, in the spirit of sets. For example, $0$ has type $\mathbb N$, expressed as $(0:\mathbb N)$. The function $f(x) := x^2$ has type $\mathbb R \to \mathbb R$. One can define new functions, that take various inputs of various types, as follows:
\begin{leancode}
def DefinitionName (variable₀ : Type₀) (variable₁ variable₂: Type₁) : OutputType :=
    Insert your definition
\end{leancode}
With this in mind, $\Phi_{k,n}(z)$ is expressed as a function \lean{HermitePoly} that takes two integers \lean{k n}, one complex number \lean{z} and returns a complex number, as follows
\begin{leancode}
def HermitePoly (k n : ℕ) (z : ℂ) : ℂ :=
  (Real.sqrt ((Nat.factorial n) * (Nat.factorial k)))⁻¹ *
    ∑ j ∈ Finset.range (min n k + 1),
      ((-1) ^ j) * (Nat.factorial j ) *
        (Nat.choose n j) * (Nat.choose k j) *
        z ^ (n - j) * (star z) ^ (k - j)
\end{leancode}
Here, {\lean{Finset.range N}} is the finite set $\{0,\ldots,N-1\}$.  Thus, 
{\lean{∑ j ∈  Finset.range (min n k + 1)}} 
expresses the sum over $j=0,\ldots,\min\{n,k\}$.
 \vspace{0.5em}

The $d$-dimensional Hermite polynomials are tensor products of the $\Phi_{k,n}$:
\[
\Phi_{\vec\kappa,\vec\alpha}(\vec z)
:=\prod_{q=0}^{d-1}\Phi_{\vec\kappa_q,\vec\alpha_q}(\vec z_q).
\]

    In Lean, we define points $\kappa \in \mathbb N^d$ as maps from \lean{Fin d} to $\mathbb N$, written as  {\lean{κ : Fin d → ℕ}}. Here, \mbox{\lean{Fin d}} is the finite type with elements
    $\{0,1,\ldots,d-1\}$. In other words, the vector $\vec z := (z_0,\ldots,z_{d-1})$ is defined as the map that sends $q \in \{0, \dots, d-1\}$ to $z_q$.
    In Lean, this is written as follows:
\begin{leancode}
def Φ (κ α : Fin d → ℕ) (z : Fin d → ℂ) : ℂ :=
  ∏ q : Fin d, HermitePoly (κ q) (α q) (z q)
\end{leancode}
The notation is meant to be read literally: {\lean{κ}} and {\lean{α }}are
multi-indices, and {\lean{z}} is a point of $\C^d$.  The notation {\lean{∏ q : Fin d, ...}} denotes the finite product over all coordinates.
 \vspace{0.5em}

We next define finite linear combinations of the basis functions, indexed by their coefficients.
Given $\vec\kappa\in\mathbb N^d$, and a finitely supported coefficient family $\{F({\vec \alpha})\}_{\vec \alpha\in \mathbb N^d}$, with $F({\vec \alpha})\in \mathbb C$, we define
\[
    P_F(\vec z)
    = \sum_{\vec\alpha\in \operatorname{Supp}F}
      F(\vec\alpha)\Phi_{\vec\kappa,\vec\alpha}(\vec z).
\]
The Lean definition is:
\begin{leancode}
def TrueHermitePoly (κ : Fin d → ℕ)
    (F : Finsupp (Fin d → ℕ) ℂ) : (Fin d → ℂ) → ℂ :=
  fun z ↦ F.sum (fun α Fα ↦ Fα * Φ κ α z)
\end{leancode}
Let \lean{F} be a finitely supported map, which we denote as {\lean{Finsupp}}. The expression {\lean{F.sum}} is the
finite sum over the support of \lean{F}.  The definition
{\lean{TrueHermitePoly κ F}} is a function $\C^d\to\C$. 
 \vspace{0.5em}


We define the set of all (finite) linear combinations of the basis vectors by $$\mathcal H_{\vec \kappa, fin}(\mathbb C^d) := \operatorname{span}\{\Phi_{\vec\kappa, \vec\alpha} :\vec \alpha \in \N^d\},$$ which in Lean is the \lean{range} of all the elements obtained with the construction above.

\begin{leancode}
def TrueHermitePolys (κ : Fin d → ℕ) :
    Set ((Fin d → ℂ) → ℂ) :=
  Set.range (TrueHermitePoly κ)
\end{leancode}

\subsection{The Gaussian measure}

The Gaussian measure has density
\[
\frac{1}{\pi^d}e^{-\sum_{q=0}^{d-1}\abs{\vec z_q}^2}
\]
In Lean, we write
\begin{leancode}
def GaussianDensity (z : Fin d → ℂ) : ℝ :=
  (1 / Real.pi ^ d) *
    Real.exp (-Finset.sum Finset.univ fun q : Fin d ↦ ‖z q‖ ^ 2)
\end{leancode}
From this Gaussian density, which is a function, we can define a measure. In Lean, measures are not built from densities taking values in the reals, but from densities taking values in the non-negative reals and possibly $+\infty$, which are denoted by \lean{ENNReal}. With this in mind, one builds the Gaussian measure as follows:
\begin{leancode}
def γ : MeasureTheory.Measure (Fin d → ℂ) :=
  MeasureTheory.volume.withDensity fun z ↦ ENNReal.ofReal (GaussianDensity z)
\end{leancode}
We can now define the complex-valued $L^2(d\gamma)$ as \lean{(MeasureTheory.Lp ℂ 2 (γ (d := d)))}. In this case, Lean was not able to infer the value of \lean{d} automatically from the measure $d\gamma$ and it must be provided by us. We define \lean{TrueHermiteClosure κ} as $\mathcal H_{\vec \kappa} (\mathbb C^d) := \overline{\mathcal H_{\vec \kappa, fin}(\mathbb C^d)}$, which is the closure of the set of $L^2$ functions that are $\gamma$-almost-everywhere equal to an element in \lean{TrueHermitePolys}.

\begin{leancode}
def TrueHermiteFunctions (κ : Fin d -> ℕ) :
    Set (MeasureTheory.Lp ℂ 2 (γ (d := d))) :=
  closure { f | ∃ P ∈ TrueHermitePolys κ, f =ᵐ[γ] P }
\end{leancode}
In Lean, $L^p$ functions are defined as equivalence classes of functions which are a priori known to be $L^p$-integrable. Therefore, we can not directly take the closure of \lean{TrueHermitePolys}.

\subsection{Statement of the main result}
The theorem in \lean{DimdPoly.lean} is the following
stability statement:
\begin{theorem*}
Let $d > 0$ and $\vec\kappa\in\N^d$. For every $P_0\in \mathcal H_{\vec\kappa, fin}(\C^d)$ there exists a
constant $M=M(\vec\kappa,P_0)>0$ such that for every
$F\in\mathcal H_{\vec\kappa}(\C^d)$ we have
\[
\inf_{|\theta|=1}\lVert P_0-\theta F \rVert_{\gamma_d}^2
\le
M^2\,\bigl\|\, \lvert P_0\rvert-\lvert F\rvert\,\bigr\|_{\gamma_d}^2.
\]
\end{theorem*}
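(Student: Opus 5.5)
We prove the statement in three stages, mirroring the Fock-space argument. The first step is normalization. If $P_0=0$ the claim is trivial, since $\inf_\theta\|\theta F\|=\|F\|=\||F|\|$. Otherwise we rescale so that $\|P_0\|_{\gamma_d}=1$; both sides are homogeneous of degree two under $(P_0,F)\mapsto(cP_0,cF)$.

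The second step reduces to orthogonal coercivity. We apply Proposition~\ref{prop:local-reduction} with $\mathcal{H}=\mathcal{H}_{\vec\kappa}(\C^d)$, which is a closed subspace of $L^2(\gamma_d)$. It has bounded point evaluations: the monomials $z^\alpha\bar z^\beta$ of bounded degree are dominated by the Gaussian tails. Phase retrieval at $P_0$ holds by the ambiguity-function identity \eqref{Ambiguity to STFT}, since the Hermite ambiguity functions vanish only on null sets. So it suffices to prove local-local stability. For that we apply Theorem~\ref{thm:reduction} with $f_0=P_0$, phase-normalizing $\theta$ so that $\langle \theta F-P_0,P_0\rangle\in\mathbb{R}$ exactly as in Corollary~\ref{cor:fock-local}. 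The density passage from finite combinations to all of $\mathcal{H}_{\vec\kappa}$ is the same as in the proof of Theorem~\ref{main:local}. Everything is thereby reduced to the coercivity estimate
\[
\|U\|_{\gamma_d}\le C(P_0)\,\bigl\|\,|P_0+U|-|P_0|\,\bigr\|_{\gamma_d}\qquad (U\in\mathcal{H}_{\vec\kappa}\cap P_0^\perp),
\]
which we only need for finite combinations $U$.

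The third step, the coercivity estimate, is the heart of the matter. We pass to polar coordinates $\vec z=r\omega$, $\omega\in S^{2d-1}$. After rotating each coordinate, every basis element $\Phi_{\vec\kappa,\vec\alpha}$ has a definite multi-frequency $\vec\alpha-\vec\kappa$ in the torus angles. Since $P_0$ is a finite combination, all of its frequencies lie in a fixed finite set $S$, and on the region where $|P_0|\ge c>0$ we can divide by $P_0$. The defect then becomes $|P_0|\,\rho(U/P_0)$, the analogue of item (1) in the special case $\Phi_0=\bar z$.

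The difficulty is that $U/P_0$ is no longer a positive-frequency polynomial when $P_0$ has several frequencies. We therefore split $U$ into two parts, as follows.

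\textbf{High part.} This consists of the components of $U$ at total degree much larger than $\deg P_0$ (and those at multi-frequencies far outside $S$). It is handled by the lifted high-frequency estimate, Proposition~\ref{prop:lifted-high}, together with the annular localization of Section~\ref{subsec:blocks}. The radial profiles of the $\Phi_{\vec\kappa,\vec\alpha}$ concentrate near $|\vec z|^2\approx|\vec\alpha|$, which gives leakage bounds of the type in Corollary~\ref{cor:total-leak}. On those far annuli $P_0$ is comparatively negligible in relative weight once we compare with $|U|$, so the nonlinearity behaves as in the Fock case: by the Lipschitz bound \eqref{eq:rho-lip}, $\bigl||P_0+U|-|P_0|\bigr|\ge|U|-2|P_0|$ there, and the Gaussian tails of $P_0$ are absorbed.

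\textbf{Low part.} This is the finite-dimensional piece (degrees up to a threshold $K(P_0)$). Here we argue by compactness of the unit sphere in this space, combined with phase retrieval, and glue the result to the high part using the absorption scheme of Section~\ref{subsec:assembly}. Small amplitudes are handled by linearization: $|P_0+U|-|P_0|\approx\operatorname{Re}(U\bar P_0)/|P_0|$, and we need $\operatorname{Re}(U\bar P_0)\not\equiv0$ for $U\perp P_0$, $U\neq 0$. This is the analogue of the equipartition identity \eqref{eq:equal-mass}, and it follows from phase retrieval applied infinitesimally. If $\operatorname{Re}(U\bar P_0)=0$, then $|P_0+tU|^2=|P_0|^2+t^2|U|^2$. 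Comparing ambiguity functions of $P_0+tU$ and of $\sqrt{1+t^2}\,P_0$-type rotations forces $U=i s P_0$, and orthogonality then gives $U=0$.

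We expect the main obstacle to be this low and intermediate regime: making the linearized coercivity quantitative uniformly, and controlling the interaction zone where $|P_0|$ is small (near its zero set, which is now nontrivial for $k\ge1$) while $U$ is comparable to it. Our first approach would be to excise a small neighborhood of the zero set of $P_0$, whose $\gamma$-measure tends to $0$. There we bound the contribution of $U$ by $\|U\|_\infty$ on the finite-dimensional low space, and absorb this term into the left side exactly as the leakage term $\eta_5$ was absorbed in \eqref{eq:absorb}.
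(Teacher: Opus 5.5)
Your Steps 1 and 2 match the paper's reductions: normalization, the local reduction with density, and the orthogonal reduction with phase normalization. Note, though, that the proof of Theorem~\ref{thm:reduction} only ever applies the coercivity hypothesis to $g$ with $\lVert g\rVert\le\lVert h\rVert\le\delta$. So arbitrarily small $U$ is exactly the regime Step 3 must handle, and this is where your high part fails.

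Write $D(U)=\lvert P_0+U\rvert-\lvert P_0\rvert$, and let $\lVert\cdot\rVert_{\mathrm{far}}$ be the $\gamma_d$-norm over the far annuli. The bound $\lvert D(U)\rvert\ge\lvert U\rvert-2\lvert P_0\rvert$ together with Lemma~\ref{lem:safe-square} gives only $\lVert D(U)\rVert^2_{\mathrm{far}}\ge\tfrac12\lVert U\rVert^2_{\mathrm{far}}-4\lVert P_0\rVert^2_{\mathrm{far}}$. The subtracted term is a fixed positive number independent of $U$. Since the coercivity estimate is not homogeneous in $U$, that term cannot be absorbed once $\lVert U\rVert$ is small. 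Wherever $\lvert U\rvert\lesssim\lvert P_0\rvert$, the defect is to first order $\operatorname{Re}(U\overline{P_0})/\lvert P_0\rvert$, and you must show this captures a fixed fraction of $\lvert U\rvert$ in $L^2$.

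In the paper's model cases this is done by writing the defect on each sphere as $\lvert P_0\rvert\,\rho(U/P_0)$ and using the rotation average of Proposition~\ref{prop:rot-avg}, which holds at every amplitude $r\ge0$. For $P_0=\bar z$, dividing by $\Phi_0$ moves $U$ to positive frequencies, so the circle estimates apply verbatim. Proposition~\ref{prop:lifted-high} cannot simply be cited in your setting. It concerns $\rho(P)=\bigl\lvert\lvert1+P\rvert-1\bigr\rvert$ for band-limited holomorphic $P$, i.e.\ the defect after $P_0$ has been divided out. As you note yourself, $U/P_0$ is not of that form for multi-frequency $P_0$. Controlling that quotient also requires handling the zero set of $P_0$, which for $d\ge2$ can be unbounded: for $\vec\kappa=(1,0)$, $P_0=\Phi_{\vec\kappa,0}=\bar z_0$ vanishes on $\{z_0=0\}$. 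That set meets the far annuli where $U_{\mathrm{high}}$ lives, so your excision via an $L^\infty$ bound on a fixed finite-dimensional space does not reach it.

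The gluing of a compactness-based low part to the high part does not close either. Your infinitesimal step is correct once phrased as follows: $\operatorname{Re}(U\overline{P_0})\equiv0$ gives $\lvert P_0+U\rvert=\lvert P_0-U\rvert$, hence $U\in\mathbb{C}P_0$ by phase retrieval, hence $U=0$. With it, compactness yields $\lVert U_{\mathrm{low}}\rVert\le C_K\lVert D(U_{\mathrm{low}})\rVert$ with an uncontrolled constant $C_K$. But for $U=U_{\mathrm{low}}+U_{\mathrm{high}}$, the Lipschitz bound only gives $\lVert D(U_{\mathrm{low}})\rVert\le\lVert D(U)\rVert+\lVert U_{\mathrm{high}}\rVert$, and $C_K\lVert U_{\mathrm{high}}\rVert$ is not small.

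The absorption in Section~\ref{subsec:assembly} works for a specific reason. There is an estimate on every sphere of radius $r\in[j,j+1]$, valid for every window polynomial $V_j$. Its constant $B$ grows only polynomially in the window width $M$, while the leakage $\eta_M$ decays like $\mathrm{e}^{-M^2}$, so $(4B^2+2)\eta_M<1$. A compactness constant provides none of this:
\begin{itemize}
\item it is global rather than per-annulus;
\item components of degree just below and just above $K$ concentrate on the same annuli, so their interaction is of order one, not Gaussian-small;
\item widening the windows to shrink leakage changes the finite-dimensional space and hence the constant, which is circular.
\end{itemize}
Moreover, for multi-frequency $P_0$ the global condition $U\perp P_0$ does not descend to individual spheres. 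This contrasts with $U(0)=0$, which kills the zero mode on every circle. So even a per-sphere compactness statement would lack the normalization on which the circle estimates rest.

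What is missing is a quantitative per-sphere coercivity estimate for the normalized defect, uniform in amplitude and over all window polynomials. The paper builds on exactly this for $P_0=1$ and $P_0=\bar z$, and indicates that the general case follows a parallel, more technical line.

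A minor point: bounded point evaluation on $\mathcal{H}_{\vec\kappa}$ does not follow from domination of finitely many monomials, since its elements are infinite series. Use the isometric STFT identification instead.
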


In order to state this theorem in Lean, we rewrite it in several key ways. First, in the statement of the theorem, we must separate the hypothesis that $P_0$ is a function \lean{(Fin d -> ℂ) -> ℂ} from the statement that $P_0$ \emph{belongs} to \lean{TrueHermitePolys κ}. Second, in order to avoid the heavy set/inf notation, we prove the equivalent statement  
\[
\exists \  \theta \in \mathbb C\text{ such that } |\theta|=1 \text{ and } \lVert P_0 -\theta F\rVert_{\gamma_d}^2
\le
M^2\,\bigl\|\, \lvert P_0\rvert-\lvert F\rvert\,\bigr\|_{\gamma_d}^2.
\]
With these modifications, the Lean statement of the stability theorem reads as follows:

\begin{leancode}
theorem stable_phase_retrieval
    (hd : 0 < d) (κ : Fin d -> ℕ)
    {P : (Fin d -> ℂ) -> ℂ)} (hP : P ∈ TrueHermitePolys κ) :
    ∃ M : ℝ, 0 < M ∧
      ∀ Q ∈ TrueHermiteFunctions κ,
        ∃ θ : ℂ, ‖θ‖ = 1 ∧
          ∫ z, ‖P z - θ * Q z‖ ^ 2 ∂ γ
            ≤ M ^ 2 * ∫ z, (‖P z‖ - ‖Q z‖) ^ 2 ∂ γ := by
\end{leancode}
\begin{remark}
    The Lean statement utilizes $\theta$ instead of $\lambda$. The discrepancy in notation is due to the fact that, in Lean, the constant $\lambda$ is reserved notation.
\end{remark}
The reader may explore the statements (but not the proof) in an interactive Lean~4 environment on \href{https://live.lean-lang.org/\#codez=JYWwDg9gTgLgBAWQIYwBYBtgCMBQOJgCmAdnAM4DGBhAJnAELADmA8kVCtGXsRMVeACuMJFnSFyhCjGB8eSEITJgkFCQDEIFANYBlAAoAlAHQARUDX0R0ATzwB6ALSO4AFVQTeUEEkwAvWjg\%2BQkcANyQoYFFxOAAJQm9gGBDNHTgsJDJgMjhIW14QKPRjOEd7HBpCADM4hMLkq1s4AAptOFIALjhAVEIASha\%2FOC7AIEJ\%2BkaGAXhw4Fubmw0JfYzIAR1hZgDkUYyrVGGgi9qG4QFxCfoAqFq2YHb2D3zg2rrPesbhRwG8CAE6L6Zm4QCIRHAAFZwQAQRHB1MBiGRCDcOMQmBJmoVSKQ2gBqOAARl6ABo\%2Fv9Zo5scdRnAAHogi5Xba7aT3dAgsm\%2FIlE5rXYwUVAQCCwo6gkY0jnbbm8\%2FltQXvVlsmaDKnNUguYHCsgiKBwPz9BVtZW9PDhSLRCQAbzoXW6AF8HM43B44AADGgOsIRKJiCQZLI5QjiRTEG6lcqVGqAMuAWoAu4DggEbgY5Q0h0RwAPh6\%2FWagy68bgiZT5PGHSmM0A8ERwFZx6HZvG1RINaw2SOltOxlZpwYtm0udwSKrQpISeI1lJaNp5GwFQ4oETcwL7OBIOA94h9uBUao9ijAEjwCIcGwlMoVapuKCCQgD\%2BqERr15pRzMVnOpwktdTlmGCMBgZ\%2F30opvrS47NFmD7ksm7yTH8VSCKQgwTCm6jLIIIALlBMYrnAsFoZc4ZRrGfgdnaEiwvAEA1L4TKLsu559o4qQjnW46\%2BDkKBwOIoS\%2Bo6EYOvuwZHq4J5nnUfZXjkN6vtmP6pkMT66HCsxARJIG5mMhZwDJ8JIIiyJ8aeVG1k0Eb6jgTidvaUCLOgrroKedAAOJIIIZBZBp2YkFkMD1nwjryaBwxcUGh41Ew9mOVExCmK5STXhmkLfj5bwnOBMzNKS9hwAsSxgMAlLZjS6XFIQAAen7NDR0JEQhSHxuVUHAKEyGkGWd4JuhKaAGgEbatdlABMhnGQRuRQBANCCNIcB2Q5TmkIomSCGZQSkA63m5n5B4hnAgDNwMcCCLGQs2EF20B7ttM1zYBsVKYliA7XtB1QHuoTWIhhDGAA7kkqDhTCkX1ZqLVwAAohsGx5cYJF5S0QUTaFn1ufWWr4V2K7oHygTjSFzkOgAMgATQ6SOZDkSLEAkKCBFg9YUck1YXjRw65PRECFIx3EBceOmCck6hQdIsgwg2TXiaBf4dNJsnNMdu1mbde6Y5\%2BwxwF1LSbc05oTDlryXRQyOS6aC5wAAPnAgDARHA\%2BjgmzAmDsJcARlWNQTIACrgANrrQAuqbcDWkZtq6CIHq5KgmQSGZMCRIQ4TkdAY3BZNjo43jmD1DkJELr2VO6UOaSjgx6BkCzaCENAhBIWqxoAPpgIHsJlyHYcR0\%2BzSoOacAAAxwAAPDl\%2FMxc1Qv6uyZtdGdvcXXFLSoIPHsQtplsXtbBlSUSJsIMcJxVm3ncr4A5ERPjMgAARHAACK5szxnXP8DIfA5Lbu\%2F\%2FCbgAdwGSVatffnVq6SO\%2BykSgDURJqz9m4MFwj9LjHz8J1KkitABARBtW\%2BspAAmRIgbqcBLi\%2Fz8FWZorUAGdRcK1UBrVtQKzgNAzaBZ0h2BmGQaAd08AkDoLRAwJhzAgEsHWIAA}{this live.lean-lang.org link}. The service live.lean-lang.org is provided by the Lean Focused Research Organization.

\bibliographystyle{plain}
\bibliography{sources}
\end{document}